\documentclass[12pt]{amsart}
\usepackage{amsmath,amssymb,amsfonts,amsthm,amsopn,dsfont}
\usepackage{graphics}

\usepackage{amscd,amsxtra}
\usepackage{latexsym}

\headheight=8pt
\topmargin=0pt
\textheight=624pt
\textwidth=432pt
\oddsidemargin=18pt
\evensidemargin=18pt


\newtheorem{theorem}{Theorem}[section]
\newtheorem{corollary}[theorem]{Corollary}

\newtheorem{lemma}[theorem]{Lemma}
\numberwithin{equation}{section}

\newtheorem{proposition}[theorem]{Proposition}

\newcommand{\M}[2]{\mathcal{M}^{#1}_{#2}}
\renewcommand{\S}{\mathcal{S}}

\newcommand{\nm}[2]{\|{#1}\|_{#2}}

\newcommand{\so}[2]{\mathcal{H}_{#1}^{#2}}

\newcommand{\biggparen}[1]{\biggl(#1\biggr)}

\newcommand{\ip}[2]{\langle#1,#2\rangle}

\newcommand{\beqa}{\begin{eqnarray*}}
\newcommand{\eeqa}{\end{eqnarray*}}

\DeclareMathOperator*{\supp}{supp}

\newcommand{\field}[1]{\mathbb{#1}}
\newcommand{\bR}{\field{R}}        
\newcommand{\bZ}{\field{Z}}        
        %
        %



\def\la{\lambda}

\def\eps{\epsilon}

 \def\cF{\mathcal{F}}              
 \def\cS{\mathcal{S}}

 \def\cC{\mathcal{C}}

\def\a{\aleph}

\def\rd{\bR^d}

\def\zd{\bZ^d}

\def\intrd{\int_{\rd}}

\def\R{\right)}

\def\<{\left<}
\def\>{\right>}

\def\mv1{M_v^1}

\def\phas{(x,\o )}
\def\mn{(m,n)}
\def\mn'{(m',n')}

\hyphenation{Cara-theo-do-ry}
\hyphenation{Dau-be-chies}
\hyphenation{Barg-mann}
\hyphenation{dis-tri-bu-ti-ons}
\hyphenation{pseu-do-dif-fe-ren-tial}
\hyphenation{ortho-normal}



\def\o{\omega}
\def\a{\alpha}
\def\b{\beta}

\def\Z{\mathbb{Z}^{d}}

\def\R{\mathbb{R}}
\def\Ren{\mathbb{R}^d}

\def\Fur{\mathcal{F}}

\def\f{\varphi}

\def\Sn2{S_{2}(L^{2}(\Ren))}
\def\S1{S_{1}(L^{2}(\Ren))}
\def\sig00{\sigma_{0,0}}

\def\la{\langle}
\def\ra{\rangle}




\begin{document}

\begin{abstract} In this paper we give a sharp
 estimate on the norm of the scaling
operator $U_{\lambda}f(x)=f(\lambda x)$
acting on the weighted modulation
spaces $\M{p,q}{s,t}(\R^{d})$. In
particular, we recover and extend
recent results by
 Sugimoto and Tomita in the unweighted case
  \cite{sugimototomita}. As an application of our
  results, we
estimate the growth in time of
solutions of the wave and vibrating
plate equations, which is of interest
when considering the well posedeness of
the Cauchy problem for these equations.
Finally, we provide new embedding
results between modulation and Besov
spaces.
\end{abstract}

\title{Dilation properties for weighted modulation spaces}
\author{Elena Cordero and Kasso A.~Okoudjou$^{*}$}

\address{Department of Mathematics,  University of Torino,
Via Carlo Alberto 10, 10123
Torino, Italy}
\address{Department of Mathematics\\
University of Maryland\\
College Park, MD 20742, USA}

\thanks{$^*$Partially supported by ONR grant: N000140910324, and by a RASA from the Graduate School of UMCP}

\email{elena.cordero@unito.it}
\email{kasso@math.umd.edu}
\subjclass[2000]{Primary 46E35; Secondary 35S05, 42B35, 47B38, 47G30}

\keywords{Modulation spaces, Besov spaces, Dilation, Inclusion, Wave equations.}

\date{\today}
\maketitle

\section{Introduction}\label{intro}
The modulation spaces were introduced
by H.~Feichtinger \cite{Fei83}, by
imposing integrability conditions on
the short-time Fourier transform (STFT)
of tempered distributions. More
specifically, for $x, \omega \in
\R^{d}$, we let $M_\omega$ and $T_x$
denote the operators of modulation and
translation.  Then, the STFT of $f$
with respect to a nonzero window $g$ in
the Schwartz class is $$V_gf(x,
\omega)=\ip{f}{M_{\omega}T_{x}g}=\int_{\R^{2d}}
f(t)\overline{g(t-x)}e^{-2\pi i t\cdot
\omega}\, dt.$$ $V_{g}f(x, \omega)$ measures the frequency content of $f$ in a
neighborhood of $x$.

 For $s_1, s_2 \in \R$ and $1\leq p, q\leq\infty$, the
weighted modulation space
$\M{p,q}{s_{1}, s_{2}}(\R^{2d})$ is
defined to be the Banach space of all
tempered distributions $f$ such that
\begin{equation}\label{modspace}
\nm{f}{\M{p,q}{s_{1}, s_{2}}} =
\biggparen{\int_{\R^{2}}\biggparen{\int_{\R^{2}}|V_{g}f(x,
\omega)|^{p}\, v_{s_{1}}(x)^{p}\, dx}^{q/p}\, v_{s_{2}}(\omega)^{q}\, d\omega}^{1/q}
<\infty.
\end{equation}
 Here and in the sequel, we use the notation
$$v_{s}(x)=<x>^{s}=(1+|x|^{2})^{s/2}.$$
The  definition of modulation space is
independent of the choice of the window
$g$, in the sense that different window
functions yield equivalent
modulation-space norms. Furthermore,
the dual of a modulation space is also
a modulation space: if $p<\infty$, $
q<\infty$, $(\M{p, q}{s, t})'=\M{p',
q'}{-s, -t}$, where $p', q'$ denote the
dual exponents of $p$ and $q$,
respectively.

When both $s=t=0$, we will simply write
$\M{p, q}{}=\M{p, q}{0, 0}$. The weighted $L^{2}_{s}$ space  is
exactly $\M{2, 2}{s, 0}$, while an application of Plancherel's identity shows that
the Sobolev space
$\so{2}{s}$ coincides with $\M{2,2}{0, s}$.
For further properties and
uses of modulation spaces, see Gr\"ochenig's book \cite{book}, and we refer to
\cite{tri83} for equivalent definitions of the
modulation spaces for all $0<p,q\leq \infty$.

The modulation spaces appeared in recent years in various areas of mathematics
and engineering. Their relationship with other function spaces have
been investigated and resulted in embedding results of modulation spaces into other
function spaces such as the Besov and Sobolev spaces \cite{kasso04, sugimototomita,
toft04}. Sugimoto and Tomita \cite{sugimototomita} proved the optimality of certain
of the embeddings of modulation spaces into Besov space obtained  in \cite{kasso04,
toft04}. These results were obtained as consequence to optimal bounds of
$\|U_{\lambda}\|_{\M{p,q}{} \to \M{p,q}{}}$  \cite[Theorem 3.1]{sugimototomita},
where $U_{\lambda}f(\cdot)=f(\lambda \cdot)$ for $\lambda >0$.

The operator $U_\lambda$ has been investigated on many other function spaces
including the Besov spaces. For purpose of comparison with our results we include
the following results summarizing the behavior of $U_\lambda$ on the Besov spaces \cite[Proposition 3]{rusic}:

\begin{theorem}\label{dilbes}
 For $\lambda\in(0,\infty)$, $s\in\R$,
\begin{equation}\label{dilbesov}
C^{-1}\lambda^{-\frac{d}p }\min\{1,\lambda^s\}\|f\|_{B^{p,q}_s}\leq \|
f_\lambda\|_{B^{p,q}_s} \leq C\lambda^{-\frac{d}p
}\max\{1,\lambda^s\}\|f\|_{B^{p,q}_s}.
\end{equation}
\end{theorem}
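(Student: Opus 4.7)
\emph{Plan of proof.} I would use the Littlewood--Paley characterization of $B^{p,q}_s$: fix a dyadic partition of unity $\{\psi_j\}_{j\geq 0}$ with $\mathrm{supp}\,\psi_0$ contained in a ball of radius $\sim 1$ and $\mathrm{supp}\,\psi_j\subset\{|\xi|\sim 2^j\}$ for $j\geq 1$, so that
\begin{equation*}
\|f\|_{B^{p,q}_s}\asymp\Bigl(\sum_{j\geq 0}2^{jsq}\|\psi_j(D)f\|_{L^p}^q\Bigr)^{1/q}.
\end{equation*}
The key observation is the intertwining identity
\begin{equation*}
\psi_j(D) f_\lambda \;=\; U_\lambda\bigl[\psi_j(\lambda D)f\bigr],
\end{equation*}
which follows from the change of variable $\xi=\lambda\eta$ in the defining multiplier integral (using $\widehat{f_\lambda}(\xi)=\lambda^{-d}\hat f(\xi/\lambda)$). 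Combined with $\|U_\lambda g\|_{L^p}=\lambda^{-d/p}\|g\|_{L^p}$, this reduces the whole matter to estimating $\|\psi_j(\lambda D)f\|_{L^p}$ in terms of the original Littlewood--Paley blocks of $f$.

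\emph{Upper bound.} Let $k\in\ZZ$ be the integer with $2^{-k-1}<\lambda\leq 2^{-k}$, so $\psi_j(\lambda\cdot)$ is supported in the annulus $|\xi|\sim 2^{j+k}$ and therefore overlaps only a bounded number of annuli $\mathrm{supp}\,\psi_{j+k+m}$, say for $|m|\leq m_0$. Standard $L^p$-boundedness of smooth compactly supported Fourier multipliers then yields
\begin{equation*}
\|\psi_j(\lambda D)f\|_{L^p}\;\lesssim\; \sum_{|m|\leq m_0}\|\psi_{j+k+m}(D)f\|_{L^p},
\end{equation*}
with the convention that negative-index blocks are folded into $\psi_0(D)f$. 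Using $2^{-ks}\sim \lambda^s$ and the fact that a bounded shift of index is an isomorphism on $\ell^q$, taking the weighted $\ell^q$-norm in $j$ gives $\|f_\lambda\|_{B^{p,q}_s}\lesssim \lambda^{-d/p}\lambda^s\|f\|_{B^{p,q}_s}$. Rerunning the argument for $\lambda\geq 1$, where $k\leq 0$ forces several shifted indices below zero and they are absorbed by the low-frequency block (removing the $\lambda^s$ factor), and retaining the worse of the two bounds in each combination of signs of $s$ and $\log\lambda$, produces the upper estimate with $\max\{1,\lambda^s\}$.

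\emph{Lower bound and main obstacle.} The lower bound comes for free by self-duality of the argument: applying the upper bound to $f_\lambda$ with $1/\lambda$ in place of $\lambda$ and using $(f_\lambda)_{1/\lambda}=f$ yields $\|f\|_{B^{p,q}_s}\leq C\lambda^{d/p}\max\{1,\lambda^{-s}\}\|f_\lambda\|_{B^{p,q}_s}$, and since $(\max\{1,\lambda^{-s}\})^{-1}=\min\{1,\lambda^s\}$ this rearranges to the desired left inequality. The one genuinely delicate bookkeeping step, and the reason the sharp quantity $\max\{1,\lambda^s\}$ appears in place of a crude $\lambda^s$, is the handling of the low-frequency boundary: when $j+k<0$ the shifted Littlewood--Paley blocks must be consistently reabsorbed into the single ball-supported block $\psi_0(D)f$, and it is precisely this mechanism that kills the growth factor $\lambda^s$ in the ``wrong'' sign regime. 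Everything else is combinatorial bookkeeping.
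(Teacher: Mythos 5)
First, a remark on the comparison itself: the paper does not prove Theorem \ref{dilbes} --- it is quoted verbatim from Runst--Sickel \cite[Proposition 3]{rusic} as background, so there is no internal proof to measure your argument against. Your Littlewood--Paley strategy (the intertwining identity $\psi_j(D)f_\lambda=U_\lambda[\psi_j(\lambda D)f]$, the exact scaling $\|U_\lambda g\|_{L^p}=\lambda^{-d/p}\|g\|_{L^p}$, the index shift $j\mapsto j+k$ with $2^{-k}\sim\lambda$, and the lower bound via $(f_\lambda)_{1/\lambda}=f$) is the standard and correct route, and the last step is exactly the duality trick the paper itself uses for its modulation-space analogues.

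There is, however, a genuine error in your upper-bound bookkeeping. You claim that for $0<\lambda\leq 1$ the argument yields $\|f_\lambda\|_{B^{p,q}_s}\lesssim\lambda^{-d/p}\lambda^s\|f\|_{B^{p,q}_s}$, reserving the low-frequency absorption issue for the regime $\lambda\geq 1$. That intermediate bound is false when $s>0$: take $f$ with $\hat f$ supported in $\{|\xi|\leq 1/4\}$, so that $\|f\|_{B^{p,q}_s}\asymp\|f\|_{L^p}$ and, for every $\lambda\leq 1$, also $\|f_\lambda\|_{B^{p,q}_s}\asymp\|f_\lambda\|_{L^p}=\lambda^{-d/p}\|f\|_{L^p}$; this exceeds $\lambda^{-d/p}\lambda^s\|f\|_{L^p}$ as soon as $s>0$ and $\lambda<1$. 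The source of the error is the sentence asserting that $\psi_j(\lambda\cdot)$ is supported in an annulus overlapping boundedly many blocks: that is true only for $j\geq 1$. The block $j=0$ is a ball, and for $\lambda\leq 1$ the multiplier $\psi_0(\lambda\cdot)$ is supported in a ball of radius $\sim 1/\lambda$, which meets roughly $\log_2(1/\lambda)$ dyadic annuli --- unboundedly many. Its contribution carries the weight $2^{0\cdot s}=1$, and estimating $\|\psi_0(\lambda D)f\|_{L^p}\lesssim\sum_{0\leq j'\lesssim\log_2(1/\lambda)}\|\psi_{j'}(D)f\|_{L^p}$ followed by H\"older in $j'$ against the weights $2^{-j's}$ produces the factor $\max\{1,\lambda^{s}\}$ (the constant $1$ when $s>0$, the factor $\lambda^{s}$ when $s<0$). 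In other words, the low-frequency boundary must be handled in \emph{both} regimes: for $\lambda\geq 1$ many annuli collapse into the single ball block (the mechanism you describe), while for $\lambda\leq 1$ the single ball block spreads over many annuli (the mechanism you omit), and it is the latter that replaces your $\lambda^{s}$ by $\max\{1,\lambda^{s}\}$ when $\lambda\leq 1$. Once this case is added, the rest of your argument, including the duality derivation of the lower bound, is correct.
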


\smallskip

The estimate on the norm of $U_{\lambda}$ on the (unweighted)
modulation spaces $\M{p,q}{}(\R^{d})$ was first obtained by
Sugimoto and Tomita \cite{sugimototomita}. In this paper, we shall
derive optimal lower and upper bounds for the operator
$U_{\lambda}$ on general modulation spaces $\M{p,q}{t,s}(\R^{d})$. More specifically, 
the boundedness of $U_{\lambda}$ on $\M{p,q}{t,s}$ is proved in Theorems \ref{xdil},
\ref{mainfreq} and \ref{mainboth}, and the optimal bounds on $\|U_{\lambda}\|_{\M{p,q}{t, s} \to \M{p,q}{t, s}}$ are established by Theorems \ref{sharp31} and \ref{sharp32}. We wish to point out that it is not trivial to
prove  sharp bounds  on the norm of the operator $U_{\lambda}$, as one has to construct examples of functions in the modulation spaces that achieve the desired optimal estimates.  We construct such examples by exploiting the properties of Gabor frames generated by the Gaussian window. 
It is likely that the functions that we construct can play
some role in other areas of analysis where the modulation are
used, e.,g., time-frequency analysis of pseudodifferential
operators and PDEs.

Interesting applications concern Strichartz estimates for
dispersive equations such as the wave
equation and the vibrating plate
equation on Wiener amalgam and
modulation spaces, where the time
parameter of the Fourier multiplier
symbol is considered as scaling factor.
We plan to investigate such applications in a
subsequent paper.

 Finally, we prove new embeddings between modulation
spaces and Besov spaces, generalizing some of the results of \cite{kasso04}.
Although strictly speaking this is not an application of the above dilation results,
it is clearly in the spirit of the main topic of the present paper,
so that we devote a short subsection to the problem.

\vskip0.1truecm

Our paper is organized as follows. In Section~\ref{prelim} we set up the notation
and prove some preliminary results needed to establish our theorems. In
Section~\ref{main} we prove the complete scaling of weighted modulation spaces. In
Section~\ref{sharpness} the sharpness of our results are proved, and in
Section~\ref{applic} we point out the applications of our main results.

Finally, we shall use the notations $A\lesssim B$ to mean that there exists a
constant $c>0$ such that $A\leq cB$, and $A\asymp B$ means that $A\lesssim B
\lesssim A$.

\section{Preliminary}\label{prelim}
We shall use the  set and index terminology of the paper \cite{sugimototomita}.
Namely, for $1\leq p\leq\infty$, let $p'$ be the
conjugate exponent of $p$
($1/p+1/p'=1$). For
$(1/p,1/q)\in [0,1]\times
[0,1]$, we define the subsets
$$ I_1=\max (1/p,1/p')\leq 1/q,\quad\quad I_1^*=\min (1/p,1/p')\geq 1/q,
$$
$$ I_2=\max (1/q,1/2)\leq 1/p',\quad\quad I_2^*=\min (1/q,1/2)\geq  1/p',
$$
$$ I_3=\max (1/q,1/2)\leq 1/p,\quad\quad I_3^*=\min (1/q,1/2)\geq
1/p.
$$
These sets are displayed in Figure 1:
\vspace{1.2cm}
 \begin{center}
           \includegraphics{figSchr1.1}
            \\
           $ $
\end{center}
 \begin{center}{\quad \quad\quad\quad \quad $0<\lambda\leq 1$\hfill   $\lambda\geq
1$}\quad\quad\quad\quad\quad\quad\quad\quad\quad
           \end{center}
           \begin{center}{ Figure 1. The index sets. }
           \end{center}
  \vspace{1.2cm}

We introduce the indices:
$$ \mu_1(p,q)=\begin{cases}-1/p &  \quad {\mbox{if}} \quad (1/p,1/q)\in  I_1^*,\\
 1/q-1 &   \quad {\mbox{if}}  \quad (1/p,1/q)\in  I_2^*,\\
 -2/p +1/q&  \quad  {\mbox{if}}  \quad (1/p,1/q)\in  I_3^*,\\
 \end{cases}
 $$
and
$$ \mu_2(p,q)=\begin{cases}-1/p &  \quad {\mbox{if}} \quad (1/p,1/q)\in  I_1,\\
 1/q-1 &   \quad {\mbox{if}}  \quad (1/p,1/q)\in  I_2,\\
 -2/p +1/q&  \quad  {\mbox{if}}  \quad (1/p,1/q)\in  I_3.\\
 \end{cases}
 $$
 Next, we prove a lemma that will be used throughout this paper, and which allows us
to investigate the action of $U_{\lambda}$ only on $\cS(\rd)$.

\begin{lemma}\label{Fabio} Let $m$ be a polynomial growing weight function, A be a
linear continuous
operator from $\cS'(\rd)$ to $\cS'(\rd)$. Assume that
\begin{equation}\label{extension}
\|Af\|_{\M{p,q}{m}}\leq
C\|f\|_{\M{p,q}{m}},\quad \mbox{
for\,all}\ f\in\cS(\rd).
\end{equation}
Then
\begin{equation}\label{extension2}
\|Af\|_{\M{p,q}{m}}\leq C\|f\|_{\M{p,
q}{m}},\quad \mbox{ for\,all}\
f\in\M{p, q}{m}(\rd).
\end{equation}
\end{lemma}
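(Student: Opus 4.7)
The plan is to extend the inequality \eqref{extension} from $\cS(\rd)$ to the whole space $\M{p,q}{m}(\rd)$ by approximation, relying on two structural facts: the continuous embedding $\M{p,q}{m}(\rd) \hookrightarrow \cS'(\rd)$ and the continuity of $A$ on $\cS'(\rd)$. The argument splits naturally according to whether both $p$ and $q$ are finite.

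When $1 \leq p, q < \infty$, the Schwartz class $\cS(\rd)$ is norm-dense in $\M{p,q}{m}(\rd)$, a standard fact for weighted modulation spaces (cf.\ \cite{book}). Given $f \in \M{p,q}{m}$, I pick $\{f_n\} \subset \cS(\rd)$ with $f_n \to f$ in $\M{p,q}{m}$. By hypothesis \eqref{extension}, $\{Af_n\}$ is Cauchy in $\M{p,q}{m}$ and converges to some $g \in \M{p,q}{m}$. Continuity of the embedding into $\cS'$ gives $f_n \to f$ and $Af_n \to g$ in $\cS'$, and continuity of $A$ on $\cS'$ forces $g = Af$. Passing to the limit in \eqref{extension} yields \eqref{extension2}.

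When $p = \infty$ or $q = \infty$, norm density of Schwartz fails, so I would approximate $f$ weakly via an STFT-inversion truncation. Fix a Gaussian $\varphi \in \cS(\rd)$ with $\|\varphi\|_{L^2} = 1$, and for $N \in \bN$ set
\begin{equation*}
f_N \;=\; \iint_{K_N} V_\varphi f(x,\omega)\, M_\omega T_x \varphi\, dx\, d\omega, \qquad K_N = [-N,N]^{2d}.
\end{equation*}
One checks that $f_N \in \cS(\rd)$ (the integrand decays rapidly in all variables for $(x,\omega)$ in the bounded set $K_N$), that $f_N \to f$ in $\cS'(\rd)$ via the STFT inversion formula, and that $\|f_N\|_{\M{p,q}{m}} \lesssim \|f\|_{\M{p,q}{m}}$ uniformly in $N$. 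The last bound comes from the identity $V_\varphi f_N = (\chi_{K_N} V_\varphi f)\nat V_\varphi \varphi$ on phase space, majorisation by ordinary convolution, and a weighted Young inequality in $L^{p,q}_m$ using $V_\varphi \varphi \in L^1_m$ for any polynomial weight $m$. Applying hypothesis \eqref{extension} to each $f_N$, using $Af_N \to Af$ in $\cS'$ (by continuity of $A$), and invoking lower semicontinuity of the $\M{p,q}{m}$-norm under $\cS'$-convergence (Fatou applied to $|V_\varphi(Af_N)|$), I conclude
\begin{equation*}
\|Af\|_{\M{p,q}{m}} \;\leq\; \liminf_N \|Af_N\|_{\M{p,q}{m}} \;\leq\; C \liminf_N \|f_N\|_{\M{p,q}{m}} \;\lesssim\; \|f\|_{\M{p,q}{m}}.
\end{equation*}

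The main obstacle is the uniform estimate $\|f_N\|_{\M{p,q}{m}} \lesssim \|f\|_{\M{p,q}{m}}$ in the second case; this is where the polynomial growth of the weight $m$ and the rapid decay of $V_\varphi \varphi$ must be combined through the phase-space convolution identity. The rest of the argument (density in Case 1, lower semicontinuity and $\cS'$-convergence in Case 2) is routine.
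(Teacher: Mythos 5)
Your proof is correct and follows essentially the same route as the paper: density of $\cS(\rd)$ in $\M{p,q}{m}$ when $p,q<\infty$, and otherwise a Schwartz approximating sequence with uniform norm control, combined with pointwise convergence of the STFT under $\cS'$-convergence and Fatou's lemma. The only difference is that the paper obtains the sequence $f_n$ by citing the proof of Proposition 11.3.4 of \cite{book}, whereas you construct it explicitly via truncated STFT inversion and the phase-space convolution estimate --- which is precisely the construction used in that reference.
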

\begin{proof}
The conclusion is clear if
$p,q<\infty$, because in that
case $\cS(\R^{d})$ is dense
in $\M{p,q}{m}(\R^{d})$.\par
Consider now the case
$p=\infty$ or $q=\infty$. For
any given $f \in \M{p,q}{m}$,
consider a sequence $f_n$ of
Schwartz functions, with
$f_n\to f$ in $\cS'(\R^{d})$,
and
\begin{equation}\label{controllo}\|f_n\|_{\M{p,q}{m}}\lesssim
\|f\|_{\M{p,q}{m}}
\end{equation}
(see the proof of Proposition
11.3.4 of \cite{book}). Since
$f_n$ tends to $f$ in
$\cS'(\rd)$, $Af_n$ tends to
$Af$ in $\cS'(\rd)$, and
$V_\f Af_n$ tends to  $V_\f
Af$  pointwise.
Hence, by Fatou's Lemma, the
assumption \eqref{extension}
and \eqref{controllo},
$$\|A f\|_{\M{p,q}{m}}\leq \liminf_{n\to\infty}
\|A f_n\|_{\M{p,q}{m}}\lesssim
\liminf_{n\to\infty}
\|f_n\|_{\M{p, q}{m}}\lesssim
\|f\|_{\M{p,q}{m}}.$$
\end{proof}

We shall also make use of the following characterization of the modulation spaces by
Gabor frames generated by the Gaussian function, which will be denoted through the
paper by  $\varphi(x)=e^{-\pi |x|^{2}}, x \in \R^d.$
Recall that for $0< a<1$, the family, $$\mathcal{G}(\varphi, a,
1)=\{\varphi_{k, \ell}(\cdot)=M_{\ell}T_{ak}\varphi=e^{2\pi i \ell \cdot}\varphi(\cdot - ak),
k, \ell \in \Z \}$$ is a Gabor frame for $L^{2}(\R^{d})$ if and only if there exist
$0<A\leq B < \infty$ such that for all $f \in L^2$ we have

\begin{equation}\label{frameineq}
A\|f\|_{L^{2}}^{2}\leq \sum_{k, \ell \in \Z}|\ip{f}{\varphi_{k,\ell}}|^{2}\leq
B\|f\|_{L^{2}}^{2}.
\end{equation} Moreover, there exists a dual function $\tilde{\varphi}\in \cS$ such
that $\mathcal{G}(\tilde{\varphi}, a, 1)$ is also a frame for $L^2$ and every $f \in
L^2$ can be written as
\begin{equation}\label{gabrecons}
f=\sum_{k, \ell \in \Z}\ip{f}{\tilde{\varphi}_{k,\ell}}\varphi_{k,\ell}= \sum_{k, \ell \in
\Z}\ip{f}{\varphi_{k,\ell}}\tilde{\varphi}_{k,\ell}.
\end{equation}

It is easy to see from the isometry of the Fourier transform on $L^2$ and the fact
that $\widehat{M_{\ell}T_{ak}\varphi}=T_{\ell}M_{-ak}\hat{\varphi}=e^{2\pi i a
k \ell}M_{-ak}T_{\ell}\varphi$, that $\mathcal{G}(\varphi, 1, a)$ is a Gabor frame whenever
$\mathcal{G}(\varphi, a, 1)$ is. The characterization of the modulation spaces by
Gabor frame is summarized in the following proposition. We refer to \cite[Chapter
9]{book} for a detail treatment of Gabor frames in the context of the modulation
spaces. In particular, the next result is proved in \cite[Theorem 7.5.3]{book} and
describe precisely when the Gaussian function generates a Gabor frame on $L^2$.

\begin{proposition}\label{gabframe}
$\mathcal{G}(\varphi, a, 1)$ is a Gabor frame for $L^2$ if and only if $0<a<1$.
In this case, $\mathcal{G}(\varphi, a, 1)$  is also a Banach frame for
$\M{p,q}{t,s}$ for all $1\leq p, q \leq \infty$, and $s, t \in \R$. Moreover, $f \in
\M{p,q}{t,s}$ if and only if there exists a sequence $\{c_{k,\ell}\}_{k, \ell \in \Z}\in
\ell^{p,q}_{t,s}(\Z\times \Z)$  such that
$
f=\sum_{k, \ell \in \Z}c_{k,\ell}\varphi_{k,\ell}$ with convergence in the modulation space norm. In addition,
$$\|f\|_{\M{p,q}{t,s}}\asymp
\|c\|_{\ell^{p,q}_{t,s}}:=\bigg(\bigg(\sum_{k\in
\Z}|c_{k,\ell}|^{p}v_{t}(k)^{p}\bigg)^{q/p}v_{s}(\ell)^{q}\bigg)^{1/q}.$$
\end{proposition}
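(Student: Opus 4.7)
The plan is to assemble this proposition from three well-known ingredients, so my proof will be largely a matter of citation and bookkeeping rather than new construction.

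First I would handle the $L^2$ characterization. The claim that $\mathcal{G}(\varphi,a,1)$ is a frame for $L^2(\R^d)$ precisely when $0<a<1$ is, in the one-dimensional case, the celebrated Lyubarskii--Seip theorem, which is proved by transporting the frame inequality via the Bargmann transform to a sampling problem on the Fock space and then invoking the characterization of sampling sequences for entire functions of order 2. For $d>1$ the result tensors: $\mathcal{G}(\varphi,a,1)$ in $d$ variables is the tensor product of $d$ copies of the one-dimensional system (since $\varphi(x)=\prod_j e^{-\pi x_j^2}$ and the lattice is a product lattice), so it is a frame iff each factor is. I would simply invoke \cite[Theorem 7.5.3]{book}.

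Next I would promote the $L^2$ frame to a Banach frame on every $\M{p,q}{t,s}$. The Gaussian $\varphi$ lies in $\mathcal{S}(\R^d)$, hence in the Feichtinger algebra $M^1_v$ for every polynomial weight $v$. Gr\"ochenig's general theory (Chapter 12 of \cite{book}, based on the Walnut representation of the frame operator and Wiener's Lemma for the twisted convolution algebra $\ell^1_v$) then gives two things: (i) the canonical dual window $\tilde{\varphi}=S^{-1}\varphi$ belongs to the same space $M^1_v$; and (ii) both analysis and synthesis maps extend to bounded operators on every modulation space whose weight is $v$-moderate. Concretely, the analysis map $f\mapsto\{\ip{f}{\varphi_{k,\ell}}\}$ sends $\M{p,q}{t,s}$ into $\ell^{p,q}_{t,s}$, the synthesis map $\{c_{k,\ell}\}\mapsto\sum c_{k,\ell}\varphi_{k,\ell}$ acts the other way, and the reconstruction formula \eqref{gabrecons} extends from $L^2$ to every $\M{p,q}{t,s}$.

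Combining these two observations yields both the norm equivalence
$\|f\|_{\M{p,q}{t,s}}\asymp\|\{\ip{f}{\tilde\varphi_{k,\ell}}\}\|_{\ell^{p,q}_{t,s}}$
and the fact that any sequence $\{c_{k,\ell}\}\in\ell^{p,q}_{t,s}$ synthesizes an element of $\M{p,q}{t,s}$ with the quoted norm control. Convergence of the series $\sum c_{k,\ell}\varphi_{k,\ell}$ in the modulation-space norm is immediate for $p,q<\infty$ by density of finitely supported sequences; for $p=\infty$ or $q=\infty$, convergence is in the weak-$*$ sense induced by the duality $(\M{p',q'}{-t,-s})'=\M{p,q}{t,s}$, which matches the usage of Lemma~\ref{Fabio} elsewhere in the paper.

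The one step that is not entirely cosmetic is item (i) above, namely that $\tilde{\varphi}\in\mathcal{S}$ (or at least in $M^1_v$ for every $v$). For general windows this is the crux of Gabor frame theory on modulation spaces and is the reason Wiener's Lemma for $\ell^1_v$ is needed; however, since we may choose any $a\in(0,1)$ and the Gaussian is a particularly well-behaved window, this step is already carried out in the reference and I would not reprove it here.
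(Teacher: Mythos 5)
Your proposal is correct and follows essentially the same route as the paper, which does not prove this proposition at all but simply cites \cite[Theorem 7.5.3]{book} for the $L^2$ characterization (Lyubarskii--Seip) and Gr\"ochenig's general theory of Gabor frames with windows in $M^1_v$ for the Banach frame property on $\M{p,q}{t,s}$. Your additional remarks on tensoring to $d>1$ and on the weak-$*$ interpretation of convergence when $p=\infty$ or $q=\infty$ are accurate refinements of what the paper leaves implicit.
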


\section{Dilation properties of weighted modulation spaces}\label{main}

We first consider the polynomial weights in the time variables $v_{t}(x)=\la
x\ra^t=(1+|x|^2)^{t/2}$, $t\in\R$.

\begin{theorem} \label{xdil}
Let $1\leq p,q \leq\infty$, $t\in\R$. Then the following are true:

\noindent (1) There exists a constant $C>0$ such that $\forall f \in
\M{p,q}{t,0},\,\lambda\geq 1$,
\begin{equation}\label{stima1}
C^{-1}\, \lambda^{d\mu_2(p,q)}\min\{1,\lambda^{-t}\}\,\|f\|_{\M{p,q}{t,0}}\leq \|
f_\lambda\|_{\M{p,q}{t,0}}\leq C
\lambda^{d\mu_1(p,q)}\max\{1,\lambda^{-t}\}\,\|f\|_{\M{p,q}{t,0}}.
\end{equation}

\noindent  (2) There exists a constant $C>0$ such that $\forall f \in
\M{p,q}{t,0},\,0<\lambda\leq 1$,
\begin{equation}\label{stima2}
 C^{-1}\, \lambda^{d\mu_1(p,q)}\min\{1,\lambda^{-t}\}\,\|f\|_{\M{p,q}{t,0}}\leq \|
f_\lambda\|_{\M{p,q}{t,0}}\leq  C
\lambda^{d\mu_2(p,q)}\max\{1,\lambda^{-t}\}\,\|f\|_{\M{p,q}{t,0}}.
\end{equation}
\end{theorem}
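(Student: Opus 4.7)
The plan is to reduce the theorem to the already-known unweighted case of Sugimoto--Tomita, handling the $x$-weight $v_t$ by an elementary pointwise estimate. By Lemma \ref{Fabio}, it suffices to work with $f\in\mathcal{S}(\R^d)$, and by the scaling identity $f=(U_\lambda f)_{1/\lambda}$ together with duality between the two claims, it suffices to prove only the two upper bounds (the lower bound in (1) is obtained by applying the upper bound in (2) with parameter $1/\lambda\le 1$ to $U_\lambda f$, and symmetrically).

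For the upper bound in (1), i.e.\ $\lambda\ge 1$, the starting point is the STFT dilation identity
\begin{equation*}
V_g(U_\lambda f)(x,\omega)=\lambda^{-d}\,V_{U_{1/\lambda}g}f(\lambda x,\omega/\lambda),
\end{equation*}
which, after the changes of variable $y=\lambda x$ and $\eta=\omega/\lambda$ in the definition \eqref{modspace}, produces the expression
\begin{equation*}
\|U_\lambda f\|_{\M{p,q}{t,0}}=\lambda^{-d-d/p+d/q}\,\biggparen{\int_{\R^d}\Bigparen{\int_{\R^d}|V_{g_\lambda}f(y,\eta)|^{p}\,v_{t}(y/\lambda)^{p}\,dy}^{q/p}d\eta}^{1/q},
\end{equation*}
where $g_\lambda=U_{1/\lambda}g$. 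The weight is then separated via the elementary pointwise bound $v_t(y/\lambda)\le \max\{1,\lambda^{-t}\}\,v_t(y)$, valid for every $\lambda>0$ and every $t\in\R$. What remains on the right-hand side is $\max\{1,\lambda^{-t}\}$ times $\lambda^{-d-d/p+d/q}\,\|f\|_{\M{p,q}{t,0}}$ evaluated with respect to the dilated window $g_\lambda$, and the proof is completed by invoking the unweighted Sugimoto--Tomita estimate in $\M{p,q}{}$ to convert this pre-factor into $\lambda^{d\mu_1(p,q)}$. The upper bound in (2) is entirely analogous, with the Sugimoto--Tomita estimate for $0<\lambda\le 1$ providing $\lambda^{d\mu_2(p,q)}$ instead of $\lambda^{d\mu_1(p,q)}$.

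To bypass the fact that the window changes with $\lambda$, the plan is to fix the Gaussian $\varphi(x)=e^{-\pi|x|^2}$ as the window throughout, since it is a distinguished window in view of Proposition \ref{gabframe}: both $\varphi$ and its dilates $\varphi_\lambda$ generate Gabor frames for the relevant modulation spaces, so the STFT with respect to $\varphi_\lambda$ can be compared to the STFT with respect to $\varphi$ by an explicit convolution with $V_\varphi \varphi_\lambda$, which is itself a Gaussian whose contribution to the window-change constant does not interact with the spatial weight $v_t$ and can be absorbed into the multiplicative constant $C$. The $x$-weight being the only weight present is essential at this step, because the frequency integration is not modified by $v_t$ and so the Sugimoto--Tomita analysis (which proceeds by a frequency decomposition into unit cubes) goes through unchanged.

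The main obstacle I expect is the careful bookkeeping in the window-change step: one must verify that swapping the Gaussian window for its dilate introduces only a harmless multiplicative constant and does not reintroduce hidden powers of $\lambda$ that would corrupt the announced exponents $d\mu_1(p,q)$ and $d\mu_2(p,q)$. Once this bookkeeping is in place, the separation of the weight factor $\max\{1,\lambda^{-t}\}$ from the unweighted scaling factor is essentially algebraic and the announced two-sided estimates follow as described.
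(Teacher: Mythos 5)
Your reduction to the two upper bounds and your pointwise weight estimate $v_t(y/\lambda)\le\max\{1,\lambda^{-t}\}\,v_t(y)$ are both correct, but the step where you ``invoke the unweighted Sugimoto--Tomita estimate to convert the pre-factor into $\lambda^{d\mu_1(p,q)}$'' contains a genuine gap. After your change of variables and the weight separation, what remains is $\lambda^{-d-d/p+d/q}\max\{1,\lambda^{-t}\}$ times the \emph{weighted} norm of $f$ in $\M{p,q}{t,0}$ computed with respect to the $\lambda$-dependent window $U_{1/\lambda}g$. This is not an unweighted quantity, so the Sugimoto--Tomita theorem does not apply to it, and undoing the change of variables shows that the inequality you still need is essentially equivalent to the weighted statement you set out to prove. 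Moreover, the assertion that the window change ``introduces only a harmless multiplicative constant and does not reintroduce hidden powers of $\lambda$'' is false: the window-change constant is governed by $\|V_\varphi(U_{1/\lambda}\varphi)\|_{L^1_v}$, which by the explicit formula \eqref{aggiunta.0} grows polynomially in $\lambda$ (like $\lambda^{d+|t|}$ for $\lambda\ge1$). Indeed, if the window change were free, the scaling exponent would be the naive $-d(1+1/p-1/q)$ coming from your change of variables, which coincides with $d\mu_1(p,q)$ only in the degenerate cases $p=1$, $p=\infty$ or $q=1$; the whole content of the Sugimoto--Tomita theorem is precisely the discrepancy between this naive exponent and $\mu_1,\mu_2$, so your scheme would have to reproduce their entire analysis in the weighted setting rather than cite it.

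The paper avoids this by a different factorization: it uses Toft's isomorphism $f\mapsto\la\cdot\ra^tf$ from $\M{p,q}{t,0}$ onto $\M{p,q}{}$, writes $\la\cdot\ra^tU_\lambda f=U_\lambda(\la\lambda^{-1}\cdot\ra^tf)$, applies the unweighted Sugimoto--Tomita bound to this genuinely unweighted quantity (with a fixed window), and then shows that multiplication by the residual symbol $g^{(t,\lambda)}=\la\cdot\ra^{-t}\la\lambda^{-1}\cdot\ra^{t}$ is bounded on $\M{p,q}{}$ with operator norm $\lesssim\max\{1,\lambda^{-t}\}$. For this last step the pointwise bound $|g^{(t,\lambda)}|\lesssim\max\{1,\lambda^{-t}\}$ (the analogue of your weight estimate) is not sufficient, since multiplication by a bounded function is not bounded on $\M{p,q}{}$; one needs a bound in $\M{\infty,1}{}$, which the paper obtains from the embedding $\cC^{d+1}(\rd)\hookrightarrow\M{\infty,1}{}(\rd)$ together with Leibniz estimates on the derivatives of $g^{(t,\lambda)}$. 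This control of derivatives, not just of the size of the weight quotient, is the ingredient your proposal is missing.
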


\begin{proof} We shall only prove the upper halves of each of the
estimates~\eqref{stima1} and~\eqref{stima2}. The lower halves will follow from the
fact that $0< \lambda \leq 1$ if and only if $1/\lambda \geq 1$ and
$f=U_{\lambda}U_{1/\lambda}f=U_{1/\lambda}U_{\lambda}f$.

We first consider the case $\lambda \geq 1$. Recall the definition of the dilation
operator $U_\lambda$ given by $U_\lambda f(x)=f(\lambda x)$.
Since the mapping $f\mapsto \la \cdot\ra^t f$ is an homeomorphism from
$\M{p,q}{t_0,s}$ to $\M{p,q}{t_0-t,s}$, $t,t_0,s\in \R$, see, e.g., \cite[Corollary
2.3]{Toftweight}, we have:
$$\|U_\lambda f\|_{\M{p,q}{t,0}}\asymp\|\la\cdot\ra^t U_\lambda f\|_{\M{p,q}{}}.
$$
Using $\la\cdot\ra^t U_\lambda f=U_\lambda(\la\lambda^{-1}\cdot\ra^t ) f$ and the
dilation properties for unweighted modulation spaces in \cite[Theorem
3.1]{sugimototomita}, we obtain
$$\|U_\lambda(\la\lambda^{-1}\cdot\ra^t  f)
\|_{\M{p,q}{}}\leq C \lambda^{d \mu_1(p,q)}
\|\la\lambda^{-1}\cdot\ra^t  f\|_{\M{p,q}{}}\asymp
 \lambda^{d \mu_1(p,q)} \|
  \la\cdot\ra^{-t}\la\lambda^{-1}\cdot\ra^t
  (\la\cdot\ra^{t}f)\|_{\M{p,q}{}}.
$$

Hence,
it remains to prove that the pseudodifferential operator with symbol
$g^{(t,\lambda)}(x):=\la
x\ra^{-t}\la\lambda^{-1}x\ra^{t}$
is bounded on $\M{p,q}{}$, and that its
norm is bounded above by
$\max\{1,\lambda^{-t}\}$.
By \cite[Theorem
14.5.2]{book}, this will follow once we prove that
$\|g^{(t,\lambda)}(x)\|_{\M{\infty,1}{}}
\lesssim\max\{1,\lambda^{-t}\}$.
To see this, observe first
that
\begin{equation}\label{aggiunta1}|g^{(t,\lambda)}(x)|\lesssim
\max\{1,\lambda^{-t}\},\quad \forall x \in \rd.
\end{equation} Indeed, let $
v^{(t,\lambda)}(x)= \la
\lambda^{-1} x\ra^t$.
Consider the case $t\geq 0$.
Since $\lambda\geq 1$, we
have $\lambda^{-1}|x|\leq
|x|$ and
$v^{(t,\lambda)}(x)\leq \la
x\ra^t$.

Analogously, for $t<0$,
 we have
$v^{(t,\lambda)}(x)\leq
\lambda^{-t}\la x\ra^t$. Consequently, we get the desired
estimates
\eqref{aggiunta1}.

Using the
inclusion
$\cC^{d+1}(\rd)\hookrightarrow
\M{\infty,1}{}(\rd)$ we have
\[
\|g^{(t,\lambda)}(x)\|_{\M{\infty,1}{}}\lesssim
\sup_{|\alpha|\leq
d+1}\sup_{x\in\rd}|\partial^\alpha
g^{(t,\lambda)}(x)|.
\]
By Leibniz' formula, the
estimate
$|\partial^\beta\langle
x\rangle^t|\lesssim \langle
x\rangle^{t-|\beta|}$ and
\eqref{aggiunta1} we see that
this last expression is
estimated by
$\max\{1,\lambda^{-t}\}$.

This concludes the proof of the upper half of~\eqref{stima1}.

We now consider the case $0<\lambda \leq 1$. Observe that by \cite{sugimototomita}
we have
$$\|U_\lambda(\la\lambda^{-1}\cdot\ra^t  f)\|_{\M{p,q}{}}
\leq C \lambda^{d \mu_2(p,q)}\|\la\lambda^{-1}\cdot\ra^t
 f\|_{\M{p,q}{}}\asymp \lambda^{d \mu_2(p,q)} \|
  \la\cdot\ra^{-t}\la\lambda^{-1}\cdot\ra^t
  (\la\cdot\ra^{t}f)\|_{\M{p,q}{}}
.$$

Moreover, one easily shows that~\eqref{aggiunta1} still holds  using the same
arguments along with the fact that
$v^{(t,\lambda)}(x)=\lambda^{-t}(\lambda^2+|x|^2)^{t/2}\leq
\lambda^{-t}\la x\ra^t$ when $t\geq 0$. Similarly, $v^{(t,\lambda)}(x)\leq
\la x\ra^t$ when $t<0$. In addition,
$g^{(t,\lambda)}(x)=g^{(-t,\lambda^{-1})}(\lambda^{-1}x)$.
Hence, by the proof of~\eqref{stima1} and \cite[Theorem
3.1]{sugimototomita}, we see that
\[
\|g^{(t,\lambda)}\|_{\M{\infty,1}{}}\lesssim
\|g^{(-t,\lambda^{-1})}\|_{\M{\infty,1}{}}\lesssim\max
\{1,\lambda^{-t}\}.
\]
This establishes the upper half of~\eqref{stima2}.
\end{proof}

We now consider the polynomial weights in the frequency variables $v_{s}(\omega)=\la
\o\ra^s$,
$s\in\R$.
\begin{theorem}\label{mainfreq}
Let $1\leq p,q \leq\infty$, $s\in\R$. Then the following are true:\\
(1) There exists a constant $C>0$ such that $\forall f \in
\M{p,q}{0,s},\,\lambda\geq 1,$
\begin{equation}\label{bof1}
C^{-1}\,\lambda^{d\mu_2(p,q)}\min\{1,\lambda^s\}\,\|f\|_{\M{p,q}{0,s}}\leq  \|
f_\lambda\|_{\M{p,q}{0,s}}\leq   C
\lambda^{d\mu_1(p,q)}\max\{1,\lambda^s\}\,\|f\|_{\M{p,q}{0,s}}.
\end{equation}
 (2) There exists a constant $C>0$ such that $\forall f \in
\M{p,q}{0,s},\,0<\lambda\leq 1,$
\begin{equation}\label{bof2}
C^{-1}\,\lambda^{d\mu_1(p,q)}\min\{1,\lambda^s\}\,\|f\|_{\M{p,q}{0,s}}\leq\|
f_\lambda\|_{\M{p,q}{0,s}}\leq
C\lambda^{d\mu_2(p,q)}\max\{1,\lambda^s\}\,\|f\|_{\M{p,q}{0,s}}.
\end{equation}
\end{theorem}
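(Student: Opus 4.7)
The plan is to follow the template of the proof of Theorem~\ref{xdil}, replacing the spatial weight $\la\cdot\ra^t$ (a multiplication operator) by the frequency weight realized as the Bessel multiplier $\la D\ra^s = (I-\Delta)^{s/2}$. First I would invoke the frequency-side analogue of \cite[Corollary 2.3]{Toftweight}: for all $s_0,s\in\R$, $\la D\ra^s$ is a homeomorphism $\M{p,q}{0,s_0}\to\M{p,q}{0,s_0-s}$, which gives $\|U_\lambda f\|_{\M{p,q}{0,s}}\asymp\|\la D\ra^s U_\lambda f\|_{\M{p,q}{}}$. A direct computation on the Fourier side yields the commutation rule $\la D\ra^s U_\lambda = U_\lambda \la\lambda D\ra^s$, with $\la\lambda D\ra^s$ the Fourier multiplier of symbol $\la\lambda\xi\ra^s$. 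Applying the unweighted Sugimoto--Tomita bound \cite[Theorem 3.1]{sugimototomita} then gives, for $\lambda\ge 1$,
\[
\|U_\lambda f\|_{\M{p,q}{0,s}}\lesssim \lambda^{d\mu_1(p,q)}\|\la\lambda D\ra^s f\|_{\M{p,q}{}}.
\]

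The task therefore reduces to the multiplier estimate $\|\la\lambda D\ra^s f\|_{\M{p,q}{}}\lesssim \max\{1,\lambda^s\}\|\la D\ra^s f\|_{\M{p,q}{}}$, i.e., to bounding the Fourier multiplier $\la\lambda D\ra^s\la D\ra^{-s}$, whose symbol is $\sigma^{(s,\lambda)}(\xi)=\la\lambda\xi\ra^s\la\xi\ra^{-s}$, on $\M{p,q}{}$ with operator norm $\lesssim\max\{1,\lambda^s\}$. Exactly as in Theorem~\ref{xdil}, \cite[Theorem 14.5.2]{book} together with the tensor-product factorization of the $\M{\infty,1}{}(\R^{2d})$ norm for Kohn--Nirenberg symbols depending only on $\xi$ reduces this to the one-variable estimate $\|\sigma^{(s,\lambda)}\|_{\M{\infty,1}{}(\R^d)}\lesssim\max\{1,\lambda^s\}$.

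This last estimate is the main obstacle. Unlike the symbol $g^{(t,\lambda)}$ of Theorem~\ref{xdil}, whose derivatives up to order $d+1$ are uniformly bounded by $\max\{1,\lambda^{-t}\}$, a direct Leibniz computation gives $|\partial^\alpha\sigma^{(s,\lambda)}(\xi)|\lesssim\max\{\lambda^s,\lambda^{|\alpha|}\}\la\xi\ra^{-|\alpha|}$, the spurious growth $\lambda^{|\alpha|}$ being attained at the transition scale $|\xi|\sim 1/\lambda$. Hence the embedding $\cC^{d+1}(\R^d)\hookrightarrow\M{\infty,1}{}(\R^d)$ only yields $\|\sigma^{(s,\lambda)}\|_{\M{\infty,1}{}}\lesssim \max\{1,\lambda^s,\lambda^{d+1}\}$, which is the desired estimate only when $|s|\ge d+1$. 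To cover the gap $|s|<d+1$ I would close the argument by complex interpolation: for any integer $N\ge d+1$ the direct $\cC^{d+1}$ argument just described establishes the theorem on $\M{p,q}{0,\pm N}$, and the standard interpolation identity $\M{p,q}{0,s}=[\M{p,q}{0,0},\M{p,q}{0,N}]_{s/N}$ for weighted modulation spaces (together with its negative-weight counterpart) then extends the bound to arbitrary $s\in\R$.

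The case $0<\lambda\le 1$ in~(2) is handled identically, with $\mu_2$ replacing $\mu_1$ in the unweighted Sugimoto--Tomita estimate. Finally, the lower halves of \eqref{bof1} and \eqref{bof2} follow from the corresponding upper halves applied to $U_{1/\lambda}$, via $f=U_\lambda U_{1/\lambda}f$, exactly as in the proof of Theorem~\ref{xdil}.
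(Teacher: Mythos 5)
Your overall route is the one the paper intends: its published proof of Theorem~\ref{mainfreq} is a one-line reduction via the homeomorphism $f\mapsto\la D\ra^s f$ of \cite[Corollary 2.3]{Toftweight} followed by ``the same arguments as in Theorem~\ref{xdil}''. You deserve credit for noticing that the analogy is not painless: the commutation $\la D\ra^s U_\lambda=U_\lambda\la\lambda D\ra^s$ and the reduction to the multiplier bound $\|\sigma^{(s,\lambda)}\|_{\M{\infty,1}{}}\lesssim\max\{1,\lambda^s\}$ with $\sigma^{(s,\lambda)}(\xi)=\la\lambda\xi\ra^s\la\xi\ra^{-s}$ are correct, and this symbol genuinely has derivatives of size $\lambda^{|\alpha|}$ near $|\xi|\sim1/\lambda$ when $\lambda\ge1$, so the $\cC^{d+1}(\rd)\hookrightarrow\M{\infty,1}{}(\rd)$ device that closes Theorem~\ref{xdil} does not transfer verbatim. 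Your treatment of $0<\lambda\le1$ is fine: there the chain-rule factors $\lambda^{|\beta|}\le1$ are harmless and the direct derivative bounds already give $\max\{1,\lambda^s\}$ for every real $s$, with no interpolation needed.

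The gap is in the interpolation patch for $\lambda\ge1$. Your own estimate $\|\sigma^{(s,\lambda)}\|_{\M{\infty,1}{}}\lesssim\max\{1,\lambda^s,\lambda^{d+1}\}$ matches the target $\max\{1,\lambda^s\}$ only when $s\ge d+1$, not when $|s|\ge d+1$: for $s=-N\le-(d+1)$ the target is $1$ while the $\cC^{d+1}$ bound is $\lambda^{d+1}$, and the loss is real, e.g.\ in $d=1$ one has $\partial_\xi^2\bigl(\la\lambda\xi\ra^{-N}\la\xi\ra^{N}\bigr)\big|_{\xi=0}=N(1-\lambda^2)\sim\lambda^2$. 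So the endpoint $\M{p,q}{0,-N}$ of your interpolation scheme is not established, and the case $s<0$, $\lambda\ge1$ of \eqref{bof1} (hence also the lower half of \eqref{bof2} for $s<0$, which you derive from it) is left unproven. The natural repair, which the paper itself uses elsewhere (proof of Theorem~\ref{mainboth}, and Case~2 of Theorem~\ref{sharp31}), is duality rather than interpolation: for $s\le0$ and $\lambda\ge1$ write $|\ip{U_\lambda f}{g}|=\lambda^{-d}|\ip{f}{U_{1/\lambda}g}|$, take the supremum over $\|g\|_{\M{p',q'}{0,-s}}\le1$, apply the already-proved small-dilation bound with the nonnegative weight $-s$ on $\M{p',q'}{0,-s}$, and use the relation $\mu_2(p',q')=-1-\mu_1(p,q)$ (with Lemma~\ref{Fabio} to handle $p$ or $q$ equal to $\infty$); this yields exactly $C\lambda^{d\mu_1(p,q)}\max\{1,\lambda^s\}$.
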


\begin{proof} Here we use the fact that the mapping $f\mapsto \la D\ra^s f$ is an
homeomorphism from $\M{p,q}{t,s_0}$
 to $\M{p,q}{t,s_0-s}$, $t,s,s_0\in\R$ (see \cite[Corollary 2.3]{Toftweight}). The
rest of the proof uses similar  arguments as those in Theorem \ref{xdil}.
\end{proof}

The next result follows immediately by combining the last two theorems.
\begin{corollary} \label{xodil}
Let $1\leq p,q \leq\infty$, $t,s\in\R$. Then the following are true:\\
(1) There exists a constant $C>0$ such that $\forall f \in
\M{p,q}{t,s},\,\lambda\geq 1,$
\begin{align*}
C^{-1}\lambda^{d\mu_2(p,q)}\min\{1,\lambda^{-t}\}\min\{1,\lambda^{s}\}\,&\|f\|_{\M{p,q}{t,s}}
\leq \| f_\lambda\|_{\M{p,q}{t,s}}\\[1 \jot]
&  \leq  C
\lambda^{d\mu_1(p,q)}\max\{1,\lambda^{-t}\}\max\{1,\lambda^{s}\}\,\|f\|_{\M{p,q}{t,s}}.
\end{align*}
 (2) There exists a constant $C>0$ such that $\forall f \in
\M{p,q}{t,s},\,0<\lambda\leq 1,$
\begin{align*}
C^{-1}
\lambda^{d\mu_1(p,q)}\min\{1,\lambda^{-t}\}\min\{1,\lambda^{s}\}\,&\|f\|_{\M{p,q}{t,s}}
\leq \| f_\lambda\|_{\M{p,q}{t,s}}\\[1 \jot]
&   \leq C
\lambda^{d\mu_2(p,q)}\max\{1,\lambda^{-t}\}\max\{1,\lambda^{s}\}\,\|f\|_{\M{p,q}{t,s}}.
\end{align*}
\end{corollary}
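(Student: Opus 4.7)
The plan is to reduce the bilaterally weighted dilation estimate to the unweighted one \cite[Theorem 3.1]{sugimototomita} by simultaneously stripping both weights via the isomorphism $f\mapsto \la\cdot\ra^t \la D\ra^s f$ from $\M{p,q}{t,s}$ onto $\M{p,q}{}$ (obtained by iterating \cite[Corollary 2.3]{Toftweight}). Combining the two commutations $\la\cdot\ra^t U_\lambda = U_\lambda \la\lambda^{-1}\cdot\ra^t$ and $\la D\ra^s U_\lambda = U_\lambda \la\lambda D\ra^s$ already exploited inside the proofs of Theorems \ref{xdil} and \ref{mainfreq}, the starting identity is
\[
\|U_\lambda f\|_{\M{p,q}{t,s}}\asymp \|\la\cdot\ra^t \la D\ra^s U_\lambda f\|_{\M{p,q}{}} =\|U_\lambda\bigl(\la\lambda^{-1}\cdot\ra^t \la\lambda D\ra^s f\bigr)\|_{\M{p,q}{}}.
\]

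For the upper bound in case (1), I would apply \cite[Theorem 3.1]{sugimototomita} to the right-hand side to extract the factor $\lambda^{d\mu_1(p,q)}$, then estimate the residual norm
\[
\|\la\lambda^{-1}\cdot\ra^t \la\lambda D\ra^s f\|_{\M{p,q}{}} \lesssim \max\{1,\lambda^{-t}\}\max\{1,\lambda^s\}\,\|f\|_{\M{p,q}{t,s}}.
\]
This split is performed by factoring $\la\lambda^{-1}\cdot\ra^t = g^{(t,\lambda)}(\cdot)\,\la\cdot\ra^t$ and, on the Fourier side, $\la\lambda D\ra^s = h^{(s,\lambda)}(D)\,\la D\ra^s$, with both symbols lying in $\M{\infty,1}{}$ of norm $\lesssim\max\{1,\lambda^{-t}\}$ and $\lesssim\max\{1,\lambda^s\}$ respectively. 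The first of these bounds is the $\M{\infty,1}{}$ estimate already proved inside Theorem \ref{xdil}; the second is its Fourier analogue furnished by Theorem \ref{mainfreq}. Boundedness of the resulting Kohn--Nirenberg operators on $\M{p,q}{}$ is then \cite[Theorem 14.5.2]{book}. The upper bound in case (2) is obtained identically, substituting the $0<\lambda\leq 1$ branches of Theorems \ref{xdil}, \ref{mainfreq}, and of \cite[Theorem 3.1]{sugimototomita}.

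The lower bounds in both cases are extracted from the upper bounds by the duality-in-$\lambda$ trick $f = U_{1/\lambda}(U_\lambda f)$, exactly as in the opening of the proof of Theorem \ref{xdil}: apply the upper bound in the opposite-sign regime to $U_{1/\lambda}$ and rearrange. I do not expect any genuine obstacle here; the main conceptual point is the interplay of the two commutations, and the only micro-verification is that both weight-stripping symbols belong to $\M{\infty,1}{}$ with the expected $\lambda$-dependence, which is a direct consequence of work already done inside the proofs of Theorems \ref{xdil} and \ref{mainfreq}.
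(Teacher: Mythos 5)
Your argument is correct and is essentially the paper's own proof made explicit: the paper disposes of this corollary in one line (``follows immediately by combining the last two theorems''), and the intended combination is exactly your reduction --- strip the weights via the Toft isomorphisms, commute them past $U_\lambda$, invoke the unweighted Sugimoto--Tomita bound, and absorb the residual symbols $g^{(t,\lambda)}$ and $h^{(s,\lambda)}$ using the $\M{\infty,1}{}$ estimates already established inside Theorems \ref{xdil} and \ref{mainfreq}. The only point worth a moment's care is that after peeling off one symbol you need the other ($h^{(s,\lambda)}(D)$, say) bounded on a \emph{weighted} modulation space rather than on $\M{p,q}{}$ itself, but this is the same (standard) strengthening of \cite[Theorem 14.5.2]{book} that the paper already relies on implicitly in the proof of Theorem \ref{mainfreq}.
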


The following result is an analogue of Corollary~\ref{xodil} for modulation spaces
defined by non-separable polynomial growing
weight function such as $v_s \phas:=\la \phas\ra^s=(1+|x|^2+|\o|^2)^{s/2}$. 
\begin{theorem}\label{mainboth}
Let $1\leq p,q \leq\infty$, $s\in\R$. Then the following are true:\\
(1) There exists a constant $C>0$ such that $\forall f \in
\M{p,q}{v_s},\,\lambda\geq 1,$
\begin{equation}
C^{-1} \lambda^{d\mu_2(p,q)}\min\{\lambda^{-s},\lambda^s\}\,\|f\|_{\M{p,q}{v_s}}\leq
\| f_\lambda\|_{\M{p,q}{v_s}}   \leq  C
\lambda^{d\mu_1(p,q)}\max\{\lambda^{-s},\lambda^s\}\,\|f\|_{\M{p,q}{v_s}}.
\label{boxf1}
\end{equation}
 (2) There exists a constant $C>0$ such that $\forall f \in
\M{p,q}{v_s},\,0<\lambda\leq 1,$
\begin{equation}
C^{-1} \lambda^{d\mu_1(p,q)}\min\{\lambda^{-s},\lambda^s\}\,\|f\|_{\M{p,q}{v_s}}
\leq \| f_\lambda\|_{\M{p,q}{v_s}}
  \leq  C \lambda^{d\mu_2(p,q)}\max\{\lambda^{-s},\lambda^s\}\,\|f\|_{\M{p,q}{v_s}}.
\label{boxf2}
\end{equation}
\end{theorem}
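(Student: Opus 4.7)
The theorem treats the non-separable phase-space weight $v_s(x,\omega) = \langle(x,\omega)\rangle^s$, whereas Corollary~\ref{xodil} handles only the tensor-product weights $v_t(x)v_s(\omega)$. My plan is to split the argument according to the sign of $s$, reducing the case $s\geq 0$ directly to Theorems~\ref{xdil} and \ref{mainfreq} and the case $s<0$ to the former by duality.

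For $s\geq 0$ one has the elementary pointwise equivalence $v_s(x,\omega)\asymp v_s(x)+v_s(\omega)$, which together with the inequality $(a+b)^r\asymp a^r+b^r$ for $a,b\geq 0$, $r>0$, yields
$$\|f\|_{\M{p,q}{v_s}}\asymp \|f\|_{\M{p,q}{s,0}}+\|f\|_{\M{p,q}{0,s}}.$$
To prove the upper bound of (1), I estimate each summand of $\|U_\lambda f\|_{\M{p,q}{s,0}}+\|U_\lambda f\|_{\M{p,q}{0,s}}$ by Theorems~\ref{xdil} and \ref{mainfreq}; when $\lambda\geq 1$ and $s\geq 0$ the factors $\max\{1,\lambda^{-s}\}$ and $\max\{1,\lambda^{s}\}$ collapse to $1$ and $\lambda^{s}$ respectively, so that the dominant term $\lambda^{d\mu_1(p,q)}\lambda^{s}$ matches $\max\{\lambda^{-s},\lambda^{s}\}$. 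The upper bound of (2) for $s\geq 0$ is analogous, using the $\lambda\leq 1$ halves of the two previous theorems.

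For $s<0$ with $1\leq p,q<\infty$ I pass to the dual. Using $(\M{p,q}{v_s})'=\M{p',q'}{v_{-s}}$ and the transpose relation $(U_\lambda)^{*}=\lambda^{-d}U_{1/\lambda}$, one has
$$\|U_\lambda\|_{\M{p,q}{v_s}\to\M{p,q}{v_s}}=\lambda^{-d}\,\|U_{1/\lambda}\|_{\M{p',q'}{v_{-s}}\to\M{p',q'}{v_{-s}}}.$$
Now $-s>0$, and $1/\lambda\leq 1$ whenever $\lambda\geq 1$, so the $s\geq 0$ case just proved (part (2)) applies to $U_{1/\lambda}$ on $\M{p',q'}{v_{-s}}$. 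Rearranging gives the upper bound of (1) for $s<0$, provided one knows the identity $\mu_1(p,q)+\mu_2(p',q')=-1$. The analogous argument handles the upper bound of (2) for $s<0$. The endpoint cases with $p=\infty$ or $q=\infty$ are recovered via Lemma~\ref{Fabio}. Finally, all lower bounds follow from the upper bounds by writing $f=U_{1/\lambda}(U_\lambda f)$ and inverting the inequality, which interchanges the roles of $\mu_1,\mu_2$ and turns $\max$ into $\min$.

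The principal obstacle is the verification of the identity $\mu_1(p,q)+\mu_2(p',q')=-1$. This requires tracing how the six index regions $I_i,I_i^{*}$ $(i=1,2,3)$ transform under the involution $(1/p,1/q)\mapsto(1-1/p,1-1/q)$ and then checking the identity separately on $I_1^{*},I_2^{*},I_3^{*}$, where $\mu_1$ is given by different formulas. A careful case analysis shows that the duality pairs up the regions as $I_1^{*}\leftrightarrow I_1$, $I_2^{*}\leftrightarrow I_2$, $I_3^{*}\leftrightarrow I_3$, and a direct substitution confirms $-1/p+(-1/p')=-1$, $(1/q-1)+(1/q'-1)=-1$, and $(-2/p+1/q)+(-2/p'+1/q')=-1$ in the three respective cases, closing the argument.
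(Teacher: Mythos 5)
Your proof is correct, but it takes a genuinely different route from the paper's. The paper first reduces (by interpolation together with Lemma~\ref{Fabio}) to the case where $s$ is an even nonnegative integer, transports the problem to the unweighted space via Toft's isomorphism $f\mapsto \la x,D\ra^s f$, and then must show that the pseudodifferential operator $\la \lambda^{-1}x,\lambda D\ra^s\la x,D\ra^{-s}$ is bounded on $\M{p,q}{}$ with norm $O(\max\{\lambda^s,\lambda^{-s}\})$, which it does by expanding $\la\lambda^{-1}x,\lambda D\ra^s$ into monomials $\lambda^k x^\alpha D^\beta$ and invoking Shubin's calculus. You instead exploit the additive splitting $v_s(x,\omega)\asymp \la x\ra^s+\la\omega\ra^s$ valid for $s\geq 0$, which by solidity and subadditivity of the mixed norm gives $\|f\|_{\M{p,q}{v_s}}\asymp\|f\|_{\M{p,q}{s,0}}+\|f\|_{\M{p,q}{0,s}}$, so the bounds drop out of Theorems~\ref{xdil} and~\ref{mainfreq} term by term; the case $s<0$ is then handled by duality, exactly the device the paper itself invokes, and your identity $\mu_1(p,q)+\mu_2(p',q')=-1$ is the relation recorded in the paper just before Proposition~\ref{mainbothW}. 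Your route is more elementary --- no pseudodifferential calculus, no interpolation, no detour through integer $s$ --- at the price of leaning on the two separable-weight theorems already proved; the paper's symbol-calculus argument is self-contained relative to the unweighted Sugimoto--Tomita bound and would adapt to weights that do not split additively. The one point worth spelling out is the endpoint $p=\infty$ or $q=\infty$ in your duality step, where $(\M{p,q}{v_s})'\neq\M{p',q'}{v_{-s}}$: there one should run the pairing $|\ip{U_\lambda f}{g}|=\lambda^{-d}|\ip{f}{U_{1/\lambda}g}|$ over Schwartz $f,g$ and extend by Lemma~\ref{Fabio}, which is what your final sentence implicitly does and what the paper does explicitly in Case~2 of the proof of Theorem~\ref{sharp31}.
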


\begin{proof}

We  assume  $s\geq 0$.   A duality argument can be used to complete
the proof when $s<0$. (Notice, this duality argument will be given explicitly below
in the proof of the sharpness of Theorem~\ref{xdil} in the case $(1/p,1/q)\in I_2$,
$t\geq
0$).

Moreover, since the result has been
proved in \cite[Theorem
3.1]{sugimototomita} for $s=0$, one can
use interpolation arguments along with
Lemma~\ref{Fabio} to reduce the proof
when $s$ is an even integer.

The mapping $f\mapsto
\la x,D\ra^s f$ is an
homeomorphism from
$\M{p,q}{v_s}$
 to $\M{p,q}{}$, $s\in\R$
 (see \cite[Theorem 2.2]{Toftweight}). Hence
 \begin{align*} \|f_\lambda\|_{\M{p,q}{v_s}}&\asymp
 \|\la x,D\ra^s f_\lambda\|_{\M{p,q}{}}\\
 &=\|U_\lambda (\la \lambda^{-1} x,\lambda D\ra ^sf)
 \|_{\M{p,q}{}}\\
 &\leq C \begin{cases}
\lambda^{d\mu_1(p,q)}\|\la \lambda^{-1} x,\lambda
D\ra^s f \|_{\M{p,q}{}}, \quad \lambda\geq1\\
\lambda^{d\mu_2(p,q)}\|\la \lambda^{-1} x,\lambda
D\ra^s f \|_{\M{p,q}{}}, \quad 0<\lambda\leq1,
\end{cases}
 \end{align*}
where in the last inequality
we used again the dilation
properties for unweighted
modulation spaces of
\cite[Theorem
3.1]{sugimototomita}.
Therefore, writing $f=\langle
x,D\rangle^{-s} \langle
x,D\rangle^{s}f$ we see that
it suffices to prove that the pseudodifferential
operator
\[
\la \lambda^{-1} x,\lambda
D\ra^s \langle
x,D\rangle^{-s}
\]
is bounded on $\M{p,q}{}$, and its norm is bounded above by
$\max\{1,\lambda^{-s}\}\max\{1,\lambda^s\}=\max\{\lambda^s,\lambda^{-s}\}$.
To this end, we observe that,
if $s$ is an even integer,
$\la \lambda^{-1} x,\lambda
D\ra^s$ is a finite sum of
operators of the form
$\lambda^{k}x^\alpha
D^\beta$, with $|k|\leq s$
and $|\alpha|+|\beta|\leq s$.
Now, Shubin's
pseudo-differential calculus
\cite{shubin} shows that the
operators $x^\alpha D^\beta
\langle x,D\rangle^{-s}$ have
bounded symbols, together
with all their derivatives,
so that they are bounded on
$\M{p,q}{}$. The proof is completed by taking into
account the additional factor
$\lambda^{k}$. 
\end{proof}

Finally, it is relatively straightforward to give optimal estimates for the dilation
operator $U_{\lambda}$ on the Wiener amalgam spaces $W(\cF
L^p_s,L^q_t)$. These spaces are images of modulation spaces under Fourier transform,
that is $\cF \M{p,q}{t,s}=W(\cF L^p_s,L^q_t)$. It is also worth noticing that the
indices $\mu_1$ and $\mu_2$ obey the following relations,
$$ \mu_1(p',q')=-1-\mu_2(p,q), \quad \mu_2(p',q')=-1-\mu_1(p,q)\, \,
\textrm{whenever}\, \, \tfrac{1}{p}+\tfrac{1}{p'}=\tfrac{1}{q}+\tfrac{1}{q'}=1.$$
 Using the above relations along with the definition of the Wiener amalgam spaces,
as well as the behavior of the Fourier transform under dilation, i.e.,
$\widehat{f_\lambda}=\lambda^{-d}(\hat{f})_{\frac1\lambda}$ and Corollary
\ref{xodil} we obtain the following result
\begin{proposition}\label{mainbothW}
Let $1\leq p,q \leq\infty$, $t,s\in\R$. Then the following are true:\\
(1) There exists a constant $C>0$ such that $\forall f \in W(\cF
L^p_s,L^q_t),\,\lambda\geq 1,$
\begin{align*}
C^{-1}
\lambda^{d\mu_2(p',q')}\min\{1,\lambda^{t}\}\min\{1,&\lambda^{-s}\}\,\|f\|_{W(\cF
L^p_s,L^q_t)} \leq \| f_\lambda\|_{W(\cF L^p_s,L^q_t)}\\[1 \jot]
&  \leq  C
\lambda^{d\mu_1(p',q')}\max\{1,\lambda^{t}\}\max\{1,\lambda^{-s}\}\,\|f\|_{W(\cF
L^p_s,L^q_t)}.
\end{align*}
 (2) There exists a constant $C>0$ such that $\forall f \in W(\cF
L^p_s,L^q_t),\,\lambda\leq 1,$
\begin{align*}
C^{-1}
\lambda^{d\mu_1(p',q')}\min\{1,\lambda^{t}\}\min\{1,&\lambda^{-s}\}\,\|f\|_{W(\cF
L^p_s,L^q_t)} \leq \| f_\lambda\|_{W(\cF L^p_s,L^q_t)}\\[1 \jot]
&   \leq
C\lambda^{d\mu_2(p',q')}\max\{1,\lambda^{t}\}\max\{1,\lambda^{-s}\}\,\|f\|_{W(\cF
L^p_s,L^q_t)}.
\end{align*}
\end{proposition}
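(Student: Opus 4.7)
\medskip

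\noindent\textbf{Proof proposal.} The strategy is to transfer the problem from the Wiener amalgam setting to the modulation-space setting by conjugation with the Fourier transform, and then apply the already-established Corollary~\ref{xodil} with scaling parameter $1/\lambda$ in place of $\lambda$. Concretely, since $\cF\M{p,q}{t,s}=W(\cF L^p_s,L^q_t)$, for every $f$ we have
\begin{equation*}
\|f\|_{W(\cF L^p_s,L^q_t)}\asymp \|\cF^{-1}f\|_{\M{p,q}{t,s}}.
\end{equation*}
Setting $g:=\cF^{-1}f$, the scaling identity $\widehat{f_\lambda}=\lambda^{-d}(\hat f)_{1/\lambda}$ (equivalently, $\cF^{-1}(f_\lambda)=\lambda^{-d}g_{1/\lambda}$) yields
\begin{equation*}
\|f_\lambda\|_{W(\cF L^p_s,L^q_t)}\asymp \lambda^{-d}\,\|g_{1/\lambda}\|_{\M{p,q}{t,s}}.
\end{equation*}

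\smallskip

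Next I would apply Corollary~\ref{xodil} to $g$ with dilation parameter $1/\lambda$, being careful to swap the two cases: when $\lambda\geq 1$ one has $1/\lambda\leq 1$, so case~(2) of that corollary applies, and symmetrically the regime $0<\lambda\leq 1$ is controlled by case~(1). Using $(1/\lambda)^{-t}=\lambda^{t}$ and $(1/\lambda)^{s}=\lambda^{-s}$, the $\min$/$\max$ factors $\min\{1,(1/\lambda)^{-t}\}\min\{1,(1/\lambda)^{s}\}$ transform exactly into $\min\{1,\lambda^{t}\}\min\{1,\lambda^{-s}\}$ (and similarly for the $\max$). The remaining $\lambda^{-d(1+\mu_i(p,q))}$ prefactor is then converted by means of the identities
\begin{equation*}
\mu_1(p',q')=-1-\mu_2(p,q),\qquad \mu_2(p',q')=-1-\mu_1(p,q),
\end{equation*}
recorded just before the statement, giving $\lambda^{d\mu_2(p',q')}$ and $\lambda^{d\mu_1(p',q')}$ in the lower and upper bounds of \eqref{boxf1}, respectively, and analogously for \eqref{boxf2} after the swap of cases.

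\smallskip

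There is no serious obstacle here: once the dictionary $\cF\M{p,q}{t,s}=W(\cF L^p_s,L^q_t)$ is accepted, the proof is essentially bookkeeping. The main point requiring care is ensuring that upper bounds on $\|g_{1/\lambda}\|_{\M{p,q}{t,s}}$ translate into upper bounds on $\|f_\lambda\|_{W(\cF L^p_s,L^q_t)}$ (they do, since the $\asymp$ above is two-sided), and that the case split in $\lambda$ is inverted when passing from $\lambda$ to $1/\lambda$. One then obtains both halves of each inequality in \eqref{boxf1} and \eqref{boxf2} in a single step, by applying the two-sided estimate of Corollary~\ref{xodil} to $g_{1/\lambda}$ and using the index relations above to rewrite every $\mu_i(p,q)$ as $\mu_j(p',q')$.
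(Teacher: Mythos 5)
Your argument is correct and is precisely the paper's own derivation: the paper proves this proposition in one sentence by combining $\cF\M{p,q}{t,s}=W(\cF L^p_s,L^q_t)$, the scaling identity $\widehat{f_\lambda}=\lambda^{-d}(\hat f)_{1/\lambda}$, the index relations $\mu_1(p',q')=-1-\mu_2(p,q)$ and $\mu_2(p',q')=-1-\mu_1(p,q)$, and Corollary~\ref{xodil} applied with parameter $1/\lambda$ (hence the swap of the two cases), exactly as you do. The only blemish is notational: the displays you cite as \eqref{boxf1} and \eqref{boxf2} belong to Theorem~\ref{mainboth}, not to the proposition being proved.
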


\section{Sharpness of Theorems \ref{xdil} and \ref{mainfreq}.}\label{sharpness}
In this section we prove the sharpness of Theorems \ref{xdil} and \ref{mainfreq}.
The sharpness of Theorem \ref{mainboth} is proved by modifying the examples constructed in the next subsection. Therefore we omit it. But we first prove some preliminary lemmas in which we construct functions that achieve
 the optimal bound.

\subsection{ Preliminary Estimates}
The next two lemmas involve estimates for the modulation space
norms of various modifications of the Gaussian. Together with
Lemmas~\ref{istar1}--\ref{i3negative}, they provide examples of
functions that achieve the optimal bound under the dilation
operator on weighted modulation spaces with weight on the space
parameter. Similar constructions for weighted modulation spaces
with weight on the frequency parameter are contained in
Lemmas~\ref{i1}--\ref{inftynegfreq}. Finally, in
Lemma~\ref{altertf} we investigated the property of the dilation
operator on compactly supported functions. 

Recall that $\varphi(x)=e^{-\pi |x|^{2}}$ for $x \in \rd$, and that $\varphi_{\lambda}(x)=U_{\lambda}\varphi(x)=\varphi(\lambda x).$

\begin{lemma}\label{Gaussian} For $t, s\geq
0$, we have
\begin{equation}\label{zero}
\|\f_\lambda\|_{M^{p,q}_{t,0}}\asymp\lambda^{-\frac d p-t},\quad 0<\lambda\leq 1,
\end{equation}
\begin{equation}\label{infinity}
\|\f_\lambda\|_{M^{p,q}_{t,0}}\asymp\lambda^{- d\left(1-\frac1q\right)},\quad
\lambda\geq1,
\end{equation}

\begin{equation}\label{zerof}
\|\f_\lambda\|_{\M{p,q}{0,s}}\asymp \lambda^{-\frac d p},\quad 0<\lambda\leq1,
\end{equation}
and
\begin{equation}\label{infinityf}
 \|\f_\lambda\|_{\M{p,q}{0,s}}\asymp \lambda^{- d\left(1-\frac1q\right)+s},\quad
\lambda\geq 1.
\end{equation}

\end{lemma}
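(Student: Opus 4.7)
The plan is to choose the Gaussian itself as the STFT window and exploit the fact that the short-time Fourier transform of a Gaussian against a Gaussian is again a (modulated) Gaussian, whose modulus separates as a product of a function of $x$ and a function of $\omega$. Concretely, a direct computation (completing the square in the exponent) yields
\begin{equation*}
V_\varphi \varphi_\lambda(x,\omega) = (1+\lambda^2)^{-d/2}\, e^{-\pi\lambda^2|x|^2/(1+\lambda^2)}\, e^{-\pi|\omega|^2/(1+\lambda^2)}\, e^{-2\pi i x\cdot\omega/(1+\lambda^2)}.
\end{equation*}
Since the definition of $\M{p,q}{t,s}$ is independent of the nonzero Schwartz window, I am free to use $\varphi$ and conclude that
\begin{equation*}
\|\varphi_\lambda\|_{\M{p,q}{t,s}} \asymp (1+\lambda^2)^{-d/2}\, \bignm{e^{-\pi\lambda^2|\cdot|^2/(1+\lambda^2)}}_{L^p(\langle x\rangle^{tp}\,dx)}\, \bignm{e^{-\pi|\cdot|^2/(1+\lambda^2)}}_{L^q(\langle\omega\rangle^{sq}\,d\omega)}.
\end{equation*}
In particular, the norm factors, and the four estimates \eqref{zero}--\eqref{infinityf} reduce to evaluating the asymptotics of two weighted Gaussian integrals.

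The plan for Step 3 is a routine rescaling in each regime. For $\lambda\geq 1$ the quantity $\lambda^2/(1+\lambda^2)$ is bounded between $\tfrac12$ and $1$, so the $x$-factor is a Gaussian of bounded width; the weight $\langle x\rangle^{tp}$ is integrable against it and the $L^p(\langle x\rangle^{tp})$ norm is $\asymp 1$ uniformly in $\lambda$, independently of $t\geq 0$. The $\omega$-factor is a Gaussian of width $\asymp\sqrt{1+\lambda^2}\asymp\lambda$; substituting $u=\omega/\sqrt{1+\lambda^2}$ shows that its weighted $L^q(\langle\omega\rangle^{sq})$ norm is $\asymp(1+\lambda^2)^{d/(2q)+s/2}\asymp \lambda^{d/q+s}$. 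Multiplying by $(1+\lambda^2)^{-d/2}\asymp\lambda^{-d}$ produces $\lambda^{-d(1-1/q)}$ and $\lambda^{-d(1-1/q)+s}$, proving \eqref{infinity} and \eqref{infinityf}. For $0<\lambda\leq 1$, the roles reverse: $(1+\lambda^2)^{-d/2}\asymp 1$, the $\omega$-factor has bounded width and its unweighted/weighted $L^q$ norm is $\asymp 1$, while the $x$-factor is a Gaussian of width $\asymp 1/\lambda$. A substitution $u=\lambda x$ then gives $\int e^{-\pi p\lambda^2|x|^2/(1+\lambda^2)}\langle x\rangle^{tp}\,dx\asymp \lambda^{-d}\lambda^{-tp}$ for $t\geq 0$ (using $\langle u/\lambda\rangle^{tp}=\lambda^{-tp}(\lambda^2+|u|^2)^{tp/2}$), which yields $\lambda^{-d/p-t}$ and, in the unweighted case, $\lambda^{-d/p}$, proving \eqref{zero} and \eqref{zerof}.

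The main obstacle, modest as it is, is verifying that the asymptotic equivalence $\asymp$ is genuine in both directions. The upper bounds come for free from the elementary inequalities $\langle a+b\rangle\leq \sqrt{2}\langle a\rangle\langle b\rangle$ and the dominated convergence of the rescaled integrals; the lower bounds follow by restricting the domain of integration to the region where $|u|\asymp 1$ (so that the polynomial weight is of the expected order) and using that the Gaussian is bounded below by a positive constant there. With these matching bounds in each of the four regimes, the claimed asymptotics follow.
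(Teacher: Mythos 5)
Your proposal is correct and follows essentially the same route as the paper: both compute $|V_\varphi\varphi_\lambda(x,\omega)|$ explicitly as a separated product of Gaussians in $x$ and $\omega$ with prefactor $(1+\lambda^2)^{-d/2}$, and then extract the asymptotics of the resulting weighted Gaussian integrals in the regimes $\lambda\geq 1$ and $0<\lambda\leq 1$ by the same rescaling $u=\lambda x$ and the identity $\langle u/\lambda\rangle^{tp}=\lambda^{-tp}(\lambda^2+|u|^2)^{tp/2}$. The two-sided bounds you sketch (restricting to $|u|\asymp 1$ for the lower bound) are exactly what is needed, so no gap remains.
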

\begin{proof}
We shall only prove the first two estimates,  as the last two are proved similarly.
By some straightforward computations, (see, e.g., \cite[Lemma 1.5.2]{book})  we get
\begin{equation}\label{aggiunta.0}
|V_\f \f_\lambda \phas| =
(\lambda^2+1)^{-\frac d
2}e^{-\pi
\frac{\lambda^2}{\lambda^2+1}|x|^2}e^{-\pi
\frac{1}{\lambda^2+1}|\o|^2}.
\end{equation}
Hence
\begin{align*}\|\f_\lambda\|_{\M{p,q}{t,0}}&\asymp \|V_\f
\f_\lambda\|_{\M{p,q}{t,0}}\\
&=p^{-2p}q^{-2q}\lambda^{-\frac{d}p}(\lambda^2+1)^{\frac d2(\frac 1p +\frac 1
q-1)}\left( \intrd e^{-\pi |x|^2}\la
\frac{\sqrt{\lambda^2+1}}{\lambda\sqrt{p}}x\ra^{pt} \,d x\right)^{\frac 1p}.
\end{align*}
If $0< \lambda \leq 1$, then
$$\lambda^{-t}|x|^{t/2}\leq (\tfrac{\lambda^{2}+1}{\lambda^{2}}|x|^{2})^{t/2}\leq
\big(1 + \tfrac{\lambda^{2}+1}{\lambda^{2}}|x|^{2}\big)^{t/2}\leq 2 \lambda^{-t} (1+
|x|^{2})^{t/2}.$$ Thus, we have
$$ \lambda^{-t}\lesssim \left(\intrd e^{-\pi |x|^2}\la
\frac{\sqrt{\lambda^2+1}}{\lambda\sqrt{p}}x\ra^{pt}
\,d x\right)^{1/p}\lesssim \lambda^{-t}, \quad 0< \lambda\leq 1,$$
and the estimate \eqref{zero} follows.

Now, observe that, if $\lambda\geq 1$, then  $\la
\frac{\sqrt{\lambda^2+1}}{\lambda\sqrt{p}}x\ra \asymp \la x \ra$ and
\eqref{infinity} follows.
\end{proof}

\begin{lemma}\label{i2star} For $t\leq 0$, $\lambda\geq1$, consider the family of
functions
\begin{equation}\label{TransGaus}
f(x)=\lambda^{-t}\f(x-\lambda
e_1),\ \
e_1=(1,0,0,\ldots,0).
\end{equation}
Then there exists a constant $C>0$  such that
$\|f\|_{\M{p,q}{t,0}}\leq C $, uniformly with respect to $\lambda$. Moreover,
\begin{equation}\label{bo1}
\|f_\lambda\|_{\M{p,q}{t,0}}\gtrsim \lambda^{-t+d(\frac1q-1)},\quad
\forall\,\lambda\geq1.
\end{equation}
\end{lemma}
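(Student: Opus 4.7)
The plan is to work directly with the Gaussian-window STFT, exploiting the explicit formula already recalled in \eqref{aggiunta.0}. Since $f=\lambda^{-t}T_{\lambda e_1}\varphi$ and $V_\varphi(T_yh)(x,\omega)=e^{-2\pi i\omega\cdot y}V_\varphi h(x-y,\omega)$, I would first record
\[
|V_\varphi f(x,\omega)|=\lambda^{-t}\,2^{-d/2}\,e^{-\pi|x-\lambda e_1|^2/2}\,e^{-\pi|\omega|^2/2}.
\]
The weight $v_t$ with $t\leq 0$ is bounded by $1$ globally, but the key observation for the uniform upper bound is that on the region where the Gaussian in $x$ is not exponentially small, namely $|x-\lambda e_1|\lesssim \lambda/2$, one has $|x|\asymp\lambda$ and hence $v_t(x)^p\asymp \lambda^{tp}$ (since $t\leq 0$, $\lambda\geq 1$). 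This cancels the $\lambda^{-tp}$ prefactor. Outside this region the Gaussian $e^{-\pi p|x-\lambda e_1|^2/2}$ is smaller than $e^{-c\lambda^2}$ and absorbs any polynomial factor. Performing the resulting $x$-integral yields a quantity bounded independently of $\lambda$; taking the $q/p$-th power and integrating the leftover Gaussian factor $e^{-\pi q|\omega|^2/2}$ in $\omega$ produces the uniform bound $\|f\|_{\M{p,q}{t,0}}\leq C$.

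For the lower bound on $\|f_\lambda\|_{\M{p,q}{t,0}}$, I would rewrite $f_\lambda(x)=\lambda^{-t}\varphi_\lambda(x-e_1)=\lambda^{-t}T_{e_1}\varphi_\lambda$, so that by covariance of the STFT and the formula \eqref{aggiunta.0} applied to $\varphi_\lambda$,
\[
|V_\varphi f_\lambda(x,\omega)|=\lambda^{-t}(\lambda^2+1)^{-d/2}\,e^{-\pi\frac{\lambda^2}{\lambda^2+1}|x-e_1|^2}\,e^{-\pi\frac{1}{\lambda^2+1}|\omega|^2}.
\]
Now I would restrict the $x$-integration to the ball $|x-e_1|\leq 1/2$. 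On this ball, $|x|\leq 3/2$ so $v_t(x)\geq v_t(3/2)\gtrsim 1$ (using $t\leq 0$), the coefficient $\lambda^2/(\lambda^2+1)\in[1/2,1]$ for $\lambda\geq 1$, and the $x$-Gaussian is bounded below by a positive constant independent of $\lambda$. The resulting lower bound on the inner integral is $\gtrsim \lambda^{-tp}(\lambda^2+1)^{-dp/2}\,e^{-\pi p|\omega|^2/(\lambda^2+1)}$.

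Raising to the $q/p$-th power and integrating in $\omega$, the $\omega$-Gaussian now has width $\asymp\sqrt{\lambda^2+1}\asymp\lambda$, so $\int_{\R^d}e^{-\pi q|\omega|^2/(\lambda^2+1)}\,d\omega\asymp \lambda^d$. Combining these yields $\|f_\lambda\|_{\M{p,q}{t,0}}^q\gtrsim \lambda^{-tq}\lambda^{-dq}\lambda^d$, and taking $q$-th roots gives exactly $\lambda^{-t+d(1/q-1)}$, as claimed. The only nuisance is keeping track of the cases $p=\infty$ or $q=\infty$, where the integrals are replaced by essential suprema; however, the Gaussian-support argument still produces the sharp power of $\lambda$ by evaluating at $x=e_1$ for the inner supremum and optimizing the outer one. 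No real obstacle is expected, since every step reduces to elementary Gaussian integrals, but the bookkeeping between the two weight factors $\lambda^{-t}$ and $(\lambda^2+1)^{-d/2}$ must be done carefully so that the powers of $\lambda$ combine correctly.
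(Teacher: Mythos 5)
Your proof is correct and follows essentially the same route as the paper: both arguments rest on the explicit Gaussian formula \eqref{aggiunta.0} together with translation covariance of the STFT, and your lower-bound computation (restricting to a fixed ball around $e_1$ and letting the $\omega$-Gaussian of width $\asymp\lambda$ produce the factor $\lambda^{d/q}$) is the same calculation the paper performs. The only cosmetic difference is in the uniform upper bound, where you split the $x$-integral into the region $|x|\asymp\lambda$ and the Gaussian tail, whereas the paper invokes the moderation inequality $\langle x+\lambda e_1\rangle^{t}\lesssim\lambda^{t}\langle x\rangle^{-t}$ to cancel the prefactor $\lambda^{-t}$ in one line; the two devices are interchangeable here.
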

\begin{proof}
We have
\begin{align*}
\|f\|_{\M{p,q}{t,0}}&\asymp
\|V_\f f(x,\omega)\langle
x\rangle^t\|_{L^{p,q}}=\lambda^{-t}\|V_\f
\f(x-\lambda
e_1,\omega)\langle
x\rangle^t\|_{L^{p,q}}\\
&=\lambda^{-t}\|V_\f
\f(x,\omega)\langle x+\lambda
 e_1\rangle^t\|_{L^{p,q}}\lesssim\lambda^{-t}\lambda^t\|V_\f
\f\la x\ra^{-t}\|_{L^{p,q}}\lesssim1.
\end{align*}
 The last inequality follows from the fact that the weight $\la\cdot\ra^t$ is
$\la\cdot\ra^{-t}$-moderate which implies that
$\langle x+\lambda
 e_1\rangle^t\lesssim
 \lambda^t\langle
 x\rangle^{-t}$. This proves the first part of the Lemma. Let us now
 estimate
 $\|f_\lambda\|_{\M{p,q}{t,0}}$
 from below. We have
 \[
 f_\lambda(x)=\lambda^{-t}\f_\lambda(x-e_1).
 \]
 Hence, by arguing as above and using  \eqref{aggiunta.0},  we have
 \begin{align*}
 \|f_\lambda\|_{\M{p,q}{t,0}}&
 \asymp\lambda^{-t}\|V_\f
 \f_\lambda (x,\omega)\langle
 x+e_1\rangle^t\|_{L^{p,q}}\\
 &\gtrsim \lambda^{-t} \lambda^{d(\frac1q-1)} \Bigg(\int
 e^{-\pi p|x|^2} \langle
 x+e_1\rangle^{pt} dx\Bigg)^{\frac1p} \gtrsim
 \lambda^{-t+d(\frac1q-1)},
 \end{align*}
which concludes the proof.
\end{proof}

\begin{lemma}\label{istar1}
Let $1\leq p, q \leq \infty$,  $\epsilon>0$, $t\in\R$, and $\lambda>1 $.  Moreover, assume
that $(1/p,1/q)\in I^*_1$.

\noindent a) If $t\geq 0$, define
$$
f(x)=\sum_{\ell\neq 0}|\ell|^{-d/p-\epsilon}e^{2\pi i \lambda^{-1}\ell\cdot
x}\varphi(x)=\sum_{\ell\neq 0}|\ell|^{-d/p-\epsilon}M_{\lambda^{-1}\ell}\varphi(x),\quad
\mbox{in}\,\, \cS'(\rd).$$
Then there exists a constant $C>0$  such that
$\|f\|_{\M{p,q}{t,0}}\leq C $, uniformly with respect to $\lambda$. Moreover,

\begin{equation}\label{Gaugf}
\|f_{\lambda}\|_{\M{p,q}{t,0}}\gtrsim\lambda^{-d/p - \epsilon}, \qquad  \forall \,
\lambda > 1.
\end{equation}

\noindent b) If $t\leq 0$ define
$$
f(x)=\sum_{k\neq 0}|k|^{-d/p-\epsilon-t}\varphi(x-k)=\sum_{k\neq
0}|k|^{-d/p-\epsilon-t}T_{k}\varphi(x),\quad  \mbox{in}\,\, \cS'(\rd).$$  Then there exists a constant $C>0$  such that
$\|f\|_{\M{p,q}{t,0}}\leq C $, uniformly with respect to $\lambda$. Moreover,
\begin{equation}\label{Gaugffreq}
\|f_{\lambda}\|_{\M{p,q}{t,0}}\gtrsim  \lambda^{-d/p - \epsilon-t}, \qquad \forall \,
\lambda > 1.
\end{equation}
\end{lemma}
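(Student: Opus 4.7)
The plan is to handle (a) and (b) by explicit constructions that saturate, up to $\varepsilon$, the upper bound in Theorem \ref{xdil} for $(1/p,1/q) \in I_1^*$ (where $\mu_1 = -1/p$). For part (a), I would first confirm that $f$ is a well-defined tempered distribution: since the coefficients $|\ell|^{-d/p-\epsilon}$ decay polynomially and $\{M_{\lambda^{-1}\ell}\varphi\}$ is uniformly bounded in $\cS$-seminorms (up to polynomial factors in $\ell$), the series converges in $\cS'(\rd)$. For the uniform upper bound $\|f\|_{\M{p,q}{t,0}}\leq C$, I would use the STFT expansion
\[
V_\varphi f(x,\omega) = \sum_{\ell \neq 0}|\ell|^{-d/p-\epsilon}\, V_\varphi\varphi(x,\omega-\lambda^{-1}\ell),
\]
which follows from the covariance $V_\varphi(M_\eta g)(x,\omega) = V_\varphi g(x,\omega-\eta)$, together with the explicit Gaussian form $|V_\varphi\varphi(x,\omega)| = 2^{-d/2}e^{-\pi(|x|^2+|\omega|^2)/2}$. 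The $x$-Gaussian absorbs the polynomial weight $v_t$ (since $t\geq 0$), and what remains is an $L^q$-integral in $\omega$ of a sum of translated Gaussians on the lattice $\lambda^{-1}\zd$ with coefficients $|\ell|^{-d/p-\epsilon}$, which stays uniformly bounded in $\lambda$ thanks to $(1/p,1/q)\in I_1^*$ and $\varepsilon > 0$.

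For the lower bound $\|f_\lambda\|_{\M{p,q}{t,0}} \gtrsim \lambda^{-d/p-\epsilon}$, I would note that the dilation turns fractional frequencies into integer ones: $f_\lambda = \sum_\ell |\ell|^{-d/p-\epsilon} M_\ell \varphi_\lambda$, so
\[
V_\varphi f_\lambda(x,\omega) = \sum_{\ell \neq 0} |\ell|^{-d/p-\epsilon}\, V_\varphi\varphi_\lambda(x,\omega-\ell),
\]
with $V_\varphi\varphi_\lambda$ given by \eqref{aggiunta.0}. At $(x,\omega)=(0,0)$ all terms align constructively, giving
\[
|V_\varphi f_\lambda(0,0)| \asymp \lambda^{-d} \sum_{\ell \neq 0}|\ell|^{-d/p-\epsilon} e^{-\pi|\ell|^2/\lambda^2} \asymp \lambda^{-d/p-\epsilon},
\]
since the Gaussian cuts the sum at $|\ell|\lesssim \lambda$ yielding a Riemann-sum of order $\lambda^{d-d/p-\epsilon}$. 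By continuity of $V_\varphi f_\lambda$ the same pointwise lower bound persists on a fixed-size neighborhood of the origin, and integrating against $v_t(x)^p$ over that region produces the claimed lower bound on the $\M{p,q}{t,0}$-norm.

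Part (b) is proved analogously with translations in place of modulations. The extra factor $|k|^{-t}$ in the coefficient (a non-negative power since $t\leq 0$) is built in to compensate for the spatial weight $v_t$: $T_k\varphi$ is localized near $x=k$, so $\langle k\rangle^t$ enters naturally in $\|T_k\varphi\|_{\M{p,q}{t,0}}$. The dilation gives $f_\lambda = \sum_k |k|^{-d/p-\epsilon-t} T_{k/\lambda}\varphi_\lambda$, and evaluating $V_\varphi f_\lambda$ at the origin, aided by continuity, yields the lower bound $\lambda^{-d/p-\epsilon-t}$. The main obstacle throughout is the uniform upper bound on $\|f\|_{\M{p,q}{t,0}}$: it requires precise control on a sum of narrow Gaussians on the increasingly dense lattice $\lambda^{-1}\zd$ weighted by $|\ell|^{-d/p-\epsilon}$, and the region condition $q\geq \max(p,p')$ combined with the $\varepsilon$-loss in the exponent is exactly what makes these sums uniformly summable, so this is the step where the geometry of $I_1^*$ enters the proof in an essential way.
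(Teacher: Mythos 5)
Your lower-bound argument is essentially the paper's in disguise: evaluating $V_\varphi f_\lambda$ at the origin is literally the pairing $\langle f_\lambda,\varphi\rangle$, and the paper converts the resulting positive sum $\lambda^{-d}\sum_{0<|\ell|<\lambda}|\ell|^{-d/p-\epsilon}\asymp\lambda^{-d/p-\epsilon}$ into a norm bound by duality, $\|f_\lambda\|_{\M{p,q}{t,0}}\gtrsim \|\varphi\|_{\M{p',q'}{-t,0}}^{-2}\,|\langle f_\lambda,\varphi\rangle|$, rather than by integrating over a neighborhood of $(0,0)$. The duality route is the better one: your "continuity on a fixed-size neighborhood" step would require checking, uniformly in $\lambda$, that the phases of $V_\varphi\varphi_\lambda(x,\omega-\ell)$ (which involve $x\cdot\ell$ with $|\ell|$ as large as $\lambda$) do not destroy the constructive interference; this can be salvaged here, but it is not free and you do not address it.

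The genuine gap is in the uniform upper bound for part (a) --- and you have put your finger on exactly the step you then fail to prove. After Minkowski's inequality in $x$, your outline reduces to showing that $\bigl\|\sum_{\ell\neq0}|\ell|^{-d/p-\epsilon}e^{-\pi|\omega-\lambda^{-1}\ell|^2/2}\bigr\|_{L^q_\omega}$ is bounded uniformly in $\lambda>1$, and this is false for $p>1$: the Gaussian bumps have fixed width while the lattice spacing is $\lambda^{-1}$, so near $\omega=0$ about $\lambda^{d}$ of them pile up and the sum is $\asymp\sum_{0<|\ell|\lesssim\lambda}|\ell|^{-d/p-\epsilon}\asymp\lambda^{d(1-1/p)-\epsilon}$, a growth the $\omega$-integration does not remove; nor is there cancellation to recover before the triangle inequality, since the Gram entries $\langle M_{\lambda^{-1}\ell}\varphi,M_{\lambda^{-1}\ell'}\varphi\rangle=2^{-d/2}e^{-\pi|\ell-\ell'|^2/(2\lambda^2)}$ are all positive. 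The paper does not argue this way: it invokes the Banach-frame characterization of $\M{p,q}{t,0}$ with respect to $\mathcal{G}(\varphi,1,\lambda^{-1})$ (Proposition~\ref{gabframe}), so the upper bound becomes the purely discrete statement $\|c\|_{\ell^{p,q}_{t,0}}=\bigl(\sum_{\ell\neq0}|\ell|^{-q(d/p+\epsilon)}\bigr)^{1/q}<\infty$, finite because $q\geq p$ on $I_1^*$. This is where the geometry of $I_1^*$ actually enters --- as an $\ell^q$ condition on the frame coefficients, not as an $L^q$ condition on overlapping bumps --- and the whole burden of uniformity in $\lambda$ is carried by the frame characterization, not by the summability you assert. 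Your direct computation is therefore not a substitute for this step, and the decisive half of part (a) remains unproved. (Part (b) is easier than you suggest: there $f$ does not depend on $\lambda$ at all, so the uniform bound on $\|f\|_{\M{p,q}{t,0}}$ is a single fixed computation, and only the lower bound for $f_\lambda$ requires work.)
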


\begin{proof}We only prove part $a)$ as part $b)$ is obtained similarly.
We use Proposition~\ref{gabframe} to prove that $f$ defined in the lemma belongs to $\M{p,q}{t,0}$. 
Indeed, $\mathcal{G}(\varphi, 1, \lambda^{-1})$ is a Gabor frame, and the coefficients of $f$ in this frame are given by 
$c_{k,\ell}=\delta_{k,0}|\ell|^{-d/p-\epsilon}$ if $\ell\neq 0$ and $c_{0,0}=0.$
It is clear that 
$$\|c_{k,\ell}\|_{\ell^{p,q}_{t,0}}=\bigg(\sum_{\ell\in \zd}\bigg(\sum_{k\in
\zd}|c_{k,\ell}|^{p}\langle k
\rangle^{pt}\bigg)^{q/p}\bigg)^{1/q}=\bigg(\sum_{\ell\neq
0}|\ell|^{q(-d/p-\epsilon)}\bigg)^{1/q} < \infty,$$ because $q/p\geq 1$. Thus, $f \in \M{p,q}{t,0}$ with uniform norm (with respect to $\lambda$).

Given $\lambda >1,$ we have
$$\|f_{\lambda}\|_{\M{p,q}{t,0}}=\sup_{\|g\|_{\M{p',q'}{-t,
0}}=1}|\ip{f_{\lambda}}{g}|\geq \|\varphi\|_{\M{p',
q'}{-t,0}}^{-2}|\ip{f_{\lambda}}{\varphi}|.$$
Using relation~\eqref{aggiunta.0},
$$\ip{f_{\lambda}}{\varphi}=\sum_{\ell\neq
0}|\ell|^{-d/p-\epsilon}V_{\varphi}\varphi_{\lambda}(0, \ell)=\sum_{\ell\neq
0}|\ell|^{-d/p-\epsilon}(1+\lambda^2)^{-d/2}e^{\tfrac{-\pi
|\ell|^{2}}{\lambda^{2}+1}}.$$
Therefore, if $\lambda >1$,
\begin{align*}
\|f_{\lambda}\|_{\M{p,q}{t,0}} & \geq C\sum_{\ell\neq
0}|\ell|^{-d/p-\epsilon}(1+\lambda^2)^{-d/2}e^{\tfrac{-\pi
|\ell|^{2}}{\lambda^{2}+1}}\\
& \geq C \lambda^{-d}  \sum_{\ell\neq 0}|\ell|^{-d/p-\epsilon}e^{\tfrac{-\pi
|\ell|^{2}}{\lambda^{2}+1}}\\
& \geq C \lambda^{-d}  \sum_{0< |\ell| <
\lambda}|\ell|^{-d/p-\epsilon}e^{\tfrac{-\pi |\ell|^{2}}{\lambda^{2}+1}}\\
& \geq C \lambda^{-d} \lambda^{-d/p - \epsilon} \sum_{0< |\ell| < \lambda }e^{-\pi} \\
& \geq C \lambda^{-d} \lambda^{-d/p - \epsilon} e^{-\pi}
\lambda^{d}=C\lambda^{-d/p+\epsilon},
\end{align*} from which the proof follows.
\end{proof}

The next results extend \cite[Lemma 3.9]{sugimototomita} and \cite[Lemma
3.10 ]{sugimototomita}. 

\begin{lemma}\label{lemm1} Let $1\leq p,q \leq\infty$,   $t\geq 0$,
$\epsilon>0$. Suppose that $\psi\in \cS(\rd)$ satisfy $\supp\psi\subset [-1/2,1/2]^d$
and $\psi=1$ on $[-1/4,1/4]^d$.

\noindent a) If $1\leq q < \infty$, define
\begin{equation}\label{lem1ex1}
f(y)=\sum_{k\in\zd\setminus\{0\}} | k|^{-\frac{d}q-\epsilon-t} M_{k}T_k
\psi(y),\quad\mbox{in}\,\,\cS'(\rd).
\end{equation}
Then, $f\in \M{p,q}{t,0}(\rd)$ and
\begin{equation}\label{est1}
\|f_\lambda\|_{\M{p,q}{t,0}}\gtrsim\,
\lambda^{-d(\frac2p-\frac1q)+\epsilon-t},\,\quad\forall\,\,0<\lambda\leq
1.
\end{equation}

\noindent b)  If $q=\infty$, let
\begin{equation}\label{es21}
f(y)=\sum_{k\not=0} |k|^{-t}M_{k}T_k
\psi(y),\quad\mbox{in}\,\,\cS'(\rd).
\end{equation}
Then $f\in \M{p,\infty}{t,0}$ and
\begin{equation}\label{est2}\|f_\lambda\|_{\mathcal{M}^{p,\infty}_{t,0}}\gtrsim
\lambda^{-
\frac{2d}p-t},\quad\forall\,\, 0<\lambda\leq1.
\end{equation}

\end{lemma}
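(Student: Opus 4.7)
My plan is to prove (a) and (b) together, handling (a) as the main case, via explicit STFT computations with the Gaussian window $\varphi(y)=e^{-\pi|y|^2}$. First, to see that $f\in\M{p,q}{t,0}$ with norm uniformly bounded in $\lambda$ (note that $f$ itself does not depend on $\lambda$), I compute
\[
V_\varphi f(x,\omega)=\sum_{k\neq 0} c_k\, e^{-2\pi i(\omega-k)\cdot k}\,V_\varphi\psi(x-k,\omega-k),
\]
with $c_k=|k|^{-d/q-\epsilon-t}$ in (a) and $c_k=|k|^{-t}$ in (b). Since $V_\varphi\psi\in\cS(\rdd)$, the rapid decay of $V_\varphi\psi$ and the essentially disjoint supports of the atoms reduce the $\M{p,q}{t,0}$-norm of $f$ to a constant multiple of the weighted sequence norm of $\{c_k\langle k\rangle^t\}_{k\neq 0}$, which equals $\bigl(\sum_{k\neq 0}|k|^{-d-q\epsilon}\bigr)^{1/q}<\infty$ in case (a) and is uniformly bounded in case (b).

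For the lower bound, a parallel calculation after the dilation $y\mapsto\lambda y$ yields
\[
V_\varphi f_\lambda(x,\omega)=\sum_{k\neq 0}c_k\, e^{-2\pi i(\omega-\lambda k)\cdot k/\lambda}\,V_\varphi\psi_\lambda(x-k/\lambda,\omega-\lambda k),
\]
where $\psi_\lambda(y)=\psi(\lambda y)$. The crucial geometric fact is that $\psi_\lambda\equiv 1$ on $[-1/(4\lambda),1/(4\lambda)]^d$, so for $x$ in the inner cube $Q_k:=k/\lambda+[-(1/(4\lambda)-1),1/(4\lambda)-1]^d$ (assume $\lambda<1/4$), the unit-scale Gaussian $\varphi(\cdot-(x-k/\lambda))$ lies entirely in the region where $\psi_\lambda=1$, and up to an error super-exponentially small in $\lambda^{-1}$,
\[
V_\varphi\psi_\lambda(x-k/\lambda,\omega-\lambda k)\approx e^{-\pi|\omega-\lambda k|^2-2\pi i(x-k/\lambda)\cdot(\omega-\lambda k)}.
\]
For $k'\neq k$ and $x\in Q_k$, the shift $x-k'/\lambda$ is at distance $\gtrsim|k-k'|/\lambda\gg 1$ from the support of $\psi_\lambda$, so a standard Gaussian tail bound forces $V_\varphi\psi_\lambda(x-k'/\lambda,\omega-\lambda k')$ to be super-exponentially small in $\lambda^{-1}$. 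Combining, on $Q_k$,
\[
|V_\varphi f_\lambda(x,\omega)|\gtrsim|c_k|\,e^{-\pi|\omega-\lambda k|^2}.
\]

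To conclude (a), I restrict $\omega$ to the fixed box $[-1,1]^d$. On $Q_k$ one has $\langle x\rangle\asymp|k|/\lambda$ (for $|k|\geq 1$) and $|Q_k|\asymp\lambda^{-d}$, and the $Q_k$ are pairwise disjoint. Hence
\[
\int_{\rd}|V_\varphi f_\lambda(x,\omega)|^p\langle x\rangle^{pt}\,dx\;\gtrsim\;\lambda^{-d-pt}\sum_{1\leq|k|\lesssim 1/\lambda}|k|^{-pd/q-p\epsilon},
\]
the cutoff $|k|\lesssim 1/\lambda$ coming from the constraint $|\omega-\lambda k|=O(1)$. For $\epsilon$ small enough that $pd/q+p\epsilon<d$ (possible exactly when $p<q$), the partial sum is $\asymp\lambda^{-d+pd/q+p\epsilon}$; raising to the $q/p$-power and integrating over $\omega\in[-1,1]^d$ yields $\|f_\lambda\|_{\M{p,q}{t,0}}^q\gtrsim\lambda^{-2dq/p+d+q\epsilon-qt}$, which is exactly \eqref{est1}. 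In the complementary range (including $p\geq q$, and also $p<q$ with $\epsilon\geq d/p-d/q$) the sum converges and I obtain the stronger $\|f_\lambda\|\gtrsim\lambda^{-d/p-t}$, which already dominates the target. Part (b) follows from the same computation at $\omega=0$, combined with the Riemann-sum estimate $\sum_{k\neq 0}e^{-\pi p\lambda^2|k|^2}\asymp\lambda^{-d}$, giving $\|f_\lambda\|_{\M{p,\infty}{t,0}}\gtrsim\lambda^{-2d/p-t}$.

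The main technical obstacle is the off-diagonal Gaussian tail estimate for $V_\varphi\psi_\lambda$, requiring careful comparison between the unit scale of the window $\varphi$ and the scale $1/\lambda$ of the support of $\psi_\lambda$, together with verification that the $k$-th term dominates $V_\varphi f_\lambda$ on $Q_k$ with negligible error. Once this localization is in place, the rest is a bookkeeping exercise using disjointness of the cubes $Q_k$ and a standard partial-sum estimate.
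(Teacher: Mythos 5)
Your proof is correct in outline and reaches the right exponents, but it takes a genuinely different route from the paper's. The paper does not use the Gaussian window: it takes a window $g\in\cS(\rd)$ with $\supp g\subset[-1/8,1/8]^d$ and $|\hat g|\geq 1$ on $[-2,2]^d$, and works with the identity $V_gf_\lambda(x,\omega)=\lambda^{-d}V_{g_{\lambda^{-1}}}f(\lambda x,\lambda^{-1}\omega)$ in the undilated coordinates. The compact support makes your "crucial geometric fact" exact rather than approximate: for $x\in\ell+[-1/8,1/8]^d$ the dilated window $g((\cdot-x)/\lambda)$ is supported in $\ell+[-1/4,1/4]^d$, so it meets the support of only the $k=\ell$ atom, and there $\psi(\cdot-\ell)\equiv 1$, whence the inner integral is literally $\lambda^d\,e^{-2\pi i(\omega-\ell)\cdot x}\,\overline{\hat g(-\lambda(\omega-\ell))}$ with no off-diagonal or tail error terms at all; the count $|\ell|\leq 1/\lambda$ then comes from $|\hat g|\geq 1$ on $[-2,2]^d$. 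Your Gaussian-window version buys nothing here and costs you the almost-orthogonality analysis you correctly identify as the main obstacle. Two points in it need repair. First, the replacement error $V_\varphi\psi_\lambda(x-k/\lambda,\omega-\lambda k)-e^{-2\pi i(\cdots)}e^{-\pi|\omega-\lambda k|^2}$ is \emph{not} super-exponentially small in $\lambda^{-1}$ uniformly on your $Q_k$: near the boundary of $Q_k$ the margin to the set where $\psi_\lambda\neq 1$ is the fixed constant $1$, so the error is only $O(e^{-c})$, which need not be dominated by $e^{-\pi|\omega-\lambda k|^2}$ for all $d$ and all $|\omega-\lambda k|\lesssim 1$; you must shrink $Q_k$ by a large fixed constant $R=R(d,p)$ and restrict $|\omega-\lambda k|$ accordingly (this does not change $|Q_k|\asymp\lambda^{-d}$ or any exponent). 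Second, the membership $f\in\M{p,q}{t,0}$ with a $\lambda$-independent bound is asserted via "essentially disjoint supports," but the STFTs of the atoms overlap; making this rigorous requires the decay estimate $|V_\varphi\psi(x-k,\omega-k)|\lesssim_N\langle x-k\rangle^{-N}\langle\omega-k\rangle^{-N}$ followed by a discrete convolution/Young-type bound in $\ell^{p,q}_{t,0}$ --- exactly the computation the paper performs by integrating by parts with $(I-\Delta_y)^d$ and applying Young's inequality. Your case split on the convergence of $\sum_{1\leq|k|\leq 1/\lambda}|k|^{-pd/q-p\epsilon}$ is consistent and correct (the paper avoids it by the cruder bound $|k|^{-pd/q-p\epsilon}\geq\lambda^{pd/q+p\epsilon}$ on that range, which gives the stated exponent in all cases), and your treatment of part b) at $\omega=0$ is fine.
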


\begin{proof}
We only prove part $a)$, i.e., the case  $1\leq q<\infty$ as the case $q=\infty$ is
proved in a similar fashion.

Let  $g \in \cS(\rd)$ satisfy $\supp g\subset
[-1/8,1/8]^d$,  and $|\hat{g}|\geq 1$ on $[-2,2]^d$. The proof of each part of the
Lemma is based on the appropriate estimate for $V_{g}f$.

Let us first show that $f\in \M{p,q}{t,0}(\rd)$.
We have
\begin{align*}&\left|\intrd e^{-2\pi i (\o-k)y}\psi(y-k)g(y-x)dy\right|\\
\quad&=\left|\intrd \psi(y-k)g(y-x)\{(1+|\o-k|^2)^{-d}(I-\Delta_y)^d e^{-2\pi i
(\o-k)y}\} \,dy\right|\\
\quad&=\frac1{(1+|\o-k|^2)^d}\left|\sum_{|\b_1+\b_2|\leq 2d}C_{\b_1,\b_2}\intrd
\partial^{\b_1}(T_k\psi)(y)(\partial^{\b_2} g)(x-y)e^{-2\pi i (\o-k)y} dy\right|\\
\quad&\leq \frac
C{(1+|\o-k|^2)^d}\sum_{|\b_1+\b_2|\leq
2d}(|T_k(\partial^{\b_1}\psi)|\ast|\partial^{\b_2}g|)(x).
\end{align*}

Hence
 \begin{align}&\|f\|_{\M{p,q}{t,0}}\asymp \|V_g f\|_{L^{p,q}_{t,0}} \notag \\
 \quad&=\left(\intrd\left(\intrd\left|\sum_{k\not=0}|k|^{-\frac{d}q-\epsilon-t}\intrd
e^{-2\pi i (\o -k)y}\psi(y-k)g(y-x)\,dy\right|^p\la x\ra^{tp}
dx\right)^{\frac{q}p}\,d\o\right)^{\frac1q} \notag \\
\quad&\leq C
\Big(\intrd\big(\sum_{k\not=0}|k|^{-\frac{d}q-\epsilon-t}\frac
1{(1+|\o-k|^2)^d}\sum_{|\b_1+\b_2|\leq
2d}\| |T_k(\partial^{\b_1}\psi)|\ast
|\partial^{\b_2}g| \|_{L^p_t}\big)^q
d\o\Big)^{\frac1q}.
\label{1stkeyest}
\end{align}

 Using Young's inequality: $\| |T_k(\partial^{\b_1}\psi)|\ast|\partial^{\b_2}g|
\|_{L^p_t}\lesssim \|
T_k\partial^{\b_1}\psi\|_{L^1_t}\,\|\partial^{\b_2}g\|_{L^p_t}$,    and the
estimate $\|T_k\partial^{\b_1}\psi\|_{L^1_t}\leq \la
k\ra^t\|\partial^{\b_1}\psi\|_{L^1_t}$, we can control~\eqref{1stkeyest} by

 \begin{align}
 C  & \left(\intrd\left(\sum_{k\not=0}|k|^{-\frac{d}q-\epsilon}\frac
1{(1+|\o-k|^2)^d}\right)^q d\o\right)^{\frac1q} \notag \\
 \quad& \leq C
\left(\sum_{\ell\in\zd}\int_{\ell+[-1/2,1/2]^d}\left(\sum_{k\not=0}|k|^{-\frac{d}q-\epsilon}\frac
1{(1+|\o-k|^2)^d}\right)^q d\o\right)^{\frac1q} \notag \\
 \quad& \leq \tilde{C}
\left(\sum_{\ell\in\zd}\left(\sum_{k\not=0}|k|^{-\frac{d}q-\epsilon}\frac
1{(1+|\ell-k|^2)^d}\right)^q\right)^{\frac1q}\notag \\
 \quad &= \tilde{C} \Big\| |k|^{-\frac{d}q-\epsilon}\ast
\frac1{(1+|k|^2)^d}\Big\|_{\ell^q} <\infty,
\label{2nkeyest}
 \end{align}

 since $\{|k|^{-\frac{d}q-\epsilon}\}_{k\not=0}\in\ell^q.$\par

Next, we prove~\eqref{est1}.  Since $V_g f_\lambda\phas
=\lambda^{-d}V_{g_{\lambda^{-1}}} f(\lambda x,\lambda^{-1}\o) $, we obtain
\begin{equation*}
\|V_g
f_\lambda\|_{L^{p,q}_{t,0}}=\lambda^{-d(1+\frac1p-\frac1q)}\left(\intrd\left(\intrd|V_{g_{\lambda^{-1}}}
f\phas|^p\la \lambda^{-1} x\ra^{p t}\,dx\right)^{\frac{q}p}d\o\right)^{\frac1q}.
\end{equation*}
Observe that $$\la \lambda^{-1} (x+\ell)\ra\geq \lambda^{-1} \la \lambda^{-1} \ell
\ra \geq \la \lambda^{-1} \ell \ra$$ and $\supp
g((\cdot -x)/\lambda)\subset \ell+[-1/4,1/4]^d$, for all $0< \lambda\leq1$,
$x\in\ell+[-1/8,1/8]^d$. Since $\supp \psi(\cdot-k)\subset k+[-1/2,1/2]^d$ and
$\psi(t-k)=1$ if $t\in k+[-1/4,1/4]^d$, the inner integral can be estimated as
follows:
\begin{align*} &\left(\intrd |V_{g_{\lambda^{-1}}} f\phas|^p\la \lambda^{-1}
x\ra^{pt}\,dx\right)^{\frac1p}\\
 \quad &\geq\Big(\sum_{\ell\not=0}\int_{\ell+[-1/8,1/8]^d}\Big|\sum_{k\not=0}
|k|^{-d/q-\epsilon-t}\intrd e^{-2\pi i (\o
-k)y}\psi(y-k)\overline{g(\frac{y-x}{\lambda})}\,dy\Big|^p \la \lambda^{-1}
x\ra^{pt}\,d x\Big)^{\frac1p}\\
 \quad&\gtrsim\Big(\sum_{\ell\not=0}(|\ell|^{-\frac{d}q-\epsilon-t}\lambda^d
|\hat{g}(-\lambda(\o-\ell))|\,\lambda^{-t}|\ell|^t)^p\Big)^{\frac1p}\\
 \quad&\gtrsim\Big(\sum_{\ell\not=0}(|\ell|^{-\frac{d}q-\epsilon}\lambda^{d-t}
|\hat{g}(-\lambda(\o-\ell))|)^p\Big)^{\frac1p}.
 \end{align*}
Consequently,
\begin{align*}
\|V_g f_\lambda\|_{L^{p,q}_{t,0}}
&=\lambda^{-d(1+\frac1p-\frac1q)}\left(\intrd\left(\intrd|V_{g_{\lambda^{-1}}}
f\phas|^p\la \lambda^{-1} x\ra^{p t}\,dx\right)^{\frac qp}d\o\right)^{\frac1q}\\
& \gtrsim \lambda^{-d(1+\frac1p-\frac1q)}\,
\left(\intrd\Big(\sum_{\ell\not=0}(|\ell|^{-\frac dq-\epsilon}\lambda^{d-t}
|\hat{g}(-\lambda(\o-\ell))|)^p\Big)^{\frac qp}d\o\right)^{\frac1q}\\
& = \lambda^{d-t - d/q}\, \lambda^{-d(1+\frac1p-\frac1q)}\,
\left(\intrd\Big(\sum_{\ell\not=0}(|\ell|^{-\frac dq-\epsilon}
|\hat{g}(\o+\lambda \ell)|)^p\Big)^{\frac qp}d\o\right)^{\frac1q}\\
& \gtrsim \lambda^{-t -\frac dp}\, \left(\int_{|\o|\leq 1}\Big(\sum_{|\ell|\leq
\tfrac{1}{\lambda}}(|\ell|^{-\frac dq-\epsilon}
|\hat{g}(\o+\lambda \ell)|)^p\Big)^{\frac qp}d\o\right)^{\frac1q}\\
& \gtrsim \lambda^{-t -\frac dp}\, \left(\int_{|\o|\leq 1}\Big(\sum_{|\ell|\leq
\tfrac{1}{\lambda}}(|\ell|^{-\frac dq-\epsilon})^p\Big)^{\frac
qp}d\o\right)^{\frac1q}\\
& = \lambda^{-t -\frac dp}\, \Big(\sum_{|\ell|\leq
\tfrac{1}{\lambda}}(|\ell|^{-\frac dq-\epsilon})^p\Big)^{\frac1p}\gtrsim \lambda^{-t
-\frac dp} \, \lambda^{\frac dq + \epsilon} \Big(\sum_{|\ell|\leq
\tfrac{1}{\lambda}}\Big)^{\frac1p} \gtrsim \lambda^{-t -2\frac dp+ \frac dq +
\epsilon},
\end{align*}
which completes the proof.
\end{proof}

\begin{lemma}\label{i3negative}
Let $1\leq p, q \leq \infty$ be such that $(1/p,1/q)\in I_3$. Let
$\epsilon >0$, $t<0$, and $ 0< \lambda<1$.

\noindent a) If $t\leq -d$ define
\begin{equation}\label{i3neginfd}
f(x)=\lambda^{\tfrac{d}{q} -\tfrac{2d}{p} +2d}\sum_{k\neq
0}|k|^{-\tfrac{\epsilon}{2}}T_{\lambda^{2}k}\varphi(x),\quad \mbox{in}\,\, \cS'(\rd).
\end{equation}
Then there exists a constant $C>0$  such that
$\|f\|_{\M{p,q}{t,0}}\leq C $, uniformly with respect to $\lambda$. Moreover,
\begin{equation}\label{i3negest}\nm{f_{\lambda}}{\M{p,q}{t,0}}\gtrsim \lambda^{d\mu_{2}(p,q)+\epsilon},\quad\forall\,\,0<\lambda<1.
\end{equation}

\noindent b) If $-d < t< 0$, choose a positive integer $N$ large enough such that
$\tfrac{1}{N} <\tfrac{p-1}{2}-\tfrac{pt}{2d}$. Define
\begin{equation}\label{i3negd0}
f(x)=\lambda^{\tfrac{d}{q}}\sum_{k\neq 0}|k|^{d(\tfrac{2}{Np}-1)
-\tfrac{\epsilon}{N}}T_{\lambda^{N}k}\varphi(x),\quad\mbox{in}\,\,\cS'(\rd).
\end{equation}
Then the conclusions of part a) still hold. 
\end{lemma}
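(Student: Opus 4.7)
The plan for proving Lemma~\ref{i3negative} mirrors that of Lemmas~\ref{istar1} and~\ref{lemm1}: verify the uniform upper bound $\|f\|_{\M{p,q}{t,0}}\leq C$ via the Gabor frame characterization of modulation spaces from Proposition~\ref{gabframe}, and establish the lower bound on $\|f_\lambda\|_{\M{p,q}{t,0}}$ by pairing $f_\lambda$ against the Gaussian $\varphi$ through duality. We outline part (a); part (b) requires only a careful re-calibration of exponents.

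For the upper bound in part (a), observe that $0<\lambda^2<1$, so $\mathcal{G}(\varphi,\lambda^2,1)$ is a Gabor frame for $\M{p,q}{t,0}$ by Proposition~\ref{gabframe}. The function $f$ already has the trivial Gabor expansion with coefficients $c_{k,0}=\lambda^{d\mu_2(p,q)+2d}|k|^{-\epsilon/2}$ for $k\neq 0$ and $c_{k,\ell}=0$ otherwise, so the proposition gives
\[
\|f\|_{\M{p,q}{t,0}}\lesssim\|c\|_{\ell^{p,q}_{t,0}}=\lambda^{d\mu_2(p,q)+2d}\Big(\sum_{k\neq 0}|k|^{-p\epsilon/2}\la k\ra^{pt}\Big)^{1/p}.
\]
The sum converges because $t\leq -d$, and the prefactor satisfies $d\mu_2(p,q)+2d=d(-2/p+1/q+2)\geq 0$ on $I_3$; hence $\|f\|_{\M{p,q}{t,0}}\leq C$ uniformly in $\lambda$.

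For the lower bound, since $\varphi\in\cS(\rd)\subset\M{p',q'}{-t,0}$, duality gives $\|f_\lambda\|_{\M{p,q}{t,0}}\gtrsim|\langle f_\lambda,\varphi\rangle|$. Completing the square in the Gaussian integral yields
\[
\int_{\rd}\varphi(\lambda x-\lambda^2 k)\varphi(x)\,dx=(1+\lambda^2)^{-d/2}e^{-\pi\lambda^4|k|^2/(1+\lambda^2)},
\]
which is bounded below by a positive constant whenever $0<|k|\leq\lambda^{-2}$. Summing only over these indices,
\[
|\langle f_\lambda,\varphi\rangle|\gtrsim\lambda^{d\mu_2(p,q)+2d}\sum_{0<|k|\leq\lambda^{-2}}|k|^{-\epsilon/2}\asymp\lambda^{d\mu_2(p,q)+2d}\cdot\lambda^{-2(d-\epsilon/2)}=\lambda^{d\mu_2(p,q)+\epsilon},
\]
yielding the claimed bound.

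Part (b) proceeds analogously with shift scale $\lambda^N k$ and coefficient $|k|^{d(2/(Np)-1)-\epsilon/N}$. The hypothesis $1/N<(p-1)/2-pt/(2d)$ is precisely what guarantees convergence of the series $\sum_{k\neq 0}|k|^{p[d(2/(Np)-1)-\epsilon/N]}\la k\ra^{pt}$, so the Gabor frame argument again yields the uniform upper bound. For the lower bound, the same duality step produces $\lambda^{d/q}\sum_{0<|k|\leq\lambda^{-N}}|k|^{d(2/(Np)-1)-\epsilon/N}\asymp\lambda^{d/q}\cdot\lambda^{-2d/p+\epsilon}=\lambda^{d\mu_2(p,q)+\epsilon}$. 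The main subtlety throughout is obtaining constants uniform in $\lambda$ in the Gabor frame characterization as $\lambda\to 0$, when the frame $\mathcal{G}(\varphi,\lambda^2,1)$ becomes increasingly redundant; if this uniformity is delicate, one can bypass the frame characterization altogether and estimate $V_\varphi f$ directly via~\eqref{aggiunta.0}, using the oscillation $e^{-2\pi i\omega\lambda^2 k}$ (resp.\ $e^{-2\pi i\omega\lambda^N k}$) to control the resulting series.
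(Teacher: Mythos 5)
Your proposal follows essentially the same route as the paper's proof: the uniform upper bound via the Gabor frame $\mathcal{G}(\varphi,\lambda^2,1)$ (resp.\ $\mathcal{G}(\varphi,\lambda^N,1)$) and Proposition~\ref{gabframe}, and the lower bound by pairing $f_\lambda$ against $\varphi$, using the explicit Gaussian overlap and truncating the sum to $0<|k|\leq\lambda^{-2}$ (resp.\ $\lambda^{-N}$); all exponent bookkeeping checks out (your exponent $e^{-\pi\lambda^4|k|^2/(1+\lambda^2)}$ is in fact the correct one). The uniformity-in-$\lambda$ caveat you raise about the frame constants is present in the paper's argument as well, so it is not a point of divergence.
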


\begin{proof}
\noindent $a)$ For the range of $p,q$ being considered, $
\tfrac{d}{q}+2d -\tfrac{2d}{p}=d\mu_{2}(p,q)+2d\geq 0$, and so if $\lambda < 1$, then $\lambda^{\tfrac{d}{q}+2d -\tfrac{2d}{p}}< 1$.

Next, notice that $\mathcal{G}(\varphi, \lambda^{2}, 1)$ is a Gabor frame. So, to
check that $f \in \M{p,q}{t,0}$ we only need to verify that the sequence
$c=\{c_{k\ell}\}=\{|k|^{-\tfrac{\epsilon}{2}}\delta_{\ell,0}, k\neq 0\}_{k, \ell \in
\Z} \in \ell^{p,q}_{t,0}$. But, the condition $t\leq -d$ guarantees this, since
$$\nm{c}{\ell^{p,q}_{t,0}}=\lambda^{\tfrac{d}{q}+2d -\tfrac{2d}{p}}\bigg(\sum_{k\neq
0}|k|^{-p\epsilon/2}(1+|k|^{2})^{pt/2}\bigg)^{1/p} \leq C.$$

Next, as in the proof of Lemma~\ref{istar1}, we have
$$\|f_{\lambda}\|_{\M{p,q}{t,0}}=\sup_{\|g\|_{\M{p',q'}{-t,
0}}=1}|\ip{f_{\lambda}}{g}|\geq \|\varphi\|_{\M{p',
q'}{-t,0}}^{-2}|\ip{f_{\lambda}}{\varphi}|.$$
In this case,  $$\ip{f_{\lambda}}{\varphi}=\lambda^{2d+d\mu_{2}(p,q)}\sum_{k\neq
0}|k|^{-\epsilon/2}V_{\varphi}\varphi_{\lambda}(\lambda
k,0)=\lambda^{2d+d\mu_{2}(p,q)}\sum_{k\neq
0}|k|^{-\epsilon/2}(1+\lambda^2)^{-d/2}e^{\tfrac{-\pi
\lambda^{2}|k|^{2}}{\lambda^{2}+1}}.$$  Therefore, if $\lambda < 1$,
\begin{align*}
\|f_{\lambda}\|_{\M{p,q}{t,0}} & \geq C \lambda^{2d+d\mu_{2}(p,q)}\sum_{k\neq
0}|k|^{-\epsilon/2}(1+\lambda^2)^{-d/2}e^{\tfrac{-\pi
\lambda^{2}|k|^{2}}{\lambda^{2}+1}}\\
& \geq C  \lambda^{2d+d\mu_{2}(p,q)} \sum_{k\neq 0}|k|^{-\epsilon/2} \geq C
\lambda^{2d+d\mu_{2}(p,q)}  \sum_{0< |k| <
\tfrac{1}{\lambda^{2}}}|k|^{-\epsilon/2}\\
& \geq C \lambda^{2d+d\mu_{2}(p,q)}  \lambda^{ \epsilon}
\lambda^{-2d}=C\lambda^{d\mu_{2}(p,q)+\epsilon}
\end{align*}  which completes the proof of part $a)$.

\noindent  $b)$ If $p\geq 1$, the assumptions $-d<t< 0$ and
$\frac1 N < \tfrac{p-1}{2}-\tfrac{pt}{2d}$  are sufficient to
prove that $f \in \M{p,q}{t,0}.$ In addition, the main estimate is
that
\begin{align*}
\|f_{\lambda}\|_{\M{p,q}{t,0}} & \geq C \lambda^{d/q} \sum_{k\neq
0}|k|^{d(\tfrac{2}{N p}-1) -\tfrac{\epsilon}{N}}(1+\lambda^2)^{-d/2}e^{\tfrac{-\pi
\lambda^{2(N-1)}|k|^{2}}{\lambda^{2}+1}}\\
& \geq C  \lambda^{d/q}   \sum_{0< |k| < \tfrac{1}{\lambda^{N}}}|k|^{d(\tfrac{2}{N
p}-1) -\tfrac{\epsilon}{N} }\geq C\lambda^{d\mu_{2}(p,q)+\epsilon}.
\end{align*}
\end{proof}

We now state results similar to the above lemmas when the weight is in the frequency
variable.

\begin{lemma}\label{i1} For $s\leq 0$, $0<\lambda\leq1$, consider the family of
functions
\begin{equation}\label{ModGaus}
f(x)=\lambda^{s}M_{\lambda^{-1}e_1}\f(x),\ e_1=(1,0,0,\ldots,0).
\end{equation}
Then there exists a constant $C>0$  such that
$\|f\|_{\M{p,q}{0,s}}\leq C $, uniformly with respect to $\lambda$. Moreover,
\begin{equation}\label{bo2}
\|f_\lambda\|_{\M{p,q}{0,s}}\gtrsim \lambda^{s-\frac{d}p},\quad
\forall\,\,0<\lambda\leq1.
\end{equation}
\end{lemma}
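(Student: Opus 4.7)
The plan is to mimic the proof of Lemma \ref{i2star}, but adapted to the frequency-variable weight $v_s$ and to the fact that $f$ is a modulated rather than translated Gaussian. The two key tools are the covariance relation $V_\varphi(M_{\omega_0}h)(x,\omega)=V_\varphi h(x,\omega-\omega_0)$ and the explicit formula \eqref{aggiunta.0} for $V_\varphi\varphi_\mu$. The overall strategy is to reduce both estimates to Gaussian integrals and track the $\lambda$-dependence carefully.

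For the uniform upper bound $\|f\|_{\M{p,q}{0,s}}\leq C$, I would first compute
$$|V_\varphi f(x,\omega)|=C\lambda^s e^{-\pi|x|^2/2}e^{-\pi|\omega-\lambda^{-1}e_1|^2/2}.$$
The $L^p_x$-integration produces a constant. The remaining $L^q_\omega$-integral is effectively supported where $|\omega-\lambda^{-1}e_1|\lesssim 1$; on this region $|\omega|\asymp \lambda^{-1}$, so $\langle\omega\rangle^s\asymp\lambda^{-s}$ since $s\leq 0$. This yields a factor of $\lambda^{-s}$ that exactly cancels the prefactor $\lambda^s$, proving the bound. The contribution from $|\omega-\lambda^{-1}e_1|\gg 1$ is negligible by Gaussian decay.

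For the lower bound $\|f_\lambda\|_{\M{p,q}{0,s}}\gtrsim\lambda^{s-d/p}$, I would rewrite $f_\lambda=\lambda^s M_{e_1}\varphi_\lambda$, so that \eqref{aggiunta.0} gives
$$|V_\varphi f_\lambda(x,\omega)|=C\lambda^s(\lambda^2+1)^{-d/2}e^{-\pi\lambda^2|x|^2/(\lambda^2+1)}e^{-\pi|\omega-e_1|^2/(\lambda^2+1)}.$$
The spatial Gaussian now has width $\sim\lambda^{-1}$, so its $L^p_x$-norm contributes $\lambda^{-d/p}$. The frequency concentration has been shifted back to $e_1$, where $\langle\omega\rangle\asymp 1$, so restricting the $L^q_\omega$-integration to a small neighborhood of $e_1$ already produces a contribution of order one. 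Multiplying the factors gives $\lambda^s\cdot\lambda^{-d/p}=\lambda^{s-d/p}$, as required.

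The only subtle point is the calibration: the prefactor $\lambda^s$ built into the definition of $f$ is precisely what is needed to keep $\|f\|_{\M{p,q}{0,s}}$ uniformly bounded, while being transparent after dilation, so that $\|f_\lambda\|_{\M{p,q}{0,s}}$ can still realize the extremal rate $\lambda^{s-d/p}$. Once this calibration is understood, everything reduces to routine Gaussian computations and I do not anticipate any technical obstacle.
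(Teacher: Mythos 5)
Your proposal is correct and follows essentially the same route as the paper: both rewrite $f_\lambda=\lambda^s M_{e_1}\varphi_\lambda$, invoke the covariance of the STFT under modulation together with the explicit formula \eqref{aggiunta.0}, extract the factor $\lambda^{-d/p}$ from the widened spatial Gaussian, and observe that the weight is of order one near $e_1$. The only cosmetic difference is in the uniform bound, where the paper uses the moderateness inequality $\langle\omega+\lambda^{-1}e_1\rangle^s\lesssim\lambda^{-s}\langle\omega\rangle^{-s}$ in place of your region-splitting of the $\omega$-integral; both encode the same cancellation $\lambda^s\cdot\lambda^{-s}=1$.
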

\begin{proof}
 We have
\begin{align*}
\|f\|_{\M{p,q}{0,s}}&\asymp
\|V_\f f(x,\omega)\langle
\o\rangle^s\|_{L^{p,q}}=\lambda^{s}\|V_\f
\f(x,\omega-\lambda^{-1} e_1)\langle
\o\rangle^s\|_{L^{p,q}}\\
&=\lambda^{s}\|V_\f \f(x,\omega)\langle \o+\lambda^{-1}
 e_1\rangle^s\|_{L^{p,q}}\lesssim\lambda^{s}\lambda^{-s}\|V_\f
\f\la\o\ra^{-s}\|_{L^{p,q}}\lesssim1,
\end{align*}
 where we have used again the fact that  the weight $\la\cdot\ra^s$ is
$\la\cdot\ra^{-s}$-moderate. Thus
the
 functions $f$ have norms in
 $\M{p,q}{0,s}$ uniformly
 bounded with respect to
 $\lambda$. Let us now  estimate
 $\|f_\lambda\|_{\M{p,q}{0,s}}$
 from below. We have
 \[
 f_\lambda(x)=\lambda^{s}M_{e_1}\f_\lambda(x).
 \]
 By  using  \eqref{aggiunta.0},  we obtain
 \begin{align*}
 \|f_\lambda\|_{\M{p,q}{0,s}}&=\lambda^s\|V_\f \f_{\lambda}(x,\o-e_1)\la\o
\ra^s\|_{L^{p,q}}\\
  &\gtrsim \lambda^{s-\frac{d}p} \Bigg(\int
 e^{-\pi q|\o|^2} \langle
 \o+e_1\rangle^{qs} d\o\Bigg)^{\frac1q} \gtrsim \lambda^{s-\frac{d}p},
 \end{align*}
 as desired.
 \end{proof}

\begin{lemma}\label{fistar2} Let $1\leq p, q \leq \infty$ be such that $(1/p,1/q)\in
I^*_2$. Assume that $s\leq 0$, $\epsilon >0$ and $\lambda >1 $.

\noindent a) If  $q\geq 2$ and $s\leq 0$, or $1\leq q \leq 2$ and $s\leq -d$,   define
$$
f(x)=\sum_{\ell\neq 0}|\ell|^{d(\frac1q-1) -\epsilon}e^{2\pi i \lambda^{-1}\ell\cdot
x}\varphi(x)=\sum_{\ell\neq
0}|\ell|^{d(\frac1q-1)-\epsilon}M_{\lambda^{-1}\ell}\varphi(x),\quad  \mbox{in}\,\, \cS'(\rd).$$
Then there exists a constant $C>0$  such that
$\|f\|_{\M{p,q}{0,s}}\leq C $, uniformly with respect to $\lambda$. Moreover,
\begin{equation}\label{Gaugffreqneg}
\|f_{\lambda}\|_{\M{p,q}{0,s}}\gtrsim \lambda^{d(\frac1q -1)-\epsilon},\,\quad
\forall \lambda > 1.
\end{equation}

\noindent b) If  $1\leq q \leq 2$ and $-d< s <0$, choose a positive integer $N$
such that $\tfrac{1}{N} < -\tfrac{sq}{d}$, and define

$$
f(x)=\sum_{\ell\neq 0}|\ell|^{d(\tfrac{1}{Nq}-1) -\epsilon/N}e^{2\pi i
\lambda^{-N}\ell\cdot x}\varphi(x)=\sum_{\ell\neq
0}|\ell|^{d(\tfrac{1}{Nq}-1)-\epsilon/N}M_{\lambda^{-N}\ell}\varphi(x), \quad \mbox{in}\,\,
\cS'(\rd).$$  Then the conclusions of part a) still hold. 
\end{lemma}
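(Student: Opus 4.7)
The plan is to adapt the two-step structure used in Lemmas \ref{istar1} and \ref{i3negative} to the present frequency-weighted setting. The key observation is that $\mathcal{G}(\varphi, 1, \lambda^{-1})$ is a Gabor frame for $L^{2}(\rd)$ whenever $\lambda > 1$, by Proposition \ref{gabframe} together with the Fourier-isometry remark preceding it; analogously, $\mathcal{G}(\varphi, 1, \lambda^{-N})$ is a Gabor frame for any $N \geq 1$. In part (a), the defining sum for $f$ is already a sparse Gabor expansion with respect to $\mathcal{G}(\varphi, 1, \lambda^{-1})$, with coefficients $c_{k,\ell} = \delta_{k,0}|\ell|^{d(1/q-1) - \epsilon}$ concentrated on $\{k=0\}\times(\Z\setminus\{0\})$; in part (b) the same holds with respect to the finer frame, with the modified exponent $d(1/(Nq)-1) - \epsilon/N$.

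First I would establish the uniform upper bound $\|f\|_{\M{p,q}{0,s}} \leq C$. By Proposition \ref{gabframe}, this reduces to verifying convergence of
\[
\|c\|_{\ell^{p,q}_{0,s}}^{q} \;=\; \sum_{\ell \neq 0} |\ell|^{q[d(1/q-1) - \epsilon]} \langle \ell\rangle^{qs},
\]
uniformly in $\lambda$. When $q \geq 2$ and $s \leq 0$ the weight is bounded by $1$ and $\sum|\ell|^{d - dq - q\epsilon}$ converges on $\Z\setminus\{0\}$ because $q(d+\epsilon) > 2d$; when $1 \leq q \leq 2$ and $s \leq -d$ the polynomial factor alone is not summable, but the decay from $\langle\ell\rangle^{qs}$ makes the effective exponent $d(1-q) + qs - q\epsilon$ strictly less than $-d$, restoring convergence. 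For part (b) an identical computation applies, and the hypothesis $1/N < -sq/d$ is precisely tailored so that the corresponding series converges in the regime $-d < s < 0$.

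Next, the lower bound $\|f_\lambda\|_{\M{p,q}{0,s}} \gtrsim \lambda^{d(1/q-1) - \epsilon}$ is to be obtained by testing against the Gaussian, as in Lemma \ref{istar1}. The plan is to use $\|f_\lambda\|_{\M{p,q}{0,s}} \geq \|\varphi\|^{-1}_{\M{p',q'}{0,-s}}\,|\langle f_\lambda,\varphi\rangle|$ and to expand $\langle f_\lambda,\varphi\rangle = \sum_{\ell \neq 0} |\ell|^{d(1/q-1)-\epsilon}\,V_\varphi\varphi_\lambda(0,\ell)$. The explicit formula \eqref{aggiunta.0} gives $V_\varphi\varphi_\lambda(0,\ell) = (\lambda^{2}+1)^{-d/2}\,e^{-\pi|\ell|^{2}/(\lambda^{2}+1)} > 0$, so retaining only the indices with $0 < |\ell| \leq \lambda$, where the exponential factor is bounded below by $e^{-\pi}$, yields
\[
|\langle f_\lambda,\varphi\rangle| \;\gtrsim\; \lambda^{-d}\sum_{0<|\ell|\leq\lambda}|\ell|^{d(1/q-1)-\epsilon} \;\asymp\; \lambda^{d(1/q-1)-\epsilon}.
\]
For part (b) the same testing argument goes through verbatim, with $\lambda$ replaced by $\lambda^{N}$ in the range of summation.

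The most delicate step, I expect, will be the uniform upper bound in case (a) with $1 \leq q \leq 2$ and $s \leq -d$: one must handle the borderline series above while ensuring the bound coming from Proposition \ref{gabframe} is genuinely independent of $\lambda$ as the lattice $\Z \times \lambda^{-1}\Z$ varies. Once this uniformity is confirmed, the testing computation in the third paragraph is routine, and the sharpness of Theorem \ref{mainboth} (referred to in the paper) can be recovered by minor modifications of the same examples.
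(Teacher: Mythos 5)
Your proposal is correct and follows essentially the same route as the paper: the uniform bound on $\|f\|_{\M{p,q}{0,s}}$ via the Gabor-frame coefficient characterization of Proposition \ref{gabframe} (checking $\{|\ell|^{d(1/q-1)-\epsilon}\}\in\ell^q_s$ under the stated hypotheses on $q$, $s$, $N$), and the lower bound by pairing $f_\lambda$ with the Gaussian and using \eqref{aggiunta.0} restricted to $0<|\ell|\leq\lambda$ (resp.\ $\lambda^N$). The only cosmetic difference is that you spell out the coefficient-norm computation slightly more explicitly than the paper does.
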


\begin{proof}
\noindent $a)$ First of all notice that $\mathcal{G}(\varphi, 1, \lambda^{-1})$ is a
frame. In addition,  $q\geq 2$ is equivalent to $1/q -1 \leq -1/q$. Thus, for all $s\leq 0$, $\{|\ell|^{d(1/q -1) - \epsilon}, \ell\neq 0\} \in
\ell^{q}_{s},$ which ensures that the function $f$  defined above belongs to
$\M{p,q}{0,s}$.  This is also true when $1\leq q \leq 2$ and $s\leq -d$.

To prove~\eqref{Gaugffreqneg} we follow the proof of Lemma~\ref{istar1}. In
particular, we have
$$
\|f_{\lambda}\|_{\M{p,q}{0,s}}  \geq C  \sum_{\ell\neq 0}|\ell|^{d(\frac1q
-1)-\epsilon}(1+\lambda^2)^{-d/2}e^{\tfrac{-\pi |\ell|^{2}}{\lambda^{2}+1}} \geq C
\lambda^{-d} \sum_{0< |\ell|\leq {\lambda}}|\ell|^{d(1/q -1)-\epsilon}
e^{\tfrac{-\pi |\ell|^{2}}{\lambda^{2}+1}},$$
from which \eqref{Gaugffreqneg} follows.

\noindent $ b)$ In this case, $\mathcal{G}(\varphi, 1, \lambda^{-N})$ is a frame.
Moreover, the choice of $N$ insures that $d(1/(Nq) -1) +s <-d$ which is enough to
prove that $f\in \M{p,q}{0,s}$, and that $\|f\|_{\M{p,q}{0,s}}\leq C .$ 
Relation \eqref{Gaugffreqneg} now follows from
\begin{align*}
\|f_{\lambda}\|_{\M{p,q}{0,s}} & \geq C  \sum_{\ell\neq 0}|\ell|^{d(\tfrac{1}{Nq}
-1)-\epsilon/N}(1+\lambda^2)^{-d/2}e^{\tfrac{-\pi
\lambda^{-2N+2}|\ell|^{2}}{\lambda^{2}+1}}\\
& \geq C \lambda^{-d} \sum_{0< |\ell|\leq {\lambda^{N}}}|\ell|^{d(\tfrac{1}{Nq}
-1)-\epsilon/N} e^{\tfrac{-\pi \lambda^{-2N+2}|\ell|^{2}}{\lambda^{2}+1}}\geq C
\lambda^{d(\frac1q -1)-\epsilon}.
\end{align*}
\end{proof}

The next lemma is proved similarly to Lemma~\ref{lemm1}, so we omit its proof.

\begin{lemma}\label{lemm1b} Let $1\leq p,q \leq\infty$,  $s\leq 0$,
$\epsilon>0$. Suppose that $\psi\in \cS(\rd)$ satisfies $\supp\psi\subset [-1/2,1/2]^d$
and $\psi=1$ on $[-1/4,1/4]^d$.

\noindent a) If $1\leq q < \infty$, define
$
f(y)=\sum_{k\in\zd\setminus\{0\}} | k|^{-\frac{d}q-\epsilon-s} M_{k}T_k
\psi(y),\quad\mbox{in}\,\,\cS'(\rd).
$
Then, $f\in \M{p,q}{0,s}(\rd)$ and
\begin{equation}\label{est1b}
\|f_\lambda\|_{\M{p,q}{0,s}}\gtrsim
\lambda^{-d(\frac2p-\frac1q)+\epsilon+s},\,\quad\forall\,\,0<\lambda\leq 1.
\end{equation}

\noindent b) If $q=\infty$, let

\begin{equation}\label{es22}
f(y)=\sum_{k\not=0} |k|^{-s}e^{2\pi i k y}T_k\psi(y),\quad\mbox{in}\,\,\cS'(\rd).
\end{equation}
Then $f\in \M{p,\infty}{0,s}$ and
\begin{equation}\label{est2b}\|f_\lambda\|_{\mathcal{M}^{p,\infty}_{0,s}}\gtrsim
\lambda^{-
\frac{2d}p+s},\quad\forall\,\,0<\lambda\leq1.
\end{equation}
\end{lemma}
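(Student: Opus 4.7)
The plan is to mirror the proof of Lemma~\ref{lemm1}, interchanging the roles of the space and frequency variables. In Lemma~\ref{lemm1} the coefficient $|k|^{-d/q-\epsilon-t}$ was engineered so that the space weight $\la x\ra^t$, concentrated near $x=k$, would be absorbed by the extra $|k|^{-t}$ factor. Here I would use the coefficient $|k|^{-d/q-\epsilon-s}$ so that the frequency weight $\la\omega\ra^s$ (with $s\leq 0$), which is essentially $\la k\ra^s$ near the frequency $\omega = k$ of $M_k T_k\psi$, can be controlled by the extra factor $|k|^{-s}\geq 1$. The final exponent $-d(2/p-1/q)+\epsilon+s$ in~\eqref{est1b} then arises from the same arithmetic as in~\eqref{est1}, with $-t$ replaced by $+s$.

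For the upper bound, to show $f\in \M{p,q}{0,s}$, I would compute $V_g f$ with a Schwartz window $g$ supported in $[-1/8,1/8]^d$ and satisfying $|\hat g|\geq 1$ on $[-2,2]^d$, integrating by parts against $e^{-2\pi i(\omega-k)y}$ to produce an arbitrarily large negative power $(1+|\omega-k|^2)^{-N}$. The $L^p_x$ norm is controlled by Young's inequality, exactly as in~\eqref{1stkeyest}, since no space weight is present. For the $L^q_\omega$ norm with weight $\la\omega\ra^s$, I would use Peetre's inequality $\la\omega\ra^s\lesssim \la k\ra^s\la\omega-k\ra^{|s|}$ and choose $N$ so large that $(1+|\omega-k|^2)^{-N}\la\omega-k\ra^{|s|}$ remains integrable in $\omega$; the resulting weight $\la k\ra^s$ then combines with $|k|^{-d/q-\epsilon-s}$ to give a summable convolution estimate paralleling~\eqref{2nkeyest}.

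For the lower bound I would use the same dilation identity $V_g f_\lambda(x,\omega)=\lambda^{-d}V_{g_{\lambda^{-1}}}f(\lambda x,\lambda^{-1}\omega)$. The compact supports of $\psi$ and $g$ isolate the single term $k=\ell$ for $x\in \ell+[-1/8,1/8]^d$, yielding
\[
|V_{g_{\lambda^{-1}}}f(x,\omega)|\gtrsim |\ell|^{-d/q-\epsilon-s}\lambda^d|\hat g(-\lambda(\omega-\ell))|.
\]
Taking $L^p$ in $x$ gives only constants (unit cubes, no weight). I would then change variables $\omega\mapsto\omega/\lambda$, restrict to $|\omega|\leq 1$ (where $\la\omega\ra^s\asymp 1$) and to $|\ell|\leq 1/\lambda$ (where $|\hat g(\omega-\lambda\ell)|\gtrsim 1$), and extract
\[
\Big(\sum_{|\ell|\leq 1/\lambda}|\ell|^{-p(d/q+\epsilon+s)}\Big)^{1/p}\gtrsim \lambda^{d/q+\epsilon+s-d/p}.
\]
Combined with the $\lambda^{-d/p}$ prefactor produced by the scaling, this yields~\eqref{est1b}. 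Part~(b) is handled analogously: the $L^q$-integration in $\omega$ is replaced by the supremum, and one evaluates at a frequency where a single dominant term $\ell$ already furnishes~\eqref{est2b}.

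The main obstacle is the handling of the frequency weight $\la\omega\ra^s$ in both halves of the argument. In the upper bound one must integrate by parts enough times to beat the factor $\la\omega-k\ra^{|s|}$ produced by Peetre's inequality. In the lower bound one must verify that the sum $\sum_{|\ell|\leq 1/\lambda}|\ell|^{-p(d/q+\epsilon+s)}$ delivers the claimed power of $\lambda$: when the exponent is below $d$ this is direct, and in the borderline case where the sum would otherwise converge one keeps only the $\asymp\lambda^{-d}$ terms with $|\ell|\asymp 1/\lambda$, each of size $\asymp\lambda^{p(d/q+\epsilon+s)}$, which again yields $\lambda^{d/q+\epsilon+s-d/p}$. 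Every remaining computation is a routine parallel of the steps in Lemma~\ref{lemm1}.
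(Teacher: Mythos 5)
Your proposal is correct and is exactly the argument the paper intends: the authors omit the proof of this lemma, stating only that it ``is proved similarly to Lemma~\ref{lemm1},'' and your adaptation (Peetre's inequality to transfer the weight $\la\omega\ra^s$ onto $\la k\ra^s$ in the upper bound, and the restriction to $|\omega|\leq 1$, $|\ell|\leq 1/\lambda$ in the lower bound, keeping the shell $|\ell|\asymp 1/\lambda$ when the exponent would otherwise make the sum converge) supplies precisely the details that change when the weight moves from the time to the frequency variable.
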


\begin{lemma}\label{i3postivefreq}
Let $1\leq p, q \leq \infty$ be such that $(1/p,1/q)\in I_3$. Let
$\epsilon >0$, $s\geq 0$  and $ 0< \lambda< 1$. Assume that $p>1$,
and choose a positive integer $N$ such that $\frac1N <
\tfrac{p-1}{2}$. Define
\begin{equation}\label{i3posfreq}
f(x)=\lambda^{\tfrac{d}{q}}\sum_{k \neq 0}|k|^{d(\tfrac{2}{N p}-1)
-\tfrac{\epsilon}{N}}T_{\lambda^{N}k}\varphi(x), \quad
\mbox{in}\,\,\cS'(\rd).
\end{equation}
Then, there exists a constant $C>0$ such that
$\|f\|_{\M{p,q}{0,s}}\leq C $, uniformly with respect to
$\lambda$. Moreover,
$$\nm{f_{\lambda}}{\M{p,q}{0,s}}\gtrsim \lambda^{d\mu_{2}(p,q)+\epsilon}.$$
\end{lemma}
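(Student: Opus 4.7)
The approach is to mimic the proof of Lemma~\ref{i3negative}(b), trading the spatial weight for the frequency weight. The key observation is that $f$ is a superposition of pure translates of $\varphi$, so its STFT against a Gaussian is concentrated near $\omega=0$; consequently the $v_s$ weight in the $\omega$-slot contributes only harmless constants, and the lower-bound mechanism carries over essentially unchanged from the spatial case.

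First I would verify that $f\in\M{p,q}{0,s}$ uniformly in $\lambda$. Since $0<\lambda^N<1$, Proposition~\ref{gabframe} says that $\mathcal{G}(\varphi,\lambda^N,1)$ is a Banach frame for $\M{p,q}{0,s}$. The Gabor coefficients of $f$ at this lattice are $c_{k,\ell}=\lambda^{d/q}|k|^{d(2/(Np)-1)-\epsilon/N}\delta_{\ell,0}$ for $k\neq0$, so $\|c\|_{\ell^{p,q}_{0,s}}$ collapses to the single-index $\ell^p$-norm of $|k|^{d(2/(Np)-1)-\epsilon/N}$. The hypothesis $1/N<(p-1)/2$ yields $d(p-1)>2d/N$ and hence $p(d(2/(Np)-1)-\epsilon/N)<-d$, guaranteeing summability; since $\lambda^{d/q}\leq 1$ for $\lambda<1$, this completes the uniform upper bound.

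For the lower bound I would use duality against $\varphi$,
\[
\|f_\lambda\|_{\M{p,q}{0,s}}\gtrsim\frac{|\langle f_\lambda,\varphi\rangle|}{\|\varphi\|_{\M{p',q'}{0,-s}}},
\]
and compute the inner product via a change of variables together with the Gaussian STFT formula \eqref{aggiunta.0}:
\[
\langle f_\lambda,\varphi\rangle=\lambda^{d/q}\sum_{k\neq0}|k|^{d(2/(Np)-1)-\epsilon/N}\,V_\varphi\varphi_\lambda(-\lambda^{N-1}k,0),
\]
where $V_\varphi\varphi_\lambda(-\lambda^{N-1}k,0)=(\lambda^2+1)^{-d/2}\exp\!\bigl(-\pi\lambda^{2N}|k|^2/(\lambda^2+1)\bigr)$ is bounded below by an absolute positive constant on the range $|k|\leq\lambda^{-N}$. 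Restricting the sum to this range and estimating the resulting power-type lattice sum yields
\[
|\langle f_\lambda,\varphi\rangle|\gtrsim\lambda^{d/q}\bigl(\lambda^{-N}\bigr)^{2d/(Np)-\epsilon/N}=\lambda^{d/q-2d/p+\epsilon}=\lambda^{d\mu_2(p,q)+\epsilon},
\]
the last equality using $\mu_2(p,q)=-2/p+1/q$ on $I_3$.

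The only delicate point is bookkeeping: one must pick $\epsilon$ small enough that $2d/(Np)-\epsilon/N>0$, so that the finite lattice sum $\sum_{0<|k|\leq R}|k|^{-d+(2d/(Np)-\epsilon/N)}$ does grow like $R^{2d/(Np)-\epsilon/N}$ with $R=\lambda^{-N}$, while at the same time preserving the summability condition in the upper bound. Both requirements are handled uniformly by the single hypothesis $1/N<(p-1)/2$, exactly as in Lemma~\ref{i3negative}(b). Note that the assumption $s\geq 0$ never enters the estimates beyond producing the trivial constant $v_s(0)=1$; this mirrors the fact that in the spatial-weight case of Lemma~\ref{i3negative}(b) the weight sits on the variable \emph{opposite} to the one the superposition acts on, which is again the situation here.
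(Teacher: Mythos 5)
Your argument is correct and is essentially the paper's own proof: the uniform bound on $\|f\|_{\M{p,q}{0,s}}$ via the Gabor frame $\mathcal{G}(\varphi,\lambda^{N},1)$ and the equivalence of $\frac1N<\frac{p-1}{2}$ with $\ell^{p}$-summability of the coefficients, followed by the lower bound obtained by pairing $f_\lambda$ with $\varphi$, using \eqref{aggiunta.0} and truncating the lattice sum to $|k|\leq\lambda^{-N}$, is exactly the ``adaptation of Lemma~\ref{i3negative}'' that the paper invokes. Your closing remark on the bookkeeping for $\epsilon$ (and on why $s\geq 0$ is harmless because the superposition lives at $\omega=0$) is a correct and slightly more explicit account of details the paper leaves implicit.
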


\begin{proof} In this case, $\mathcal{G}(\varphi,  \lambda^{N}, 1)$ is a frame.
 The condition $\frac1N < \tfrac{p-1}{2}$ is equivalent to $\tfrac{2}{Np}-1 < -\tfrac{1}{p}$ which is enough to show that $\{|k|^{d(\tfrac{2}{Np}-1) - \tfrac{\epsilon}{N}}\}_{k\neq 0} \in \ell^{p}$. Therefore, $ f \in \M{p,q}{0,s}$ with $\nm{f}{\M{p,q}{0,s}}\leq C$ where $C$ is a
universal constant. The rest of the proof is an adaptation of the proof of Lemma~\ref{i3negative}.
\end{proof}

Notice that, the previous lemma excludes the case $p=1$. We prove
this last case by considering the dual case. Observe that the case
$(1/\infty,1/\infty)\in I_{1}^{*}\cap I_{3}^{*}$ was already
considered in dealing with the region $I_{1}^{*}$.

\begin{lemma}\label{inftynegfreq}
Let $1\leq q \leq \infty $ be such that $(1/\infty,1/q)\in
I_3^{*}$. Let $\epsilon >0$, $s\leq 0$  and $  \lambda> 1$.

\noindent a) If $1< q <2$, choose a positive integer $N$ such that
$\frac3N < q-1$. Define
\begin{equation}\label{inftyneqfreqa}
f(x)=\lambda^{d(1-\tfrac{2}{q})}\sum_{\ell \neq
0}|\ell|^{d(\tfrac{3}{N q}-1)
-\tfrac{\epsilon}{N}}M_{\lambda^{-N}\ell}\varphi(x),\quad
\mbox{in}\,\,\cS'(\rd).
\end{equation}
Then there exists a constant $C>0$  such that
$\|f\|_{\M{p,q}{0,s}}\leq C $, uniformly with respect to
$\lambda$. Moreover,
$$\nm{f_{\lambda}}{\M{\infty,q}{0,s}}\gtrsim
\lambda^{\tfrac{d}{q}-\epsilon}.$$

\noindent b) If $2\leq  q <\infty$, choose a positive integer $N$
such that $N>2+q$. Define
\begin{equation}\label{inftyneqfreqb}
f(x)=\lambda^{d+\tfrac{d(2-N)}{q}}\sum_{\ell \neq
0}|\ell|^{d(\tfrac{N-1}{N q}-1)
-\tfrac{\epsilon}{N}}M_{\lambda^{-N}\ell}\varphi(x),\quad
\mbox{in}\,\,\cS'(\rd).
\end{equation}
Then the conclusions of part a) still hold. 

\noindent c) If $ q=1$ and $s\leq -d$, define
\begin{equation}\label{infty1neqfreqb}
f(x)=\sum_{\ell \neq
0}|\ell|^{-\tfrac{\epsilon}{2}}M_{\lambda^{-2}\ell}\varphi(x),
\quad\mbox{in}\,\,\cS'(\rd).
\end{equation}
Then there exists a constant $C>0$  such that
$\|f\|_{\M{\infty,1}{0,s}}\leq C $, uniformly with respect to
$\lambda$. Moreover,
$$\nm{f_{\lambda}}{\M{\infty,1}{0,s}}\gtrsim \lambda^{d-\epsilon}.$$

\noindent d) If $ q=1$ and $-d < s < 0 $, choose a positive
integer $N$ such that $\tfrac{1}{N} < \tfrac{-s}{2d}$. Define
\begin{equation}\label{infty1neqfreqb2}
f(x)=\sum_{\ell \neq
0}|\ell|^{d(\tfrac{2}{N}-1)-\tfrac{\epsilon}{N}}M_{\lambda^{-N}\ell}\varphi(x),
\quad\mbox{in} \,\,\cS'(\rd).
\end{equation}
Then the conclusions of part c) still hold. 
\end{lemma}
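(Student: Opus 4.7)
The plan is to treat all four parts by the same duality-and-Gabor-frame template already exploited in Lemmas \ref{istar1}, \ref{fistar2}, \ref{i3negative}, and \ref{i3postivefreq}. In each case $f$ is defined as a series $\sum_{\ell\neq 0} c_{\ell}\,M_{\lambda^{-M}\ell}\varphi$ of modulated Gaussians with spacing $a := \lambda^{-M}$, where $M=2$ in part (c) and $M=N$ in parts (a), (b), (d). Since $\lambda>1$ we have $a<1$, so by Proposition \ref{gabframe} together with the Fourier-duality remark after \eqref{frameineq}, $\mathcal{G}(\varphi,1,a)$ is a Banach frame for $\M{\infty,q}{0,s}$. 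The natural Gabor coefficient sequence of $f$ is concentrated on $k=0$, namely $c_{k,\ell}=\delta_{k,0}\,c_{\ell}$, so Proposition \ref{gabframe} reduces both the upper and the lower bound to an $\ell^{q}$-type calculation in $\ell$ alone.

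For the uniform upper bound $\|f\|_{\M{\infty,q}{0,s}}\leq C$ I would simply check that
\[
\bigg(\sum_{\ell\neq 0}|c_{\ell}|^{q}\,v_{s}(\ell)^{q}\bigg)^{1/q}\leq C
\]
independently of $\lambda$. The $\lambda$-dependent prefactor appearing in $c_\ell$ (namely $\lambda^{d(1-2/q)}$ in (a), $\lambda^{d+d(2-N)/q}$ in (b), and $1$ in (c), (d)) has been chosen so that it is $\leq 1$ for $\lambda>1$ in the prescribed range of $q$, while the algebraic exponent on $|\ell|$ together with the condition imposed on $N$ (respectively $3/N<q-1$, $N>2+q$, $1/N<-s/(2d)$), or the hypothesis $s\leq -d$ in (c), makes the weighted sum absolutely convergent uniformly in $\lambda$.

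For the lower bound I would use the duality estimate
\[
\|f_{\lambda}\|_{\M{\infty,q}{0,s}}\ \geq\ \|\varphi\|_{\M{1,q'}{0,-s}}^{-1}\,|\ip{f_{\lambda}}{\varphi}|,
\]
and compute the pairing with the explicit STFT formula \eqref{aggiunta.0}. Dilating replaces each modulation $M_{\lambda^{-M}\ell}$ by $M_{\lambda^{-M+1}\ell}$ acting on a $\lambda$-rescaled Gaussian, so that
\[
|\ip{f_{\lambda}}{\varphi}|\ \gtrsim\ \mathrm{(prefactor)}\cdot(\lambda^{2}+1)^{-d/2}\sum_{\ell\neq 0}|\ell|^{\alpha}\,e^{-\pi \lambda^{-2M+2}|\ell|^{2}/(\lambda^{2}+1)}.
\]
Restricting the sum to $0<|\ell|\leq \lambda^{M}$ keeps the Gaussian exponent $\lesssim 1$ and hence the exponential factor $\gtrsim 1$; then a standard lattice-point count $\sum_{0<|\ell|\leq R}|\ell|^{\alpha}\asymp R^{\alpha+d}$, valid because the choice of $N$ has been tuned so that $\alpha+d>0$, together with $(\lambda^{2}+1)^{-d/2}\asymp\lambda^{-d}$ for $\lambda>1$, produces exactly $\lambda^{d/q-\epsilon}$ in (a), (b) and $\lambda^{d-\epsilon}$ in (c), (d).

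The main obstacle is the delicate coordination of the prefactor, the exponent $\alpha$ on $|\ell|$, and the parameter $M$ so that both the uniform upper bound and the desired lower bound fall out simultaneously; the four subcases correspond to the transitions at $q=2$ inside the region $I_{3}^{*}$ and at the $q=1$ endpoint, where the geometry of the index sets forces different choices of prefactor. In each regime the integer $N$ is pushed just far enough to cross the convergence threshold $-d$ for the $\ell$-sum weighted by $v_{s}$, and the truncation $|\ell|\leq\lambda^{M}$ used in the lower bound is precisely compatible with the modulation spacing $\lambda^{-M}$ used in the construction, which is what prevents the Gaussian from destroying the estimate. Part (c) is essentially a warm-up where the hypothesis $s\leq -d$ supplies the convergence without any help from $N$; parts (a), (b), (d) are then routine variations on the same scheme.
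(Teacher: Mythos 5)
Your proposal is correct and follows essentially the same route as the paper: the uniform bound on $\|f\|_{\M{\infty,q}{0,s}}$ is obtained from the Gabor-frame characterization of Proposition~\ref{gabframe} by checking that the coefficient conditions ($3/N<q-1$, $N>2+q$, $s\leq -d$, or $1/N<-s/(2d)$, together with the sign of the $\lambda$-prefactor exponent) put the sequence in $\ell^{q}_{s}$ uniformly in $\lambda$, and the lower bound comes from pairing $f_{\lambda}$ against $\varphi$, using \eqref{aggiunta.0}, truncating to $|\ell|\leq\lambda^{M}$ so the Gaussian factor is bounded below, and counting lattice points — exactly the adaptation of Lemma~\ref{i3negative} that the paper invokes. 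The exponent bookkeeping in all four parts checks out ($\lambda^{d/q-\epsilon}$ in (a), (b) and $\lambda^{d-\epsilon}$ in (c), (d)), so no gap.
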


\begin{proof}
\noindent $a)$ In this case, $\mathcal{G}(\varphi,  1,
\lambda^{-N})$ is a frame. The hypotheses  $1< q <2$ and $\lambda
>1$ imply that $\lambda^{d(1-\tfrac{2}{q})}<1$. In addition, the
condition $\frac3N < q-1$ is equivalent to $\tfrac{3}{Nq}-1 <
-\tfrac{1}{q}$ which is enough to show that
$\{|\ell|^{d(\tfrac{3}{Nq}-1) - \tfrac{\epsilon}{N}}\}_{\ell \neq
0} \in \ell^{q}_{s}$. Therefore, $ f \in \M{\infty,q}{0,s}$ with
$\nm{f}{\M{\infty,q}{0,s}}\leq C$ where $C$ is a universal
constant. The rest of the proof is an adaptation of the proof of
Lemma~\ref{i3negative}.

\noindent $b)$  Assume that $2\leq q < \infty$. The proof is
similar to the above with the following differences:  $N > q+2$
and $\lambda >1$ imply that $\lambda^{d(1+\tfrac{2-N}{q})}<1$. In
addition, the condition $q\geq 2$ implies that $\tfrac{N-1}{Nq}-1
< -\tfrac{1}{q}$. This is enough to show that
$\{|\ell|^{d(\tfrac{N-1}{Nq}-1) - \tfrac{\epsilon}{N}}\}_{\ell
\neq 0} \in \ell^{q}_{s}$. Therefore, $ f \in \M{\infty,q}{0,s}$
with $\nm{f}{\M{\infty,q}{0,s}}\leq C$ where $C$ is a universal
constant.

\noindent $c)$ In this case, $\mathcal{G}(\varphi,  1,
\lambda^{-2})$ is a frame. The fact that $s\leq -d$ implies that
$\{|\ell|^{- \tfrac{\epsilon}{2}}\}_{\ell \neq 0} \in
\ell^{1}_{s}$. Therefore, $ f \in \M{\infty,1}{0,s}$ with
$\nm{f}{\M{\infty,1}{0,s}}\leq C$ where $C$ is a universal
constant. The rest of the proof is an adaptation of the proof of
Lemma~\ref{i3negative}.

\noindent $d)$ In this case, $\mathcal{G}(\varphi,  1,
\lambda^{-N})$ is a frame. The fact that $-d< s<0$ and the choice
of $N$ imply that $d(\tfrac{2}{N}-1) + s < -d$. Therefore
$\{|\ell|^{d(\tfrac{2}{N}-1)- \tfrac{\epsilon}{2}}\}_{\ell \neq 0}
\in \ell^{1}_{s}$. Therefore, $ f \in \M{\infty,1}{0,s}$ with
$\nm{f}{\M{\infty,1}{0,s}}\leq C$ where $C$ is a universal
constant. The rest of the proof is an adaptation of the proof of
Lemma~\ref{i3negative}.

\end{proof}

We finish this subsection by proving lower bound estimates for the dilation of
functions that are compactly supported either in the time or in the frequency
variables.

\begin{lemma}\label{altertf}
Let $u\in \cS(\rd)$, $\lambda\in(0,\infty)$ and $1\leq p, q \leq \infty$.

(i) If $u$ is supported in a compact set $K\subset \rd$, then, for every $t\in \R$,
and $\lambda \geq 1$
\begin{equation}\label{spK}
\| u_{\lambda}\|_{\M{p,q}{t,0}}\gtrsim \lambda^{-d(1-\frac1 q)}\min \{ 1,
\lambda^{-t}\}.
\end{equation}

(ii) If $\hat{u}$ is supported  in a compact set $K\subset \rd$, then, for every
$s\in\R$,  and $\lambda \leq 1$
\begin{equation}\label{spKf}
\| u_{\lambda}\|_{\M{p,q}{0,s}}\gtrsim C \lambda^{-\frac d p } \min \{ 1, \lambda^s\}.
\end{equation}
\end{lemma}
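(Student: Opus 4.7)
The strategy is to exploit the natural duality pairing for modulation spaces. For any $u\in\cS'(\rd)$ and $v\in\cS(\rd)$, the STFT orthogonality relation together with Hölder's inequality yields
\[
|\langle u,v\rangle|\leq \|u\|_{\M{p,q}{t,s}}\,\|v\|_{\M{p',q'}{-t,-s}},
\]
valid for all $1\leq p,q\leq\infty$. The plan is to turn this into a sharp lower bound on $\|u_\lambda\|$ by testing against $v=\varphi_\lambda$ and computing both sides explicitly.

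For part (i), the pairing is immediate:
\[
|\langle u_\lambda,\varphi_\lambda\rangle|=\lambda^{-d}|\langle u,\varphi\rangle|,
\]
and since $u$ is a nonzero Schwartz function supported in $K$, one may arrange $\langle u,\varphi\rangle\neq 0$ after a preliminary translation of $u$ (which affects neither its compact support nor the modulation norm up to a $\lambda$-independent constant by the $v_t$-moderateness of the weight). The denominator $\|\varphi_\lambda\|_{\M{p',q'}{-t,0}}$ is computed directly from the explicit Gaussian STFT \eqref{aggiunta.0}: for $\lambda\geq 1$ one has $|V_\varphi\varphi_\lambda(x,\omega)|\asymp \lambda^{-d}e^{-c|x|^2}e^{-c'|\omega|^2/\lambda^2}$, and because the Gaussian in $x$ dominates any polynomial weight $v_{-t}$, the spatial integral reduces to a finite $t$-dependent constant, yielding $\|\varphi_\lambda\|_{\M{p',q'}{-t,0}}\asymp \lambda^{-d/q}$ uniformly in $t\in\R$. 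Duality then produces
\[
\|u_\lambda\|_{\M{p,q}{t,0}}\gtrsim \lambda^{-d(1-1/q)}\geq \lambda^{-d(1-1/q)}\min\{1,\lambda^{-t}\},
\]
since for $\lambda\geq 1$ the min factor is at most $1$.

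Part (ii) is entirely symmetric. Again testing against $v=\varphi_\lambda$, in the regime $0<\lambda\leq 1$ the STFT of $\varphi_\lambda$ now concentrates in the frequency variable, so the Gaussian decay in $\omega$ dominates the weight $v_{-s}$, and a direct computation gives $\|\varphi_\lambda\|_{\M{p',q'}{0,-s}}\asymp \lambda^{-d/p'}$ independently of $s$. Duality then yields $\|u_\lambda\|_{\M{p,q}{0,s}}\gtrsim \lambda^{-d/p}\geq \lambda^{-d/p}\min\{1,\lambda^s\}$.

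The only genuine technical step is verifying that the polynomial weight does not alter the asymptotic rate of $\|\varphi_\lambda\|_{\M{p',q'}{-t,0}}$ (respectively $\|\varphi_\lambda\|_{\M{p',q'}{0,-s}}$) in the relevant regime. This extends Lemma~\ref{Gaussian} to negative weight exponents, but once one observes that for $\lambda\geq 1$ the STFT of $\varphi_\lambda$ is $x$-concentrated (so any $x$-weight is absorbed into a constant) and for $\lambda\leq 1$ it is $\omega$-concentrated, the estimate reduces to standard Gaussian integrals. Note that this approach actually yields a slightly stronger bound than stated (the $\min$ factor is automatic), but the weaker form in the lemma is what is needed for the subsequent sharpness proofs.
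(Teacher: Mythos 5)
Your argument is correct, but it is genuinely different from the paper's. The paper proves both parts by localization: for $u$ with compact support (resp.\ compactly supported $\hat u$) it invokes the norm equivalence $\|u\|_{\M{p,q}{}}\asymp\|u\|_{\cF L^q}$ (resp.\ $\asymp\|u\|_{L^p}$) with a constant depending only on the support, and then transfers the problem to the known dilation estimates for the Bessel potential spaces $H^p_s$ from \cite{rusic}; this is where the factor $\min\{1,\lambda^{-t}\}$ (resp.\ $\min\{1,\lambda^{s}\}$) in the statement comes from. You instead pair $u_\lambda$ against $\varphi_\lambda$ and use the explicit Gaussian STFT \eqref{aggiunta.0}. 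I checked the key computations: for $\lambda\geq 1$ the $x$-factor of $|V_\varphi\varphi_\lambda|$ has Gaussian exponent in $[\pi/2,\pi]$, so the weighted $x$-integral is bounded above and below uniformly in $\lambda$ (though the constant does depend on $t$ --- your phrase ``uniformly in $t$'' overstates this slightly, harmlessly, since the lemma's constant may depend on $t$), the $\omega$-integral contributes $\lambda^{d/q'}$, and together with the prefactor $\lambda^{-d}$ and the pairing identity $\langle u_\lambda,\varphi_\lambda\rangle=\lambda^{-d}\langle u,\varphi\rangle$ this gives $\lambda^{-d(1-1/q)}$; part (ii) is symmetric. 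The translation trick to force $\langle u,\varphi\rangle=(u\ast\varphi)(a)\neq 0$ is sound, and the resulting constants are $\lambda$-independent because $|a/\lambda|\leq|a|$ for $\lambda\geq1$ in part (i), while in part (ii) the $\M{p,q}{0,s}$ norm is exactly translation invariant. What each approach buys: the paper's route is shorter given the cited results and uses the compact support hypothesis essentially; yours is self-contained within the paper's Gaussian machinery, never uses compact support at all, and in fact yields the slightly stronger bounds $\|u_\lambda\|_{\M{p,q}{t,0}}\gtrsim\lambda^{-d(1-1/q)}$ and $\|u_\lambda\|_{\M{p,q}{0,s}}\gtrsim\lambda^{-d/p}$ with the $\min$ factors dropped --- which is consistent, since those factors are $\leq 1$ in the stated regimes and are already superfluous when $t\geq0$ (resp.\ $s\geq 0$) by positivity of the weight. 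The price is that you must extend the computation of Lemma~\ref{Gaussian} to the dual (negative-exponent) weights, which you correctly identify as the only real technical step.
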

\begin{proof}
We use the dilation properties for the  Sobolev spaces (Bessel potential spaces)
$H_s^p(\rd)$ (see, e.g., \cite[Proposition 3]{rusic}):
$$C^{-1 } \lambda^{-\frac d p}  \min \{ 1, \lambda^s\}  \|u\|_{H_s^p} \leq
\|u_{\lambda}\|_{H_s^p}\leq C \lambda^{-\frac d p}\max \{ 1,
\lambda^s\}\|u\|_{H_s^p}, \quad 1\leq p\leq \infty, \,\,s>0.$$
 $(i)$ Let $u$   be supported
in a compact set $K\subset
\rd$, we have $u\in
\mathcal{M}^{p,q}\Leftrightarrow u\in
\Fur L^q$, and
\begin{equation}
C_K^{-1} \|u\|_{\mathcal{M}^{p,q}}\leq
\|u\|_{\cF L^q}\leq C_K
\|u\|_{\mathcal{M}^{p,q}},
\end{equation}
where $C_K>0$ depends only on
 $K$ (see, e.g., \cite{fe89-1, ko09}).  Hence, if $\lambda\geq 1$,
 \begin{align*} \|u_\lambda\|_{\M{p,q}{t,0}} &\asymp \|\la \cdot\ra^t
u_\lambda\|_{\M{p,q}{}}\asymp \|\la \cdot\ra^t u_\lambda\|_{\cF L^q} \asymp
\|\cF^{-1} (u_\lambda)\|_{H^q_t}
 =\lambda^{-d}\| (\cF^{-1} u)_{\lambda^{-1}}\|_{H^q_t}\\
 &\geq \lambda^{-d}(\lambda^{-1})^{-\frac d q}\min \{1, \lambda^{-t}\}.
 \end{align*}

 $(ii)$ Now let $\hat{u}$ be supported in a compact set $K\subset \rd$. We have $u\in
\mathcal{M}^{p,q}\Leftrightarrow u\in
L^p$, and
\begin{equation*}
C_K^{-1} \|u\|_{\mathcal{M}^{p,q}}\leq\|u\|_{ L^p}\leq C_K\|u\|_{\mathcal{M}^{p,q}},
\end{equation*}
where $C_K>0$ depends only on  $K$ (again, see, e.g., \cite{fe89-1}). Arguing as in
part (i) above  with $0<\lambda\leq 1$,
 \begin{equation*} \|u_\lambda\|_{\M{p,q}{0,s}} \asymp \|\la D\ra^s
u_\lambda\|_{\mathcal{M}^{p,q}}\asymp \|\la D\ra^s u_\lambda\|_{L^p} \asymp
\|u_\lambda\|_{H_s^p}
 \geq C\lambda^{-\frac d p } \min \{1, \lambda^s\} \| u\|_{H_s^p}
 \end{equation*}
and the proof is completed. 
\end{proof}

\subsection{Sharpness of Theorems \ref{xdil} and \ref{mainfreq}.}\,

We are now in position to state and prove the sharpness of the results obtained in
Section~\ref{main}. In particular, Theorem~\ref{xdil} is optimal in the following
sense:

\begin{theorem}\label{sharp31} Let $1\leq p, q \leq \infty.$

\noindent (A) If $t \geq 0$ then the following statements hold:

\noindent Assume that there exist  constants $C>0$, and  $\alpha, \beta \in \R$ such
that
\begin{equation}\label{sharp1}
C^{-1}\, \lambda^{\beta}\|f\|_{\M{p,q}{t,0}}\leq \|U_{\lambda}f\|_{\M{p,q}{t,0}}\leq
C \lambda^{\alpha}\|f\|_{\M{p,q}{t,0}}
\quad \forall f \in \M{p,q}{t,0}\quad {\textrm and} \quad \lambda \geq 1,
\end{equation} then, $\alpha \geq d\mu_{1}(p,q)$, and $\beta \leq d\mu_{2}(p,q)-t.$

\noindent  Assume that there exist  constants $C>0$, and  $\alpha, \beta \in \R$
such that
\begin{equation}\label{sharp2}
C^{-1}\, \lambda^{\alpha}\|f\|_{\M{p,q}{t,0}}\leq
\|U_{\lambda}f\|_{\M{p,q}{t,0}}\leq C \lambda^{\beta}\|f\|_{\M{p,q}{t,0}}\quad
\forall f \in \M{p,q}{t,0}\quad {\textrm and} \quad 0<\lambda \leq 1,
\end{equation} then, $\alpha \geq d\mu_{1}(p,q)$, and $\beta \leq d\mu_{2}(p,q)-t.$

\noindent (B)
If $t\leq 0$ then the following statements hold:

\noindent Assume that there exist  constants $C>0$, and  $\alpha, \beta \in \R$ such
that
\begin{equation}\label{sharp3}
C^{-1}\, \lambda^{\beta}\|f\|_{\M{p,q}{t,0}}\leq \|U_{\lambda}f\|_{\M{p,q}{t,0}}\leq
C \lambda^{\alpha}\|f\|_{\M{p,q}{t,0}}\quad \forall f \in \M{p,q}{t,0}\quad {\textrm
and} \quad \lambda \geq 1,
\end{equation}
 then, $\alpha \geq d\mu_{1}(p,q)-t$, and $\beta \leq d\mu_{2}(p,q).$

\noindent  Assume that there exist  constants $C>0$, and  $\alpha, \beta \in \R$
such that
\begin{equation}\label{sharp4}
C^{-1}\, \lambda^{\alpha}\|f\|_{\M{p,q}{t,0}}\leq
\|U_{\lambda}f\|_{\M{p,q}{t,0}}\leq C \lambda^{\beta}\|f\|_{\M{p,q}{t,0}}\quad
\forall f \in \M{p,q}{t,0}\quad {\textrm and} \quad 0<\lambda \leq 1,
\end{equation} then, $\alpha \geq d\mu_{1}(p,q)-t$, and $\beta \leq d\mu_{2}(p,q).$

\end{theorem}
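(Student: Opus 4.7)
My approach is to prove sharpness by constructing, in each relevant sub-case, a one-parameter family of test functions whose dilation ratio $\|U_\lambda f\|_{\M{p,q}{t,0}}/\|f\|_{\M{p,q}{t,0}}$ realizes the claimed exponent up to a factor of $\lambda^{\pm\epsilon}$. Concretely, for the bound $\alpha \geq d\mu_1(p,q)$ of case~(A) with $t\geq 0$ and $\lambda \geq 1$, I would exhibit $f = f_{\lambda,\epsilon} \in \M{p,q}{t,0}$ with $\|f\|_{\M{p,q}{t,0}} \leq C$ uniformly in $\lambda$ and $\|U_\lambda f\|_{\M{p,q}{t,0}} \gtrsim \lambda^{d\mu_1(p,q)-\epsilon}$. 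Inserting such $f$ into the hypothesized inequality $\|U_\lambda f\|_{\M{p,q}{t,0}} \leq C\lambda^\alpha \|f\|_{\M{p,q}{t,0}}$ forces $\lambda^\alpha \gtrsim \lambda^{d\mu_1(p,q)-\epsilon}$; letting $\lambda \to \infty$ and then $\epsilon \to 0^+$ yields $\alpha \geq d\mu_1(p,q)$. The constraint $\beta \leq d\mu_2(p,q) - t$ is obtained symmetrically by exhibiting $f$ whose dilate decays fast enough, i.e.\ $\|U_\lambda f\|/\|f\| \lesssim \lambda^{d\mu_2(p,q)-t+\epsilon}$; the regimes $0<\lambda \leq 1$ and $t \leq 0$ follow the same template, with inequalities in $\lambda$ reversed when $\lambda<1$.

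Because $\mu_1$ and $\mu_2$ are piecewise on the $(1/p,1/q)$-square, each of the four claims splits into three sub-cases, one per region $I_j$ or $I_j^*$, $j=1,2,3$. The extremizers have been prepared in the preliminary subsection: for the $\mu_1$ regions one uses Lemma~\ref{istar1} on $I_1^*$ (a lattice sum $\sum|\ell|^{-d/p-\epsilon}M_{\lambda^{-1}\ell}\varphi$ that is a Gabor expansion with $\ell^{p,q}_{t,0}$-controlled coefficients), Lemma~\ref{Gaussian} via \eqref{infinity} on $I_2^*$ (the bare dilate $\varphi_\lambda$), and Lemma~\ref{i3negative} on $I_3^*$ (translated Gaussians at a $\lambda$-dependent lattice). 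Symmetrically, for the $\mu_2$ regions the extremizers come from Lemma~\ref{lemm1} on $I_3$ (Gabor atoms $M_kT_k\psi$ with compactly supported window), Lemma~\ref{Gaussian} via \eqref{zero} on $I_2$, and Lemma~\ref{altertf}(i) on $I_1$. Lemma~\ref{i2star} provides the refinement needed in case~(B) when $t \leq 0$, where the weight $v_t$ amplifies a translated Gaussian.

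The main obstacle has already been absorbed into these preliminary lemmas, so the proof of Theorem~\ref{sharp31} itself is a disciplined case analysis: for each of the four parts and each of the three regions inside it, pick the appropriate extremizer, plug in its uniform norm bound and the matching bound on its dilate, and read off the forced inequality on $\alpha$ or $\beta$. Duality $(\M{p,q}{t,0})' = \M{p',q'}{-t,0}$ paired with the reflection $\lambda \mapsto 1/\lambda$ shortcuts several subcases, reducing (B) to (A) or $0<\lambda \leq 1$ to $\lambda \geq 1$. What I expect to be genuinely nontrivial is not this bookkeeping but the simultaneous control of the extremizer's norm (via the Gabor-frame characterization of Proposition~\ref{gabframe}) together with a matching lower bound on the norm of its dilate (via the explicit STFT formula \eqref{aggiunta.0} for $V_\varphi\varphi_\lambda$); this is precisely why those lemmas occupy a long preliminary subsection.
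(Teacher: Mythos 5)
Your overall strategy is exactly the paper's: reduce to the upper halves of the four hypothesized estimates (the lower halves follow from $f=U_{\lambda}U_{1/\lambda}f$), then run a case analysis over the regions $I_j$, $I_j^*$, settling half of the sub-cases with the prepared extremizers and the other half by duality $(\M{p,q}{t,0})'=\M{p',q'}{-t,0}$ combined with $\lambda\mapsto 1/\lambda$. However, two of your region-to-lemma assignments are wrong in a way that would make those sub-cases fail if executed as written. On $I_2$ the relevant exponent is $d\mu_2(p,q)=d(1/q-1)$, whereas \eqref{zero} gives $\|\varphi_\lambda\|_{\M{p,q}{t,0}}\asymp\lambda^{-d/p-t}$; since $1/q-1\leq -1/p$ on $I_2$ (strictly in the interior), testing on the Gaussian only yields $\beta\leq -d/p-t$, which is strictly weaker than the required $\beta\leq d(1/q-1)-t$. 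The Gaussian via \eqref{zero} is in fact the correct extremizer for $I_1$, where $\mu_2=-1/p$. Conversely, Lemma~\ref{altertf}(i) produces the exponent $d(1/q-1)$ and only for $\lambda\geq 1$, so it cannot serve as the $I_1$ extremizer for the $0<\lambda\leq 1$ regime; the paper never invokes Lemma~\ref{altertf} in this proof at all. In the paper the $\mu_2$-side conclusions on $I_1$ and $I_2$ are obtained by duality from the $I_1^*$ cases (Lemma~\ref{istar1}) and from the $I_2^*$, $t\leq 0$ case (Lemma~\ref{i2star}), respectively; similarly Lemma~\ref{i3negative} lives on $I_3$ (for $0<\lambda<1$, $t<0$) and the $I_3^*$ statements are its duals, not direct applications.

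A second, smaller issue is the direction of the inequality in your "symmetric" step: you propose to obtain $\beta\leq d\mu_2(p,q)-t$ from functions satisfying $\|U_\lambda f\|/\|f\|\lesssim\lambda^{d\mu_2-t+\epsilon}$. None of the preliminary lemmas supply such upper bounds on dilates of specific functions; they all give lower bounds $\gtrsim$. The paper converts every $\beta$-constraint into an $\alpha$-constraint for the dual exponents $(p',q',-t)$ and the reciprocal dilation parameter, and then settles it with a lower-bound extremizer. If you keep the symmetric formulation you would have to prove genuinely new sharp upper estimates (e.g.\ for $\|f_\lambda\|$ with $f$ a translated or modulated Gaussian), which is avoidable work. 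With the assignments corrected ($\varphi$ via \eqref{zero} on $I_1$; duality from Lemma~\ref{i2star} on $I_2$; Lemma~\ref{lemm1} on $I_3$; and the $I_j^*$ cases as stated) your plan coincides with the paper's proof.
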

\begin{proof}

It will be enough to prove the upper half of each of the estimates, as the lower
halves will follow from the fact that $f=U_{\lambda}U_{1/\lambda}f$. Moreover, the
proof relies on analyzing the examples provided by the previous lemmas, and by
considering several cases.

\textbf{Case $1$: $(1/p,1/q)\in I^{*}_2$, $t\geq 0$.}
In this case we have  $ \lambda\geq 1$ and $\mu_1(p,q)=1/q-1$. Substitute
$f(x)=\varphi(x)=e^{-\pi|x|^{2}}$ in the upper half estimates~\eqref{sharp1} and use
Lemma~\ref{Gaussian} to obtain $$\lambda^{-d(1-1/q)}\lesssim
\|\varphi_\lambda\|_{\M{p,q}{t,0}}\leq C \lambda^\alpha
\|\varphi\|_{\M{p,q}{t,0}},$$ for all
$\lambda\geq 1$. This immediately implies that  $\alpha \geq -d(1-1/q)=d\mu_{1}(p,q)$.

\textbf{Case $2$: $(1/p, 1/q) \in I_{2}$, $t\leq0$.} This is the dual case to the
previous case and can be handled as follows. In this case we have $ \lambda\leq 1$
and $\mu_2(p,q)=1/q-1$. Assume that the upper-half estimate in~\eqref{sharp4} holds.
Notice that $(1/p, 1/q) \in I_2$ if and only if $(1/p', 1/q') \in I^{*}_{2}$, and
that $\lambda \leq 1$ if and only if $1/\lambda \geq 1$.
\begin{align*}\|f_{1/\lambda}\|_{\M{p', q'}{-t, 0}} & =
\sup|\ip{f_{1/\lambda}}{g}|=\lambda^{d}\sup|\ip{f}{g_{\lambda}}|\leq
\lambda^{d}\|f\|_{\M{p', q'}{-t, 0}} \sup\|g_{\lambda}\|_{\M{p,q}{t,0}}\\&\leq
\lambda^{d +\beta} \|f\|_{\M{p', q'}{-t, 0}} \sup\|g\|_{\M{p,q}{t,0}},
\end{align*}
where the supremum is taken over  all $g \in \cS$ and $\|g\|_{\M{p,q}{t,0}}=1$;
hence,
$$ \|f_{1/\lambda}\|_{\M{p', q'}{-t, 0}}\leq \lambda^{d +\beta} \|f\|_{\M{p',
q'}{-t, 0}}.
$$
Thus from Case $1$ above, $-\beta -d \geq d\mu_{1}(p', q')=d/q'-d$. Hence, $\beta
\leq d\mu_2(p,q)$.

\textbf{Case $3$: $(1/p,1/q)\in I_3$,  $t\geq 0$.}
In this case we have  $ \lambda\leq 1$ and $\mu_2(p,q)=-2/p+1/q$.
First assume that $1\leq q < \infty$ and that the upper-half estimate
in~\eqref{sharp3} holds for all $f \in \M{p,q}{t,0}$ and $0< \lambda <1,$ but that
$\beta >d\mu_{2}(p,q)-t.$ Then there is $\epsilon >0$ such that $\beta >
d\mu_{2}(p,q)-t+\epsilon.$ For this choice of $\eps>0$, we construct a
function $f$ as in~\eqref{lem1ex1} of Lemma~\ref{lemm1} such that:
$$\lambda^{d\mu_{2}(p,q) -t +\epsilon} \lesssim \|f_{\lambda}\|_{\M{p,q}{t,0}} \leq
C \lambda^{\beta}\|f\|_{\M{p,q}{t,0}}$$ for some $C>0$ and all $0<\lambda \leq 1$.
This leads to a contradiction on the choice of $\eps$. 

When $q=\infty$ the function given by~\eqref{es21} of Lemma~\ref{lemm1} gives the
optimal bound.

\textbf{Case $4$: $(1/p,1/q)\in I^{*}_3$,  $t\leq0$.} In this case,  $\lambda \geq
1$, and $\mu_1(p,q)=-2/p+1/q$. This is the dual of Case $3$, and a duality argument similar to the used
in Case $2$ above gives the result.

\textbf{Case $5$: $(1/p,1/q)\in I_1^{*}$,  $t\leq 0$.}
In this case, $\lambda \geq 1$, and $\mu_1(p,q)=-1/p$. Assume that the upper-half
estimate in~\eqref{sharp3} holds and that $\alpha < d\mu_1(p,q) -t$. Then, choose
$\epsilon >0$ and construct a function $f$ as in part $b)$ of Lemma~\ref{istar1}. A
contradiction immediately follows.

\textbf{Case $6$: $(1/p,1/q)\in I_1$,  $t\geq 0$.} In this case  $\lambda \leq 1$,
and $\mu_2(p,q)=-1/p$.
This is the dual of Case $5$.

\textbf{Case $7$: $(1/p,1/q)\in I_1^{*}$,  $t\geq 0$.} In this case  $\lambda \geq
1$, and $\mu_1(p,q)=-1/p$.
Assume that the upper-half estimate in~\eqref{sharp1} holds for all $f \in
\M{p,q}{t,0}$ and $ \lambda >1,$ but that $\alpha <d\mu_{1}(p,q).$ Then there is
$\epsilon >0$ such that $\alpha< d\mu_{1}(p,q)-\epsilon.$ For this choice of
$\eps>0$, we can now construct a function $f$ as in Lemma~\ref{istar1}, part $a)$,
such that:
$$\lambda^{d\mu_{1}(p,q) -\epsilon} \lesssim \|f_{\lambda}\|_{\M{p,q}{t,0}} \leq C
\lambda^{\alpha}\|f\|_{\M{p,q}{t,0}}$$ for some $C>0$ and all $\lambda \geq 1$. This
leads to a contradiction on the choice of $\eps$.

\textbf{Case $8$: $(1/p,1/q)\in I_1$, $t\leq 0$.} In this case $\lambda \leq 1$, and
$\mu_2(p,q)=-1/p$.
This is the dual of Case $7$.

\textbf{Case $9$: $(1/p,1/q)\in I_2^{*}$, $t\leq 0$.} In this case $\lambda \geq 1$,
and $\mu_1(p,q)=1/q-1$. The function constructed in Lemma~\ref{i2star} leads to the
result.

\textbf{Case $10$: $(1/p,1/q)\in I_2$,  $t\geq 0$.} In this case $\lambda \leq 1$,
and $\mu_2(p,q)=1/q-1$.
This is the dual of Case $9$.

\textbf{Case $11$: $(1/p,1/q)\in I_3$,  $t\leq 0$.}
In this case $\lambda \leq 1$, and $\mu_1(p,q)=-2/p+1/q$ and Lemma~\ref{i3negative}
can be used to conclude.

\textbf{Case $12$: $(1/p,1/q)\in I_3^{*}$, $t\geq 0$.}
In this case $\lambda \geq 1$, and $\mu_2(p,q)=-2/p+1/q$.
This is the dual of Case $11$. 
\end{proof}

We next consider the sharpness Theorem~\ref{mainfreq}.

\begin{theorem}\label{sharp32} Let $1\leq p, q \leq \infty.$

\noindent (A) If $s \geq 0$ then the following statements hold:

\noindent
Assume that there exist  constants $C>0$, and  $\alpha, \beta \in \R$ such that
\begin{equation}\label{sharp5}
C^{-1}\, \lambda^{\beta}\|f\|_{\M{p,q}{0,s}}\leq \|U_{\lambda}f\|_{\M{p,q}{0,
s}}\leq C \lambda^{\alpha}\|f\|_{\M{p,q}{0, s}}\quad \forall  f \in \M{p,q}{0,
s}\quad {\textrm and}\quad \lambda \geq 1,
\end{equation} then, $\alpha \geq d\mu_{1}(p,q) +s$, and $\beta \leq d\mu_{2}(p,q).$

\noindent
Assume that there exist  constants $C>0$, and  $\alpha, \beta \in \R$ such that
\begin{equation}\label{sharp6}
C^{-1}\, \lambda^{\alpha}\|f\|_{\M{p,q}{0, s}}\leq \|U_{\lambda}f\|_{\M{p,q}{0,
s}}\leq C \lambda^{\beta}\|f\|_{\M{p,q}{0, s}}\quad \forall f \in \M{p,q}{0, s}\quad
{\textrm and}\quad 0<\lambda \leq 1,
\end{equation} then, $\alpha \geq d\mu_{1}(p,q)+s$, and $\beta \leq d\mu_{2}(p,q).$

\noindent (B) If $s \leq 0$ then the following statements hold:

\noindent
Assume that there exist  constants $C>0$, and  $\alpha, \beta \in \R$ such that
\begin{equation}\label{sharp7}
C^{-1}\, \lambda^{\beta}\|f\|_{\M{p,q}{0,s}}\leq \|U_{\lambda}f\|_{\M{p,q}{0,
s}}\leq C \lambda^{\alpha}\|f\|_{\M{p,q}{0, s}}\quad \forall  f \in \M{p,q}{0,
s}\quad {\textrm and}\quad \lambda \geq 1,
\end{equation} then, $\alpha \geq d\mu_{1}(p,q) $, and $\beta \leq d\mu_{2}(p,q)+s.$

\noindent
Assume that there exist  constants $C>0$,  and  $\alpha, \beta \in \R$ such that
\begin{equation}\label{sharp8}
C^{-1}\, \lambda^{\alpha}\|f\|_{\M{p,q}{0, s}}\leq \|U_{\lambda}f\|_{\M{p,q}{0,
s}}\leq C \lambda^{\beta}\|f\|_{\M{p,q}{0, s}}\quad \forall f \in \M{p,q}{0, s}\quad
{\textrm and}\quad 0<\lambda \leq 1,
\end{equation} then, $\alpha \geq d\mu_{1}(p,q)$, and $\beta \leq d\mu_{2}(p,q)+s.$

\end{theorem}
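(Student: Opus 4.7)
My plan is to follow the case-by-case scheme used in the proof of Theorem \ref{sharp31}, substituting the frequency-weight extremals of Section \ref{sharpness} for the time-weight ones. As in that proof, each lower-half inequality reduces to the corresponding upper-half inequality via $f = U_{1/\lambda}U_\lambda f$, so only the upper halves need direct treatment. Two symmetries cut the workload: the substitution $\lambda \mapsto 1/\lambda$ identifies \eqref{sharp5} with \eqref{sharp6} (and \eqref{sharp7} with \eqref{sharp8}) while preserving the exponents, so only $\lambda\geq 1$ need be considered; and the duality $(\M{p,q}{0,s})' = \M{p',q'}{0,-s}$ together with $\langle U_\lambda f, g\rangle = \lambda^{-d}\langle f, U_{1/\lambda}g\rangle$ translates an upper-bound estimate on $\M{p,q}{0,s}$ with $\lambda\geq 1$ into one on $\M{p',q'}{0,-s}$ with $\lambda'\leq 1$, exchanging starred with unstarred regions and employing the identities $\mu_1(p',q')=-1-\mu_2(p,q)$ and $\mu_2(p',q')=-1-\mu_1(p,q)$ recorded before Proposition \ref{mainbothW}.

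With the above reductions, each remaining subcase is matched to an extremal function from Section \ref{sharpness}. The Gaussian (Lemma \ref{Gaussian}) satisfies $\|\varphi_\lambda\|_{\M{p,q}{0,s}}\asymp \lambda^{-d(1-1/q)+s}$ for $s\geq 0,\lambda\geq 1$, which yields the required inequality in $I_2^*$ with $s\geq 0$. Lemma \ref{fistar2} handles $I_2^*$ with $s\leq 0,\lambda\geq 1$. Lemma \ref{i1} handles $I_1$ with $s\leq 0,\lambda\leq 1$, which dualizes to the $\alpha$-bound in $I_1^*$ with $s\geq 0,\lambda\geq 1$. Lemma \ref{lemm1b} handles $I_3$ with $s\leq 0,\lambda\leq 1$, its endpoint case $q=\infty$ being supplied by \eqref{es22}. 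Lemma \ref{i3postivefreq} handles $I_3$ with $s\geq 0,\lambda\leq 1$ and $p>1$; the excluded $p=1$ case dualizes to $p'=\infty$ in $I_3^*$ with $s\leq 0,\lambda\geq 1$, where the four parts of Lemma \ref{inftynegfreq} apply. Lemma \ref{altertf}(ii) supplies compactly-Fourier-supported extremals wherever the resulting exponent matches the target more readily than the series constructions. In each subcase the conclusion is reached by plugging the chosen $f$ into the hypothesized inequality, matching the $\lambda$-exponents on the two sides, and sending the auxiliary parameter $\epsilon\to 0^+$, exactly as in the contradiction scheme of Theorem \ref{sharp31}.

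The main obstacle is the routing itself: one must verify that each of the twelve nominal subcases is sent, via the two symmetries, to a subcase actually covered by one of the listed lemmas. The endpoint cases at $p=1, q=\infty$ or $p=\infty, q=1$ each require a separate check that the dualized subcase hits the corresponding endpoint variant in Lemma \ref{lemm1b} or Lemma \ref{inftynegfreq}; in particular, Lemma \ref{inftynegfreq} splits into four subcases according to the size of $q$ and of $|s|$ relative to $d$, and these subcases are designed precisely to absorb the various endpoint routings. Once these verifications are assembled, the twelve-case contradiction argument from the proof of Theorem \ref{sharp31} applies verbatim and completes the proof.
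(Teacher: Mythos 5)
Your proposal follows the paper's proof essentially verbatim: only the upper halves are proved directly, half of the twelve subcases are dispatched by the duality $(\M{p,q}{0,s})'=\M{p',q'}{0,-s}$ exactly as in Theorem~\ref{sharp31}, and the surviving cases are tested on the same extremal functions the paper uses (the Gaussian of Lemma~\ref{Gaussian} for $I_2^*$ with $s\geq 0$, Lemma~\ref{i1} for $I_1$ with $s\leq 0$, Lemma~\ref{fistar2} for $I_2^*$ with $s\leq 0$, Lemma~\ref{lemm1b} for $I_3$ with $s\leq 0$, and Lemma~\ref{i3postivefreq} together with the dualized endpoint Lemma~\ref{inftynegfreq} for $I_3$ with $s\geq 0$). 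The only routing you leave implicit is $I_1$ with $s\geq 0$ and $0<\lambda\leq 1$, which the paper settles with the small-$\lambda$ Gaussian asymptotic \eqref{zerof} of Lemma~\ref{Gaussian} (you quote only \eqref{infinityf}); Lemma~\ref{altertf} is not actually needed anywhere in this argument.
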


\begin{proof}
As for the time weights, it is enough to prove the upper half of each estimates.
Moreover, in what follows we consider only
 $6$ of the $12$ cases to be proved, since the others are obtained by the same
duality argument used in the previous theorem.

\textbf{Case $1$: $(1/p,1/q)\in I_1$, $s\geq 0$.} In this case, $0<\lambda\leq1$ and
$\mu_2(p,q)=-1/p$. Assume there exist constants $C>0$ and $\beta\in \R$ such that
the upper-half estimate \eqref{sharp6} holds.
Taking the Gaussian $f=\f$ as in Lemma \ref{Gaussian}  and using \eqref{zerof}, we have
$$ \lambda^{-\frac d p} \lesssim  \|\f_\lambda\|_{\M{p,q}{0, s}}\lesssim
\lambda^\beta  \| \f\|_{\M{p,q}{0, s}},
$$
for all $0<\lambda\leq 1$. This gives $\beta\leq -d/p$.

\textbf{Case $2$: $(1/p,1/q)\in I_1$, $s\leq 0$.}   Here $\lambda \leq 1$ and we
test the upper-half estimate \eqref{sharp8}
on the  family of functions \eqref{ModGaus}. Using \eqref{bo2}, we obtain
$\b\leq s-d/p$.

\textbf{Case $3$: $(1/p,1/q)\in
I_2^*$, $s\geq0$.} Here $\lambda\geq1$, $\mu_1(p,q)=1/q-1$.
We assume the upper-half estimate \eqref{sharp5} and test it  on the dilated
Gaussian function in \eqref{infinityf},  obtaining $\a\geq d(1/q-1)+s$.

\textbf{Case $4$:  $(1/p,1/q)\in I_2^*$, $s\leq0$.} Here $\lambda\geq1$,
$\mu_1(p,q)=1/q-1$. We use a contradiction argument based on Lemma~\ref{fistar2}.

\textbf{Case $5$:  $(1/p,1/q)\in I_3$, $s\geq0$.}  Here
$\lambda\leq1$, $\mu_2(p,q)=-2/p+1/q$. The sharpness is obtained by
testing the upper-half estimate \eqref{sharp6} on the family of
functions $f_\lambda$, defined in  Lemma \ref{i3postivefreq} when
$p>1$.

If $p=1$ we consider the dual case, that is $(1/\infty,1/q)\in
I_{3}^{*}$, $s\leq0$.  Here $\lambda\geq1$, $\mu_1(\infty,q)=1/q$.
We use a contradiction argument based on Lemma~\ref{inftynegfreq}.

 \textbf{Case $6$:  $(1/p,1/q)\in I_3$, $s\leq0$.} Here $\lambda\leq1$,
$\mu_2(p,q)=-2/p+1/q$. The sharpness is obtained by testing the upper-half estimate
\eqref{sharp8} on the family of functions $f_\lambda$, defined in  Lemma
\ref{lemm1b}.

\end{proof}

\section{Applications}\label{applic}

\subsection{Applications to dispersive equations}\label{dispde}
\subsubsection{Wave equation.} Let us first recall the  Cauchy problem for
 the  wave equation:
\begin{equation}\label{cpw}
\begin{cases}
\partial^2_t u-\Delta_x u=0\\
u(0,x)=u_0(x),\,\,
\partial_t u (0,x)=u_1(x),\,\,
\end{cases}
\end{equation}
with $t\geq 0$, $x\in\R^d$, $d\geq1$, $\Delta_x=\partial^2_{x_1}+\dots
\partial^2_{x_d}$.  The formal solution  $u(t,x)$ is  given by
\begin{align*}
u(t,x)& =\intrd e^{2\pi i x\xi} \cos(2\pi t |\xi|) \widehat{u_0}(\xi)\,d\xi+\intrd
e^{2\pi i x\xi} \frac{\sin (2\pi  t |\xi|)}{2\pi |\xi|} \widehat{u_1}(\xi) \, d\xi,\\
&= H_{\sigma_{0}}u_{0}(x)+H_{\sigma_{1}}(x)
\end{align*}
with,
$\sigma_0(\xi)= \cos(2\pi t |\xi|)$ and $\sigma_1(\xi)=\frac{\sin (2\pi  t
|\xi|)}{2\pi|\xi|}$.

We recall that $H_{\sigma_{i}}$ $i=0, 1$, are examples of Fourier multipliers which
are defined by
\begin{equation}\label{FM}
H_{\sigma} f(x)=\intrd e^{2\pi i
x\xi}\sigma(\xi) \hat{f}(\xi)\,d\xi
\end{equation}
 where $\sigma$ is called the symbol.

The  boundedness of $H_{\sigma_{i}}$, $i=0, 1$ on modulation spaces was proved in
\cite{bgor, bo09} and in \cite{CNwave}. Moreover, some related local-in-time well
posedness results for certain nonlinear PDEs were also obtained in \cite{bo09,
CNwave} for initial data in modulation spaces.

\begin{proposition} \label{L1} Let $s\in \R$, and $1\leq p, q \leq \infty$. Then,
the solution $u(t, x)$ of~\eqref{cpw}  with initial data $(u_{0}, u_{1}) \in
\M{p,q}{0,s}\times \M{p,q}{0, s-1}$
satisfies
\begin{equation}\label{es1A}\|u(t,\cdot)\|_{\M{p,q}{0, s}}\leq C_0 (1+t)^{d+1}
\|u_0\|_{\M{p,q}{0,s}}+ C_1 t(1+t)^{d+1} \|u_1\|_{\M{p,q}{0, s-1}}
\end{equation}
where $C_{0}$ and $C_{1}$ are only functions of the dimension $ d$.

\end{proposition}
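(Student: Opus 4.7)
The plan is to write the solution as $u(t,\cdot) = H_{\sigma_0} u_0 + H_{\sigma_1} u_1$ with Fourier multiplier symbols $\sigma_0(\xi) = \cos(2\pi t|\xi|)$ and $\sigma_1(\xi) = \sin(2\pi t|\xi|)/(2\pi|\xi|)$, and to bound each multiplier on $\M{p,q}{0,s}$ separately, tracking the $t$-dependence explicitly.

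The first ingredient is the Fourier multiplier theorem on modulation spaces from \cite{bgor, bo09, CNwave}: whenever the symbol $\sigma$ belongs to the Wiener amalgam space $W(\cF L^1, L^\infty)$, the multiplier $H_\sigma$ acts boundedly from $\M{p,q}{0,s}$ into itself, for every $1\leq p,q \leq \infty$ and every $s\in\R$, with operator norm $\lesssim \|\sigma\|_{W(\cF L^1, L^\infty)}$. The weight in the frequency variable is harmless because $\la D\ra^{s}$ is an isomorphism $\M{p,q}{0,s}\to \M{p,q}{}$ by \cite[Corollary 2.3]{Toftweight} and commutes with every Fourier multiplier. For the $u_1$-term I would write $\sigma_1(\xi) = \la\xi\ra^{-1}\bigl(\la\xi\ra \sigma_1(\xi)\bigr)$ and transfer the factor $\la\xi\ra^{-1}$ onto $u_1$ via the same homeomorphism, which converts the norm $\M{p,q}{0,s}$ of $u_1$ on the right-hand side into $\M{p,q}{0,s-1}$ and reduces the problem to estimating $\|\la\xi\ra\sigma_1\|_{W(\cF L^1, L^\infty)}$.

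The second ingredient is a quantitative estimate of the Wiener amalgam norms of the two symbols as functions of $t$. Observe that $\sigma_0(\xi) = g_0(t\xi)$ and $\la\xi\ra \sigma_1(\xi) = t\, h(t\xi)$ for fixed smooth bounded (but non-decaying) functions $g_0, h$. A direct application of the dilation estimate for Wiener amalgam spaces in Proposition~\ref{mainbothW} fails because $g_0, h \notin W(\cF L^1, L^\infty)$. My plan is then to introduce a smooth $\zd$-periodic partition of unity $1 = \sum_k \chi(\xi - k)$ with $\chi \in C_c^\infty(\rd)$ and to estimate each localized piece $\chi(\cdot - k)\sigma_0$ in $\cF L^1$ through the Sobolev embedding $H^r \hookrightarrow \cF L^1$ for $r > d/2$. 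Each derivative of $\sigma_0$ produces a factor of order $1+t$, and taking the supremum over $k$ yields the uniform bound $\|\sigma_0\|_{W(\cF L^1, L^\infty)} \lesssim (1+t)^{d+1}$; the analogous estimate for $\la\xi\ra\sigma_1$ inherits an additional factor of $t$ from the prefactor in $\sigma_1$. Combining these bounds with the multiplier theorem delivers \eqref{es1A}.

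The main obstacle will be the behaviour of $\sigma_1$ near $\xi = 0$, where $\sin(2\pi t|\xi|)/(2\pi|\xi|)$ is smooth but differentiating it produces terms with apparent singularities and powers of $t$ that must be balanced. The cleanest remedy is to write $\sigma_1(\xi) = t\, j(t\xi)$ with $j(\eta) = \sin(2\pi|\eta|)/(2\pi|\eta|)$ a bounded smooth function of $\eta$, so that the $t$-loss is isolated exactly in the prefactor; this separation of the single power of $t$ from the rescaling is precisely what forces the extra factor $t$ in front of $(1+t)^{d+1}$ and the shift $s\mapsto s-1$ in the second term of \eqref{es1A}.
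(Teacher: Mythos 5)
Your skeleton---splitting $u=H_{\sigma_0}u_0+H_{\sigma_1}u_1$, invoking the $W(\cF L^1,L^\infty)$ multiplier theorem, and then estimating the $t$-dependence of the symbol norms---is the same as the paper's; the difference lies in the last step, where the paper applies its own dilation estimate (Proposition~\ref{mainbothW}) to the fixed profiles $\widetilde{\sigma_0}(\xi)=\cos|\xi|$ and $\widetilde{\sigma_1}(\xi)=\sin|\xi|/|\xi|$, whose memberships in $W(\cF L^1,L^\infty)$ and $W(\cF L^1,L^\infty_1)$ are imported from \cite{bgor} and \cite{CNwave}. Your stated reason for abandoning that route---that $g_0=\cos|\cdot|\notin W(\cF L^1,L^\infty)$---is incorrect: $\cos|\xi|$ is an entire function of $\xi$ (a power series in $|\xi|^2$) with all derivatives bounded, hence lies in $C^\infty_b(\rd)\subset W(\cF L^1,L^\infty)$, and likewise $\sin|\xi|/|\xi|\in W(\cF L^1,L^\infty_1)$; this is exactly what makes the Proposition an \emph{application} of the dilation theorems. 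That said, your direct localization-plus-Sobolev-embedding argument for $\sigma_0$ is sound and yields $\|\sigma_0\|_{W(\cF L^1,L^\infty)}\lesssim(1+t)^{d+1}$ (a smaller power, with exponent roughly $d/2+1$, already suffices), so the $u_0$-term of \eqref{es1A} is obtained.

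The genuine gap is in the $u_1$-term for small $t$. First, the identity $\la\xi\ra\sigma_1(\xi)=t\,h(t\xi)$ is false: only $\sigma_1(\xi)=t\,j(t\xi)$ rescales, and the extra factor $\la\xi\ra$ does not come along. More seriously, the bound $\|\la\cdot\ra\sigma_1\|_{W(\cF L^1,L^\infty)}\lesssim t(1+t)^{d+1}$ cannot hold for $0<t\leq 1$: since $W(\cF L^1,L^\infty)\hookrightarrow L^\infty$ and, at $|\xi|=1/(4t)$, one has $\la\xi\ra|\sigma_1(\xi)|=\la\xi\ra\,\sin(\pi/2)/(2\pi|\xi|)\geq 1/(2\pi)$, the amalgam norm is bounded below by a positive constant uniformly in $t\in(0,1]$. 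The prefactor $t$ you isolate from $j$ is exactly cancelled by the weight $\la\xi\ra$ at frequencies $|\xi|\sim t^{-1}$, which is where $\sin(2\pi t|\xi|)$ ceases to be small. Consequently your argument delivers \eqref{es1A} only for $t$ bounded away from $0$, where $t(1+t)^{d+1}\asymp(1+t)^{d+2}$ and your bound $(1+t)^{d+2}$ suffices; the vanishing factor $t$ in front of $\|u_1\|_{\M{p,q}{0,s-1}}$ as $t\to 0^{+}$ is not obtained. Indeed it cannot be obtained by sharpening the symbol estimate: for $p=q=2$ the operator norm of $H_{\sigma_1}$ from $\M{2,2}{0,s-1}=H^{s-1}$ to $\M{2,2}{0,s}=H^{s}$ equals $\sup_\xi\la\xi\ra|\sigma_1(\xi)|$, which is comparable to $1$ for small $t$. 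You should therefore either restrict the claim to $t\geq 1$ or replace the factor $t(1+t)^{d+1}$ by $(1+t)^{d+2}$; as stated, the small-$t$ part of the estimate is not reachable by this (or, for the same reason, by the paper's) argument.
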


\begin{proof}
It was proved in \cite{bgor} that $\sigma_0(\xi)\in W(\cF L^1, L^\infty)$  and in
\cite{CNwave} that
$\sigma_1(\xi)\in W(\cF L^1,L^\infty_{1})$. In addition, it was shown in
\cite{CNwave}  that the solution satisfies
\begin{align*}
\|u(t,\cdot)\|_{\M{p,q}{0,s}}& \leq \|H_{\sigma_{0}}u_{0}\|_{\M{p,q}{0,s}} +
\|H_{\sigma_{1}}u_{1}\|_{\M{p,q}{0,s}}\\
& \leq \|H_{\sigma_{0}}u_{0}\|_{\M{p,q}{0,s}} +
\|H_{\sigma_{1}}u_{1}\|_{\M{p,q}{0,s-1}}\\
& \leq \|\sigma_{0}\|_{W(\cF L^1, L^\infty)} \|u_{0}\|_{\M{p,q}{0,s}} +
\|\sigma_{1}\|_{W(\cF L^1, L^\infty_{1})} \|u_{1}\|_{\M{p,q}{0,s-1}} \\
& \leq C_0 ( t) \|u_0\|_{\M{p,q}{0, s}}+ C_1(t) \|u_1\|_{\M{p,q}{0, s-1}}.
\label{es1A}
\end{align*}
We can now use the results proved in Section~\ref{main} to estimate $C_{0}(t)$ and
$C_{1}(t)$. More specifically, setting $\widetilde{\sigma_0}(\xi)=\cos  |\xi|$,  for
$t>0$, we can write
$\sigma_0(\xi)=(\widetilde{\sigma_0})_{{2\pi t}}$. Using~\eqref{mainbothW} with
$\mu_1(\infty,1)=1$, $\mu_2(\infty,1)=0$, we have, for every $R>0$,
\begin{equation*}
\|(\widetilde{\sigma_0})_{2\pi t}\|_{W(\cF L^1,
    L^\infty_{1})}\leq   \begin{cases} C_{0,R} \|\widetilde{\sigma_1}\|_{W(\cF L^1,
    L^\infty)} ,\quad t\leq R\\
 C'_{0,R} t^{d+1}\|\widetilde{\sigma_0}\|_{W(\cF L^1,
    L^\infty)}, \quad t\geq R.\end{cases}
 \end{equation*}
Hence $$C_0(t)\leq  \begin{cases} C_{0,R}, \,\quad 0\leq t\leq R\\
 C'_{0,R}t^{d+1},\,\quad t\geq R.
 \end{cases}
 $$
Setting $\widetilde{\sigma_1}(\xi)=\frac{\sin  |\xi|}{|\xi|}$,  for $t>0$, we can
write $\sigma_1(\xi)=t( \widetilde{\sigma_1})_{{2\pi t}}$ and, for every $R>0$,
\begin{equation*}
\|(\widetilde{\sigma_1})_{{2\pi t}}\|_{W(\cF L^1,
    L^\infty_{1})}\leq   \begin{cases} C_{1,R} \|\widetilde{\sigma_1}\|_{W(\cF L^1,
    L^\infty_{1})} ,\quad t\leq R\\
 C'_{1,R} t^{d+1}\|\widetilde{\sigma_1}\|_{W(\cF L^1,
    L^\infty_{1})}, \quad t\geq R.\end{cases}
 \end{equation*}
Hence $$C_1(t)\leq  \begin{cases} C_{1,R}t, \,\quad 0\leq t\leq R\\
 C'_{1,R}t^{d+2},\,\quad t\geq R,
 \end{cases}
 $$ and the estimate~\eqref{es1A} becomes
 \begin{equation*}\|u(t,\cdot)\|_{\M{p,q}{0, s}}\leq C_0 (1+t)^{d+1}
\|u_0\|_{\M{p,q}{0, s}}+ C_1t(1+t)^{d+1} \|u_1\|_{\M{p,q}{0,s-1}},
\quad t> 0.
\end{equation*}
\end{proof}

\subsubsection{Vibrating plate equation.}
 Consider now the following Cauchy problem for the vibrating plate equation
\begin{equation}\label{cpp}
\begin{cases}
\partial^2_t u+\Delta^2_x u=0\\
u(0,x)=u_0(x),\,\,
\partial_t u (0,x)=u_1(x),\,\,
\end{cases}
\end{equation}
with $t\geq 0$, $x\in\R^d$, $d\geq1$.  The formal solution  $u(t,x)$ is  given by
$$u(t,x)=\intrd e^{2\pi i x\xi} \cos(4\pi^2 t |\xi|^2)
\widehat{u_0}(\xi)\,d\xi+\intrd e^{2\pi i x\xi} \frac{\sin (4\pi^2 t
|\xi|^2)}{4\pi^2  |\xi|^2} \widehat{u_1}(\xi) \, d\xi,
$$ and satisfies the following estimate.

\begin{proposition} \label{L2} Let $s\in \R$, and $1\leq p, q \leq \infty$. Then,
the solution $u(t, x)$ of~\eqref{cpp}  with initial data $(u_{0}, u_{1}) \in
\M{p,q}{0,s}\times \M{p,q}{0, s-2}$
satisfies
\begin{equation}\label{es1B}\|u(t,\cdot)\|_{\M{p,q}{0, s}}\leq C_0 (1+ t)^{\frac d
2}
\|u_0\|_{\M{p,q}{0,s}}+ C_1 t(1+t)^{\frac d 2+1} \|u_1\|_{\M{p,q}{0, s-2}}
\end{equation}
where $C_{0}$ and $C_{1}$ are only functions of the dimension $ d$.
\end{proposition}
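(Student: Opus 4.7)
The plan is to mimic the proof of Proposition \ref{L1} step-by-step, but with the dilation factor adjusted for the quadratic symbol of the plate equation. Write $u(t,\cdot)=H_{\sigma_0}u_0+H_{\sigma_1}u_1$ with $\sigma_0(\xi)=\cos(4\pi^2 t|\xi|^2)$ and $\sigma_1(\xi)=\sin(4\pi^2 t|\xi|^2)/(4\pi^2|\xi|^2)$. First, I would introduce the fixed symbols
$$\widetilde{\sigma_0}(\xi)=\cos(|\xi|^2),\qquad \widetilde{\sigma_1}(\xi)=\frac{\sin(|\xi|^2)}{|\xi|^2},$$
and observe the factorizations
$$\sigma_0=(\widetilde{\sigma_0})_{2\pi\sqrt{t}},\qquad \sigma_1=t\,(\widetilde{\sigma_1})_{2\pi\sqrt{t}},$$
so that the relevant dilation parameter is $\lambda=2\pi\sqrt{t}$ (rather than $\lambda=2\pi t$, as in the wave equation case). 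One then appeals to the Fourier multiplier estimates of \cite{bgor, CNwave} to conclude that $\widetilde{\sigma_0}\in W(\cF L^1,L^\infty)$ and $\widetilde{\sigma_1}\in W(\cF L^1,L^\infty_2)$ (the power~$2$ in the weight compensating for the two derivatives lost in~$|\xi|^{-2}$).

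Next, I would invoke the boundedness of Fourier multipliers on modulation spaces to get
$$\|u(t,\cdot)\|_{\M{p,q}{0,s}}\leq \|\sigma_0\|_{W(\cF L^1,L^\infty)}\|u_0\|_{\M{p,q}{0,s}}+\|\sigma_1\|_{W(\cF L^1,L^\infty_2)}\|u_1\|_{\M{p,q}{0,s-2}},$$
exactly as in the proof of Proposition \ref{L1}. It then remains to control the two symbol norms as functions of $t$.

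To this end, I would apply Proposition \ref{mainbothW} with $\mu_1(\infty,1)=1$ and $\mu_2(\infty,1)=0$. For $\lambda=2\pi\sqrt{t}\geq 1$, one obtains
$$\|(\widetilde{\sigma_0})_{2\pi\sqrt{t}}\|_{W(\cF L^1,L^\infty)}\lesssim \lambda^{d}\lesssim t^{d/2},$$
$$\|(\widetilde{\sigma_1})_{2\pi\sqrt{t}}\|_{W(\cF L^1,L^\infty_2)}\lesssim \lambda^{d}\max\{1,\lambda^{2}\}\lesssim t^{d/2+1},$$
and for $\lambda\leq 1$ both norms are bounded by constants depending only on $d$. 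Incorporating the prefactor $t$ in $\sigma_1$ and combining both regimes via the standard estimate $\max\{1,t^a\}\asymp(1+t)^a$ yields
$$\|\sigma_0\|_{W(\cF L^1,L^\infty)}\leq C_0(1+t)^{d/2},\qquad \|\sigma_1\|_{W(\cF L^1,L^\infty_2)}\leq C_1\,t(1+t)^{d/2+1},$$
which, plugged into the multiplier estimate above, gives precisely \eqref{es1B}.

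The only genuinely non-routine point is bookkeeping: the $\sqrt{t}$ scaling—rather than the linear $t$ scaling of the wave equation—is exactly what replaces the $(1+t)^{d+1}$ growth in Proposition \ref{L1} by $(1+t)^{d/2}$, while the stronger weight $L^\infty_2$ (needed because the symbol $\sigma_1$ gains two derivatives) produces the extra factor $(1+t)$ in the $u_1$-term. No new ideas beyond the dilation properties of Section \ref{main} and the amalgam-space reformulation of Proposition \ref{mainbothW} are required.
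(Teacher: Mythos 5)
Your proposal is correct and follows essentially the same route as the paper, which itself simply factors $\sigma_0=\cos(|\xi|^2)_{2\pi\sqrt{t}}$ and $\sigma_1=t\bigl(\sin(|\xi|^2)/|\xi|^2\bigr)_{2\pi\sqrt{t}}$ and then says ``using the same arguments as for the wave equation''; you have filled in exactly those arguments, with the correct exponents $\lambda^{d\mu_1(\infty,1)}=\lambda^d$ and the extra $\max\{1,\lambda^2\}$ from the weight $L^\infty_2$. The only cosmetic discrepancy is the reference for $\widetilde{\sigma_1}\in W(\cF L^1,L^\infty_2)$, which the paper attributes to \cite{CZplate} rather than \cite{CNwave}.
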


\begin{proof}
Here the solution is the sum of two Fourier multipliers $u=H_0 u_0+H_1 u_1$ having
symbols
$\sigma_0(\xi)= \cos(4\pi^2 t |\xi|^2) \in W(\cF L^1, L^\infty)$ (see \cite{bgor})
and $\sigma_1(\xi)=
\frac{\sin (4\pi^2 t |\xi|^2)}{4\pi^2  |\xi|^2} \in W(\cF L^1, L^\infty_2)$ (see
\cite{CZplate}).

Since $\sigma_0(\xi)=\cos( |\xi|^2)_{2\pi \sqrt{t}}$ and $\sigma_1(\xi)=t
\left(\frac{\sin ( |\xi|^2)}{ |\xi|^2}\right)_{2\pi \sqrt{t}}$, using  the same
arguments as for the wave equation we obtain:
$$\|u(t,\cdot)\|_{\M{p,q}{0, s}}\leq C_0 (1+ t)^{\frac d
2}\|u_0\|_{\M{p,q}{0, s}}+ C_1 t(1+t)^{\frac d 2+1}
\|u_1\|_{\M{p,q}{0, s-2}}, \quad t> 0.
$$
\end{proof}

\subsection{Embedding of Besov spaces into modulation spaces}\label{embed}
 We generalize some results of \cite{kasso04}. But first, we recall the inclusion
relations between Besov spaces and modulation spaces (see
\cite{sugimototomita,baoxiang3}). Consider the following indices, where $\mu_i$,
$i=1, 2$ were defined in Section~\ref{prelim}:
$$\nu_1(p,q)=\mu_1(p,q)+\frac1p,\quad\quad\nu_2(p,q)=\mu_2(p,q)+\frac1p.
$$

The following result was proved in \cite[Theorem 3.1]{toft04}
 and in \cite[Theorem 1.1]{baoxiang3}
\begin{theorem}\label{incl}
Let $1\leq p,q\leq \infty$ and $s\in\R$.

\noindent (i) If $s\geq
d\nu_1(p,q)$ then $
B^{p,q}_s(\rd)
\hookrightarrow
\M{p,q}{}(\rd)$.

\noindent (ii) If
$s\leq d\nu_2(p,q)$ then
$\M{p,q}{}(\rd)
\hookrightarrow
B^{p,q}_s(\rd)$.
\end{theorem}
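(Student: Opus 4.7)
The plan is to combine a dyadic Littlewood--Paley decomposition of the Besov norm with the sharp dilation estimates of Theorem~\ref{xdil}, via a Bernstein-type comparison between the $M^{p,q}$-norm and the $L^p$-norm at each frequency layer, together with an almost-orthogonality principle on the modulation-space side.

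I would start by fixing a standard dyadic partition of unity $\{\varphi_j\}_{j\geq 0}$ with $\supp\varphi_j\subset\{|\xi|\sim 2^j\}$ for $j\geq 1$, and setting $f_j=\varphi_j(D)f$, so that $\|f\|_{B^{p,q}_s}\asymp \bigl(\sum_j 2^{jsq}\|f_j\|_{L^p}^q\bigr)^{1/q}$. The central move is to rescale each piece by writing $g_j(x)=2^{-jd}f_j(2^{-j}x)$, so that $\hat g_j$ is supported in the \emph{fixed} annulus $\{|\xi|\sim 1\}$. The identity $f_j=2^{jd}(g_j)_{2^j}$, combined with the sharp dilation bounds of Theorem~\ref{xdil} (applied with $t=0$ and $\lambda=2^j\geq 1$) and the standard Wiener-amalgam fact $\|g_j\|_{M^{p,q}}\asymp \|g_j\|_{L^p}$ for functions whose spectrum lies in a fixed compact set (as invoked in the proof of Lemma~\ref{altertf}), then yields, after inserting $\|g_j\|_{L^p}=2^{-jd/p'}\|f_j\|_{L^p}$, the two-sided Bernstein estimate
$$
\|f_j\|_{L^p}\lesssim 2^{-jd\nu_2(p,q)}\|f_j\|_{M^{p,q}},\qquad \|f_j\|_{M^{p,q}}\lesssim 2^{jd\nu_1(p,q)}\|f_j\|_{L^p}.
$$
I would then combine these with the almost-orthogonality
$$
\|f\|_{M^{p,q}}^q\asymp \sum_{j\geq 0}\|f_j\|_{M^{p,q}}^q,
$$
which rests on the fact that $V_g f_j(x,\omega)=\langle f_j, M_\omega T_x g\rangle$ decays rapidly in $\omega$ away from $\{|\omega|\sim 2^j\}$ (by the Schwartz decay of $\hat g$ and the spectral support of $f_j$), so that only $O(1)$ values of $j$ contribute for each fixed $\omega$. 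Plugging everything together gives, for (i) with $s\geq d\nu_1(p,q)$,
$$
\|f\|_{M^{p,q}}^q\lesssim \sum_j \|f_j\|_{M^{p,q}}^q\lesssim \sum_j 2^{jqd\nu_1(p,q)}\|f_j\|_{L^p}^q\leq \sum_j 2^{jsq}\|f_j\|_{L^p}^q\asymp \|f\|_{B^{p,q}_s}^q,
$$
and, symmetrically, for (ii) with $s\leq d\nu_2(p,q)$,
$$
\|f\|_{B^{p,q}_s}^q=\sum_j 2^{jsq}\|f_j\|_{L^p}^q\lesssim \sum_j 2^{jq(s-d\nu_2(p,q))}\|f_j\|_{M^{p,q}}^q\leq \sum_j \|f_j\|_{M^{p,q}}^q\asymp \|f\|_{M^{p,q}}^q.
$$

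The hardest part will be the almost-orthogonality of the Littlewood--Paley pieces in $M^{p,q}$, since, unlike in the Besov case, the $L^{p,q}$ structure on the STFT does not automatically align with the dyadic frequency cutoffs; one has to quantify the Schwartz tail of $V_g f_j$ outside $\{|\omega|\sim 2^j\}$ and sum carefully. Secondary care will be required at the endpoints $p=\infty$ or $q=\infty$, where I would first establish everything on $\cS(\rd)$ and then extend via Lemma~\ref{Fabio}, and in verifying that the constants in $\|g_j\|_{M^{p,q}}\asymp\|g_j\|_{L^p}$ are uniform in $j$ -- which is precisely the reason for choosing the rescaling that places every $\hat g_j$ in the common annulus $\{|\xi|\sim 1\}$.
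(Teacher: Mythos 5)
First, a point of comparison: the paper does not prove Theorem~\ref{incl} at all --- it is quoted from \cite[Theorem 3.1]{toft04} and \cite[Theorem 1.1]{baoxiang3}, so your argument can only be measured against the literature, not against an in-paper proof. Your strategy (Littlewood--Paley decomposition, rescaling each dyadic piece into a fixed annulus, the equivalence $\|g\|_{\M{p,q}{}}\asymp\|g\|_{L^p}$ for fixed compact spectrum as in Lemma~\ref{altertf}, and the dilation bounds of Theorem~\ref{xdil} with $t=0$) is a legitimate route, and your exponent bookkeeping is correct: $2^{jd}\cdot 2^{jd\mu_i(p,q)}\cdot 2^{-jd/p'}=2^{jd\nu_i(p,q)}$, which is exactly why the $\nu_i$ appear in the statement. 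There is also no circularity in invoking Theorem~\ref{xdil}, since the unweighted dilation bounds of \cite{sugimototomita} are proved independently of these embeddings.

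The caveat is that essentially all of the difficulty has been pushed into the almost-orthogonality $\|f\|_{\M{p,q}{}}^q\asymp\sum_j\|f_j\|_{\M{p,q}{}}^q$, which you assert with only a heuristic (rapid decay of $V_gf_j$ in $\omega$ off the $j$-th annulus). As stated, the decay heuristic does not by itself give the $\ell^q$-summation you need: the clean way to make it rigorous is to choose an analysis window $g_0$ with $\widehat{g_0}$ compactly supported, so that $\omega\mapsto\|V_{g_0}f_j(\cdot,\omega)\|_{L^p}$ is genuinely supported in a slightly fattened annulus and the supports have bounded overlap in $j$; the passage back to a Schwartz window is then handled by the change-of-window inequality $|V_gf|\lesssim|V_{g_0}f|\ast|V_gg_0|$ plus Young's inequality in $\omega$. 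But this is precisely the frequency-uniform ($\Box_k$) characterization of $\M{p,q}{}$ in disguise, and once one has $\|f\|_{\M{p,q}{}}\asymp\bigl(\sum_k\|\Box_kf\|_{L^p}^q\bigr)^{1/q}$, the embeddings follow directly by counting the $\approx 2^{jd}$ unit cubes meeting the annulus $|\xi|\sim 2^j$ and applying H\"older/Bernstein on the $\ell^p$--$\ell^q$ level --- which is how \cite{toft04} and \cite{baoxiang3} actually argue, and which makes the detour through the dilation theorem unnecessary. So: viable and arithmetically correct, but the one step you flag as ``the hardest part'' is the entire proof, and in its rigorous form it subsumes the rest of your argument.
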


The next results  improve those in \cite[Theorem 3.1]{kasso04}.
\begin{theorem}\label{teor31} Let $1\leq p\leq 2$.

\noindent (i)  If $s\geq d(1/p-1/p')$ and
$1\leq q\leq p$ then
$B^{p,q}_s\hookrightarrow \M{p}{}.
$

\noindent (ii) If $s> d(1/p-1/p')$ and $1\leq q\leq \infty$ then
$B^{p,q}_s\hookrightarrow \M{p}{}.
$

\end{theorem}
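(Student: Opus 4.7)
My plan is to reduce both parts to the diagonal case $M^p = M^{p,p}$ and then apply Theorem~\ref{incl} together with elementary inclusions among Besov spaces.

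First I will locate the point $(1/p, 1/p)$ in the index diagram of Section~\ref{prelim} for $1 \leq p \leq 2$. Since $1/p \geq 1/2 \geq 1/p'$ throughout this range, one verifies that $(1/p, 1/p) \in I_2^{*}$, because $\min(1/p, 1/2) = 1/2 \geq 1/p'$, and consequently $\mu_1(p,p) = 1/p - 1 = -1/p'$. This gives
\[
\nu_1(p,p) \;=\; \mu_1(p,p) + \tfrac{1}{p} \;=\; \tfrac{2}{p} - 1 \;=\; \tfrac{1}{p} - \tfrac{1}{p'}.
\]
Applying Theorem~\ref{incl}(i) with $q = p$ therefore yields the sharp diagonal embedding $B^{p,p}_s \hookrightarrow M^p$ whenever $s \geq d(1/p - 1/p')$.

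For part (i), I compose this diagonal embedding with the standard inclusion $B^{p,q}_s \hookrightarrow B^{p,p}_s$, valid for $1 \leq q \leq p$; this is an immediate consequence of $\ell^q(\mathbb{N}) \hookrightarrow \ell^p(\mathbb{N})$ applied to the Littlewood--Paley sequence defining the Besov norm. For part (ii), the strict inequality $s > d(1/p - 1/p')$ allows me to spend a little smoothness: pick any $s' \in [d(1/p - 1/p'), s)$. The classical Besov embedding $B^{p,q}_s \hookrightarrow B^{p,p}_{s'}$ holds for all $q \in [1,\infty]$ as soon as $s > s'$, and part (i) then gives $B^{p,p}_{s'} \hookrightarrow M^p$, completing the chain.

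Essentially all the work lies in Theorem~\ref{incl}; once the index calculation at the diagonal is carried out correctly, the rest is routine. The only point requiring minor care is to confirm the correct formula for $\mu_1(p,p)$: at the endpoint $p = 2$ the diagonal point also lies in $I_1^{*}$ and $I_3^{*}$, but all three defining formulas happen to agree there (they give $-1/2$), and for $1 \leq p < 2$ only $I_2^{*}$ contains $(1/p, 1/p)$, so there is no ambiguity in the value of $\nu_1(p,p)$. I do not foresee a real obstacle beyond this bookkeeping.
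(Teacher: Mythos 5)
Your proof is correct and follows essentially the same route as the paper: reduce to the diagonal embedding $B^{p,p}_s\hookrightarrow \M{p,p}{}$ via Theorem~\ref{incl} and then compose with the elementary Besov inclusions in the secondary index, trading a little smoothness in part (ii). Your index computation $\nu_1(p,p)=2/p-1=1/p-1/p'$ is in fact the correct one --- the paper's proof writes $\nu_1(p,p)=0$, which is a slip, though it does not affect the argument since the hypothesis $s\geq d(1/p-1/p')$ is exactly $s\geq d\nu_1(p,p)$.
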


\begin{proof} (i) For  $s\geq d(1/p-1/p')\geq \nu_{1}(p,p)=0$, Theorem \ref{incl}
says that $B^{p,p}_s \hookrightarrow \M{p,p}{}$.
However, the inclusion relations for Besov spaces give $B^{p,q}_s\hookrightarrow
B^{p,p}_s$, for $q\leq p$. Hence the result follows. \\

(ii) If $s> d(1/p-1/p')\geq 0$, and $q\leq p$, then this is exactly (i) above. If
$p\leq q$, then $B^{p,q}_{s}\hookrightarrow B^{p,p} \hookrightarrow \M{p}{}.$
\end{proof}

The next results  improve those in \cite[Theorem 3.2]{kasso04}.

\begin{theorem}\label{teor32}

\noindent (i) Let $1\leq p\leq 2$, $s>0$. Then
$B^{p,q}_s\hookrightarrow \M{p,p'}{},\quad  \mbox{for\,all\,}\,\,1\leq q\leq\infty.
$

\noindent(ii) If  $2\leq p\leq \infty$, $s> d(1/p'-1/p)$, then
$B^{p,q}_s\hookrightarrow \M{p,p'}{},\quad  \mbox{for\,all\,}\,\,1\leq q\leq\infty.
$

\end{theorem}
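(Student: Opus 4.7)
The plan is to reduce both statements to the key embedding $B^{p,p'}_{s_0}\hookrightarrow \M{p,p'}{}$, obtained by specializing Theorem \ref{incl}(i) to $q=p'$, and then to descend from $B^{p,q}_s$ down to $B^{p,p'}_{s_0}$ by means of the classical Besov inclusions. The first and only genuinely computational step is therefore to pin down the value of $\nu_1(p,p')$, which dictates the critical threshold $s_0 = d\nu_1(p,p')$.

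\textbf{Step 1: identify $\nu_1(p,p')$.} I will locate the point $(1/p,1/p')$ among the regions $I_1^*$, $I_2^*$, $I_3^*$. When $1\leq p\leq 2$ one has $1/p'\leq 1/2\leq 1/p$, so $\min(1/p,1/p')=1/p'=1/q$ and also $\min(1/q,1/2)=1/p'$ with $q=p'$; hence $(1/p,1/p')$ sits on the common boundary of $I_1^*$ and $I_2^*$, and both pieces of the definition return $\mu_1(p,p')=-1/p$. Thus $\nu_1(p,p')=-1/p+1/p=0$. When $2\leq p\leq\infty$ one instead has $\min(1/p',1/2)=1/2\geq 1/p$, so $(1/p,1/p')\in I_3^*$, giving $\mu_1(p,p')=-2/p+1/p'$ and hence $\nu_1(p,p')=1/p'-1/p$. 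In compact form, $d\nu_1(p,p')=\max\bigl(0,\,d(1/p'-1/p)\bigr)$, which is exactly the threshold appearing in the statement.

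\textbf{Step 2: endpoint embedding.} Feeding the values of $\nu_1(p,p')$ from Step 1 into Theorem \ref{incl}(i) with $q=p'$, I obtain $B^{p,p'}_{0}(\rd)\hookrightarrow \M{p,p'}{}(\rd)$ when $1\leq p\leq 2$, and $B^{p,p'}_{d(1/p'-1/p)}(\rd)\hookrightarrow \M{p,p'}{}(\rd)$ when $2\leq p\leq\infty$.

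\textbf{Step 3: descend via Besov inclusions.} Recall that $B^{p,q_1}_{s_1}\hookrightarrow B^{p,q_2}_{s_2}$ holds whenever either $s_1>s_2$ (for arbitrary $q_1,q_2\in[1,\infty]$) or $s_1=s_2$ with $q_1\leq q_2$. For part (i) the hypothesis $s>0$ therefore yields $B^{p,q}_s\hookrightarrow B^{p,p'}_{0}$ for every $q$, which composed with Step 2 gives (i). For part (ii) the strict inequality $s>d(1/p'-1/p)$ yields $B^{p,q}_s\hookrightarrow B^{p,p'}_{d(1/p'-1/p)}$ for every $q$, which composed with Step 2 gives (ii). The strictness of $s>d(1/p'-1/p)$ in the hypothesis (rather than $\geq$) is precisely what allows us to absorb an arbitrary Besov exponent $q$ without imposing $q\leq p'$.

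\textbf{Main obstacle.} The only non-routine part is the algebraic identification of $d\nu_1(p,p')$ with the thresholds $0$ and $d(1/p'-1/p)$ appearing in the statement, which requires the case analysis on $I_j^*$ performed in Step 1. Once this identity is in hand, the embedding becomes a formal consequence of Theorem \ref{incl} together with the scale of classical Besov inclusions, so no new hard analysis is involved.
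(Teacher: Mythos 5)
Your proposal is correct and follows essentially the same route as the paper: compute $\nu_1(p,p')$ (which is $0$ for $1\leq p\leq 2$ and $1/p'-1/p$ for $2\leq p\leq\infty$), apply Theorem \ref{incl}(i) with $q=p'$ to get the endpoint embedding $B^{p,p'}_{d\nu_1(p,p')}\hookrightarrow \M{p,p'}{}$, and then descend from $B^{p,q}_s$ using the classical Besov inclusions (the $q>p'$ case being handled by H\"older, exactly as the paper notes).
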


\begin{proof} (i) For  $1\leq p\leq 2$, $\nu_1(p,p')=0$ and using  Theorem
\ref{incl} we obtain $B^{p,p'} \hookrightarrow \M{p,p'}{}$. Since $B^{p,q}_s
\hookrightarrow B^{p,p'}$, for all $1\leq q\leq\infty$, $s>0$, the result
follows.\\
(ii) If $2\leq p\leq \infty$, $$\nu_1(p,p')=\frac1{p'}-\frac1p\leq\frac1{p'}.$$
Hence, if $s\geq  d(1/p'-1/p)$, Theorem \ref{incl} gives $B^{p,p'}_s \hookrightarrow
\M{p,p'}{}$. If $s> d(1/p'-1/p)$,
the inclusion relations for Besov spaces give $B^{p,q}_s \hookrightarrow
B^{p,p'}_{d(1/p'-1/p)}$. This is easy to see if $q\leq p'$. On the other hand if
$q>p'$ it follows by an application of H\"older's inequality for $\ell^p$ spaces. In
any case, this concludes the proof.
\end{proof}

\section{Acknowledgment} The authors would like to thank Fabio Nicola for helpful
discussions. K.~A.~Okoudjou would also like to acknowledge the partial support of the
Alexander von Humboldt foundation.

\end{document}